\documentclass[11pt,reqno]{amsart}
\usepackage{geometry}
\geometry{hmargin=2.5cm,vmargin=2.5cm}
\usepackage[english]{babel}
\usepackage{mathdots}
\usepackage{graphicx}
\usepackage{mathtools,amsfonts,amssymb,amsthm,epsfig,epstopdf,url,array,mathrsfs}
\usepackage[utf8]{inputenc}
\usepackage[T1]{fontenc}
\usepackage{caption}
\usepackage{wrapfig}
\usepackage{array}
\usepackage{multirow}
\usepackage{pifont}
\usepackage{xcolor}
\definecolor{PLBgras}{rgb}{0.06,0.42,0.60}
\usepackage[colorlinks=true,linkcolor=PLBgras, citecolor=PLBgras, bookmarkstype=toc]{hyperref}
\hypersetup{urlcolor=blue}
\usepackage{enumitem}
\usepackage{stmaryrd}
\usepackage{bbold}
\usepackage[colored]{shadethm} 
\usepackage{tikz}
\usepackage{pgf}
\usepackage{listings}
\usepackage{longtable}
\usepackage{manfnt}
\usepackage{pgfplots}
\usetikzlibrary{patterns}
\pgfplotsset{compat=1.14}
\usepackage[all]{xy}
\usepackage{epsfig}
\usepackage{cancel}
\usepackage{cases}
\usepackage{xspace}
\usepackage{xparse}
\usepackage{cleveref}

\definecolor{BB}{RGB}{162, 22, 22}
\definecolor{NS}{RGB}{12,133,23}
\definecolor{PLB}{rgb}{0.06,0.42,0.60}
\definecolor{PLBfonce}{rgb}{0.06,0.42,0.60}
\definecolor{PLBmoyen}{RGB}{176, 216, 232}
\definecolor{PLBpale}{rgb}{0.94,0.965,0.965}



\usepackage{framed}

\setlength{\parindent}{0pt} 
\sloppy

\theoremstyle{definition}
\newtheorem{deff}{Definition}
\newtheorem{assumption}{Assumption}

\theoremstyle{plain}
\newtheorem{prop}[deff]{Proposition}
\newtheorem{thrm}[deff]{Theorem}
\newtheorem{coro}[deff]{Corollary}
\newtheorem{lemma}[deff]{Lemma}
\newtheorem{procedure}[deff]{Method}

\theoremstyle{remark}

\newtheorem{rem}[deff]{Remark}

\definecolor{shadethmcolor}{rgb}{0.95,0.95,0.95}
\definecolor{shaderulecolor}{rgb}{0,0,0}
\setlength\shadeboxrule{0.5pt}
\setlength{\shadeboxsep}{8pt}
\setlength{\shadeleftshift}{-\shadeboxsep}
\addtolength{\shadeleftshift}{-\shadeboxrule}
\setlength{\shaderightshift}{3mm}
\newshadetheorem{theoreme}[deff]{Théorème} 
\newshadetheorem{pr}[deff]{Proposition} 
\newshadetheorem{lemme}[deff]{Lemme} 
\newshadetheorem{conj}[deff]{Conjecture}
\newshadetheorem{cor}[deff]{Corollaire} 
\newshadetheorem{defi}{Définition} 
\newshadetheorem{pb}{Problème} 
\newshadetheorem{qu}{Question}

\newcommand{\N}{\mathbb{N}}

\newcommand{\R}{\mathbb{R}}
\newcommand{\D}{\mathbb{D}}

\renewcommand{\S}{\mathbb S}
\newcommand{\Se}{\mathbb{S}_{\varepsilon}}

\newcommand{\C}{\mathbb{C}}

\renewcommand{\>}{\geqslant}
\newcommand{\<}{\leqslant}

\newcommand{\interval}[2]{\llbracket #1:#2 \rrbracket}

\newcommand{\eps}{\varepsilon}

\newcommand{\ind}{\text{Ind}}

\newcommand{\Bbord}{\mathcal B}
\newcommand{\Cconnect}{\mathcal C}
\newcommand{\Pbord}{\mathcal P}
\newcommand{\Uvec}[1]{\widetilde{\mathfrak{U}}_{#1}(z)}
\newcommand{\Uvecmoins}{\mathfrak{U}_-}
\newcommand{\Uvecplus}{\mathfrak{U}_+}

\newcommand{\fun}[5]{#1 : #2 \in #3 \mapsto  #4\in #5}

\newcommand{\egdef}{\overset{def}{=}}

 \newcommand{\DKLindep}{\Delta}
 \newcommand{\DKLindepinvers}{\widetilde{\Delta}}

\newcommand{\PP}{P}
\newcommand{\RR}{R}
\newcommand{\QQ}{Q}
\newcommand{\CC}{C}
\newcommand{\EEs}{\mathbb E_s}
\newcommand{\EEu}{\mathbb E_u}

\newcommand{\U}{\mathcal U}
\newcommand{\courbe}{\DKLindep(\S)}
\newcommand{\Es}{\mathcal E^s(z)}
\NewDocumentCommand{\DKL}{s}{    
\IfBooleanTF{#1}		            
 {\Delta_{0}}{\Delta_{\mathrm{KL}}}}   

\newcommand{\coefpol}{\sigma}

 \renewcommand{\H}{K}
 \newcommand{\HH}{\varphi}

 \newcommand{\mult}{\beta} 
 \newcommand{\nbrmult}{M}

 \newcommand{\borneinfintervalspatial}{x_{\sigma}}

\newcommand{\pe}{\vspace{10pt}}

\newcommand{\symbole}{\gamma}
\newcommand{\dt}{{\Delta t}} 			
\newcommand{\dx}{{\Delta x}}			

\newcommand{\Ind}{\mathrm{Ind}}


\sloppy
\definecolor{codegreen}{rgb}{0,0.6,0}
\definecolor{codegray}{rgb}{0.5,0.5,0.5}
\definecolor{codepurple}{rgb}{0.58,0,0.82}
\definecolor{backcolour}{rgb}{0.95,0.95,0.92}
\lstdefinestyle{mystyle}{
    backgroundcolor=\color{backcolour},   
    commentstyle=\itshape\color{codegreen},
    keywordstyle=\color{PLBfonce},
    numberstyle=\tiny\color{codegray},
    stringstyle=\color{codepurple},
    basicstyle=\ttfamily\footnotesize,
    breakatwhitespace=false,         
    breaklines=true,                 
    captionpos=b,                    
    keepspaces=true,                 
    numbers=left,                    
    numbersep=5pt,                  
    showspaces=false,                
    showstringspaces=false,
    showtabs=true,                  
    tabsize=2
}

\lstset{style=mystyle}

\setitemize[0]{label = $\bullet$}

\setcounter{MaxMatrixCols}{15}
\begin{document}


\title[Stability of finite difference schemes for the hyperbolic IBVP]{Stability of finite difference schemes for the hyperbolic initial boundary value problem by winding number computations}

\author{Benjamin~Boutin \and Pierre~Le~Barbenchon \and Nicolas~Seguin}
\address{Univ Rennes, CNRS, IRMAR - UMR 6625\\F-35000 Rennes, France.}
\email{benjamin.boutin@univ-rennes1.fr}
\email{pierre.lebarbenchon@univ-rennes1.fr}
\address{IMAG, Inria d'Université Côte d'Azur, Univ. Montpellier, CNRS, Montpellier, France}
\email{nicolas.seguin@inria.fr}
\thanks{This work has been partially supported by ANR project NABUCO, ANR-17-CE40-0025 and by Centre Henri Lebesgue, program ANR-11-LABX-0020-0.}
\date{\today}

\begin{abstract}
    In this paper, we present a numerical strategy to check the strong stability (or GKS-stability) of one-step explicit finite difference schemes for the one-dimensional advection equation with an inflow boundary condition. 
    The strong stability is studied using the Kreiss-Lopatinskii theory. We introduce a new tool, the intrinsic Kreiss-Lopatinskii determinant, which possesses the same regularity as the vector bundle of discrete stable solutions. By applying standard results of complex analysis to this determinant, we are able to relate the strong stability of numerical schemes to the computation of a winding number, which is robust and cheap. 
    The study is illustrated with the O3 scheme and the fifth-order Lax-Wendroff (LW5) scheme together with a reconstruction procedure at the boundary.
\end{abstract}

\maketitle

\noindent {\small {\bf AMS classification:} 65M12, 65M06, 30E10}

\noindent {\small {\bf Keywords:} IBVP, Kreiss-Lopatinskii determinant, GKS-stability, finite difference methods, winding number}

\medskip




\makeatletter
\setlength{\parindent}{15\p@} 
\makeatother

\section{Introduction}



\subsection{Motivations and assumptions}

The purpose of this work is to establish an efficient numerical strategy to determine whether a given finite difference method on the half line is stable or not.
We work on an approximation of the rightgoing linear transport equation set on the positive real axis:
\begin{align}\label{eq2:advection}
        \begin{cases}
         \partial_t u + a \partial_x u = 0, &  t\>0, x \>0,\\
        u(t,0) = g(t), & t\>0, \\
         u(0,x) = f(x), & x\>0,
        \end{cases}
    \end{align}
    where $u(t,x)\in\R$ is the unknown, $f$ an initial datum at time $t=0$, $g$ is a prescribed physical boundary datum at the point $x=0$ which corresponds to the inflow boundary because the velocity $a$ is assumed to be positive $a>0$.


\medskip

At the discrete level, we consider explicit one-step finite difference methods of the form
\begin{equation}\label{eq2:stdscheme}
    U_{j}^{n+1} = \displaystyle \sum_{k = -r}^p a_k U_{j+k}^n,
\end{equation}
with integers $r,p\> 1$ and $a_p, a_{-r}$ non zero. The case where $p=0$ or $r=0$ will be discussed in Section~\ref{sec2:casep=0}. Here, the unknown of the scheme $U_j^n$ is expected to approximate the quantity~$u(n\dt, j\dx)$. The time step $\dt>0$ and the space step $\dx>0$ are usually chosen with respect to some CFL condition $\lambda = a\dt/\dx \<\lambda_{\textsf{CFL}}$ discussed later on.

\medskip

Throughout this paper we denote $\S= \{z\in \C, |z|=1\}$ the unit circle, $\D= \{z\in \C, |z|<1\}$ the open unit disk, $\mathcal U= \{z\in \C, |z|>1\}$ the associated exterior domain and $\overline{\mathcal U} = \{z\in \C, |z|\> 1\}$ its closure. For $n<m$, the notation $\interval{n}{m}$ is for the set $\{k\in \N, n\<k\<m\}$.

\medskip

As a central idea in numerical analysis, the Lax equivalence theorem \cite{Lax56} asserts that a consistent scheme is convergent if and only if it is stable. Therefore, all along the paper, only consistent numerical schemes are considered and the discussion concentrates only on their stability issues. The Cauchy-stability for the space-periodic problem is handled with the Fourier symbolic analysis, the so-called Von-Neumann stability analysis (see \cite{Courant28} and \cite{Crank47}) and makes use of the symbol~$\gamma$.
The \emph{symbol} associated with the scheme \eqref{eq2:stdscheme} is defined, for $\xi \in \R$, by
\begin{equation}\label{eq2:symbol}
    \symbole(\xi) =\sum_{k = -r}^p a_k e^{ik\xi}.
\end{equation}

\begin{assumption}\label{assumption2:cauchystab}
    The scheme~\eqref{eq2:stdscheme} is \emph{Cauchy-stable}, meaning that the symbol $\gamma$ satisfies
    $|\symbole(\xi)|\<1$ for all $\xi\in \R$.
    \end{assumption}
When dealing with discrete schemes set over the full line $j\in\mathbb{Z}$, the algebraic characterization of the Cauchy-stability follows classically from the Fourier analysis but in the scalar case, it reduces to a geometric property concerning the \emph{symbol curve} $\Gamma$ which is a closed complex parametrized curve defined by
\[
        \Gamma = \{\theta\in[0,2\pi]\mapsto \gamma(\theta)\}.
    \]
%
%
%
This curve enables a geometric interpretation of the Cauchy-stability assumption~\ref{assumption2:cauchystab} which can be seen as the inclusion $\Gamma\subset\overline{\D}$.
The stability condition~\ref{assumption2:cauchystab} can be easily illustrated graphically in the complex plane. In some sense, our goal is to extend this kind of graphical study when including the numerical boundary conditions.

\medskip

For solving the Initial Boundary Value Problem (IBVP)~\eqref{eq2:advection} with the discrete scheme~\eqref{eq2:stdscheme}, $r$ additional ghost points are needed to take into account the left boundary condition and to fully define the discrete approximation. 
We assume that the values at these ghost points are obtained from a linear combination of the 
first values of the solution close to the boundary and at the same time step. More clearly, the considered numerical schemes reads
\begin{numcases}{} 
    U_{j}^{n+1} = \sum_{k = -r}^p a_k U_{k+j}^n, & $j\in \N,\ n\in \N$, \label{eq2:eqprincip}\\
    U_j^{n} = \sum_{k = 0}^{m-1} b_{j,k} U_k^n + g_j^n,  & $j\in\interval{-r}{-1},\ n\in \N$, \label{eq2:eqbord}\\
    U_j^0 = f_j, & $j\in \N$,\label{eq2:eqinit}
\end{numcases}
where the integer $m$ satisfies $p+r\<m$, $(f_j)_j$ are approximations of the initial condition $f$ and $(g_j^n)_{n,j}$ are numerical data related to the boundary datum $g$ and possibly its derivatives (see for instance the example in Section~\ref{sec2:boundarycondition}). The assumption $p+r\<m$ 
is not restrictive since some of the coefficients $b_{j,k}$ are possibly zero.



In order to define the stability on $\ell^2(\N)$ and for the sake of convenience in the Kreiss-Lopatinskii determinant formulation (see Definition~\ref{def2:detKL}), the explicit use of the $r$ ghost points~$U_j^n$, for $j\in\interval{-r}{-1}$, can be avoided by substituting the $r$ boundary condition~\eqref{eq2:eqbord} into 
the recurrence formula \eqref{eq2:eqprincip} for $j \in \interval{0}{r-1}$. After straightforward calculations, the boundary part reads also under the form  
\begin{equation}\label{eq2:equationBordQuasiToep}\mathfrak U^{n+1}_{r} = \Bbord \mathfrak U^n_{m} + \mathcal G^n\end{equation}
where we denote 
\[\mathfrak U^{n+1}_{r} = \begin{pmatrix} U_{0}^{n+1} \\ \vdots \\ U_{r-1}^{n+1}\end{pmatrix},~\mathfrak U^n_{m} =  \begin{pmatrix} U_{0}^{n} \\ \vdots \\ U_{m-1}^{n}\end{pmatrix},~\mathcal G^n = \begin{pmatrix} 
    a_{-r} & \cdots  &  a_{-1} \\
    & \ddots & \vdots \\
    0 & & a_{-r}
\end{pmatrix} \begin{pmatrix} g_{-r}^n \\ \vdots \\ g_{-1}^n\end{pmatrix}\in \mathcal M_r(\C).\] Here, the matrix $\Bbord\in \mathcal M_{r,m}(\C)$ encodes the boundary treatment in another way. It corresponds to the boundary part of the quasi-Toeplitz matrix form of the scheme used by Beam and Warming \cite{Beam93}. In the detail, the explicit relationship between $\Bbord$ and $B$ is as follows:
\begin{equation}\label{eq2:bordB}
    \Bbord = 
    \begin{pmatrix} 
        a_{-r} & \cdots  &  a_{-1} \\
        & \ddots & \vdots \\
        0 & & a_{-r}
    \end{pmatrix}B + 
    \begin{pmatrix} 
        a_0 & \cdots &  a_p & 0 & \cdots & \cdots &  \cdots &  0 \\
        \vdots &\ddots  &  & \ddots &  \ddots &  & &\vdots  \\
        a_{-r+1} & \cdots & a_0 &\cdots  & a_p &  0 & \cdots & 0 \\ 
    \end{pmatrix} \in \mathcal M_{r,m}(\C)
\end{equation}
with the notation 
$$B =\begin{pmatrix} 
    b_{-r,0} & \cdots &\cdots & b_{-r,m-1} \\
     \vdots  &  & & \vdots \\
    b_{-1,0} & \cdots &\cdots & b_{-1,m-1} \end{pmatrix}\in \mathcal M_{r,m}(\C).$$
%
For example, for the very naive scheme $U_{j}^{n+1}  = \frac{U_{j-1}^n + U_{j+1}^n}{2}$ and the boundary condition $U_{-1}^n = \frac{U_0^n + U_1^n}{2}$, we obtain
$B = \begin{pmatrix} \frac 1 2 & \frac 1 2\end{pmatrix} \text{ and } \Bbord = \begin{pmatrix} \frac 1 4 & \frac 3 4\end{pmatrix}$.

This class of boundary conditions, \eqref{eq2:eqbord} or~\eqref{eq2:equationBordQuasiToep}, encompasses the Dirichlet and Neumann extrapolation procedures, for example, refer to the work of Goldberg~\cite{Goldberg77}. This class also takes into account the more general simplified inverse Lax-Wendroff procedure analyzed by Vilar and Shu~\cite{Vilar15} in the framework of central compact schemes, and Li, Shu and Zhang for the advection equation~\cite{Li16} and for diffusion equations~\cite{Li17}.  
We will focus on the 
so-called reconstruction technique for the boundary condition, which enables to deal with a boundary which is not superposed with a grid point (presented by Dakin, Després and Jaouen~\cite{Dakin18} and also in Section~\ref{sec2:boundarycondition}) in our numerical examples. Other treatments at the boundary  exist, as for example absorbing boundary conditions~\cite{Engquist77} and~\cite{Ehrhardt10}, or transparent boundary conditions~\cite{Arnold03} and~\cite{Coulombel19}, however, in general, they do not enter the present framework.

\medskip



For finite difference schemes applied to discrete IBVP's, the stability study is a principal issue and is the subject of different approaches.
For example, Beam and Warming~\cite{Beam93} study the spectral properties of the Toeplitz or quasi-Toeplitz representation of the scheme.
In the same spirit, the computation of the spectral radius of the truncated (i.e. finite dimensional) quasi-Toeplitz matrix may provide significant information for the power boundedness of the method. This is the method used for example by Dakin, Després and Jaouen~\cite{Dakin18}. This strategy is sometimes called \emph{eigenvalue spectrum visualization method}, especially by Li, Shu and Zhang~\cite{Li22, Li16,Li17}. In Section~\ref{sec2:numerical}, we will compare this latter approach with our own strategy presented hereafter for the O3 scheme in the case of reconstruction boundary conditions.
Our strategy is based on the so-called GKS-stability theory introduced by Gustafsson, Kreiss and Sundström \cite{Gustafsson72} which handles the discrete IBVP \eqref{eq2:eqprincip}-\eqref{eq2:eqbord}-\eqref{eq2:eqinit} with a zero initial data. The reader can  refer to the work by Wu \cite{Wu95} and Coulombel \cite{Coulombel11} for more recent developments on semigroup estimates in order to deduce the stability of the discrete IBVP \eqref{eq2:eqprincip}-\eqref{eq2:eqbord}-\eqref{eq2:eqinit} with non zero initial data from the GKS-stability.
The notion of GKS-stability (or also called strong stability) for the boundary problem makes use of the following discrete norms:

\begin{equation*}
    \|U_j\|_{\dt}^2 = \sum_{n=0}^{+\infty}\dt |U_j^n|^2 \text{\quad and \quad}\|U\|_{\dx,\dt}^2 = \sum_{n=0}^{+\infty} \sum_{j = 0}^{+\infty}\dt\dx |U_j^n|^2.
\end{equation*}
%
The so-called strong stability, or GKS-stability, is defined by:
\begin{deff}[Strong stability]\label{def2:defstabilite}
    The scheme \eqref{eq2:eqprincip}-\eqref{eq2:eqbord}-\eqref{eq2:eqinit} is strongly stable if, for $(f_j)=0$, there exist $C>0$ and $\alpha_0$, such that for all $\alpha>\alpha_0$, for all boundary data $(g^n_j)$, for all $\dx>0$, for all $n\in \N$, the solution satisfies
    \begin{equation}\label{eq2:stability}\sum_{j = -r}^{-1} \|e^{-\alpha n \dt} U_{j}\|_{\dt}^2  + \left (\dfrac{\alpha- \alpha_0}{\alpha\dt +1}\right )\|e^{-\alpha n \dt} U\|_{\dx,\dt}^2 \< C  \sum_{j=-r}^{-1} \|e^{-\alpha n \dt} g_j\|_\dt^2.
    \end{equation}
\end{deff}
%
%
%
%
%
We warn the reader that $\|e^{-\alpha n \dt} U_{j}\|^2_{\dt}$ 
is here an abuse of notation to describe $\sum_{n=0}^{+\infty}\dt e^{-2\alpha n \dt} |U_j^n|^2$ and similarly for $\|e^{-\alpha n \dt} U\|_{\dx,\dt}^2$.
%
%
This stability definition admits a similar but continuous form for the solutions to continuous hyperbolic PDE’s \cite{Benzoni06}. Namely, it provides some a priori estimates that are useful for a general analysis of such problems.

\medskip


The following Kreiss theorem \cite{Kreiss68} expresses a necessary and sufficient condition for the strong stability.
We provide hereafter a condensed formulation of this theorem, obtained from \cite[Thm~5.1]{Gustafsson72} combined with \cite[Lem~13.1.4]{Gustafsson13} or with \cite[Def~2.23]{Gustafsson08}.

\begin{thrm}[Kreiss]\label{th2:thrmKreiss}
    The following statements are equivalent:
    \begin{enumerate}[label = (\roman*)]
        \item The scheme \eqref{eq2:eqprincip}-\eqref{eq2:eqbord}-\eqref{eq2:eqinit} is strongly stable in the sense of Definition~\ref{def2:defstabilite}.
        \item The Uniform Kreiss-Lopatinskii Condition is satisfied.
    \end{enumerate}
\end{thrm}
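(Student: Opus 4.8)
The plan is to pass through the Laplace transform in time (the $z$-transform for discrete problems), which reduces the time-dependent estimate of Definition~\ref{def2:defstabilite} to a family of stationary resolvent problems indexed by a dual frequency variable $z$, and then to recognize that the Uniform Kreiss-Lopatinskii Condition is exactly the statement that these resolvent problems are solvable with a bound that is uniform up to the boundary circle $|z|=1$. So the proof splits into two implications: (ii)$\,\Rightarrow\,$(i), sufficiency of the condition, and (i)$\,\Rightarrow\,$(ii), its necessity.

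First I would introduce the generating function $\hat U_j(z) = \sum_{n\geq 0} z^{-n} U_j^n$. The exponential weight $e^{-\alpha n \dt}$ appearing in \eqref{eq2:stability} corresponds to evaluating $\hat U_j$ on the circle $|z| = e^{\alpha \dt}$, and by Parseval's identity the discrete norms $\|\cdot\|_\dt$ and $\|\cdot\|_{\dx,\dt}$ become, up to the weights, $L^2$ integrals over that circle of the transformed quantities. Applying the transform to \eqref{eq2:eqprincip}--\eqref{eq2:eqbord} turns the recurrence in $n$ into a scalar resolvent equation in $j$, namely $z\,\hat U_j = \sum_k a_k \hat U_{j+k}$ on $\N$, together with the transformed boundary relation carried by the matrix $\Bbord$ acting on the first components. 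The stability estimate is then equivalent to a resolvent estimate: for every $z$ with $|z|>1$, the resolvent equation with the prescribed boundary data is uniquely solvable in $\ell^2(\N)$, with the solution controlled by the boundary data and the bound uniform as $|z|\to 1^+$.

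Next I would analyze the resolvent equation through its characteristic equation $z = \symbole(\kappa)$, where $\kappa$ replaces $e^{i\xi}$; after clearing by $\kappa^r$ this is a polynomial of degree $p+r$ in $\kappa$ whose roots generate the spatial modes $\kappa^j$. The Cauchy-stability Assumption~\ref{assumption2:cauchystab} guarantees that for $|z|>1$ no root lies on $\S$, so the roots split into a fixed number of stable roots inside $\D$ (giving genuine $\ell^2(\N)$ modes) and unstable ones; the stable modes span the subspace $\mc E^s(z)$ of admissible decaying solutions. Imposing the $r$ boundary relations on this $r$-dimensional subspace produces the Kreiss-Lopatinskii determinant of Definition~\ref{def2:detKL}, and unique solvability with a controlled bound is equivalent to this determinant being bounded away from zero. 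Propagating this nonvanishing from each fixed $z$ with $|z|>1$ to a bound uniform as $|z|\to 1^+$ is precisely the Uniform Kreiss-Lopatinskii Condition, which, combined with the norm matching of the previous step, yields (i)$\,\Leftrightarrow\,$(ii).

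The main obstacle is the limit $|z|\to 1^+$. As $z$ approaches $\S$, stable roots $\kappa$ may migrate onto the unit circle (glancing or neutral modes), so the splitting defining $\mc E^s(z)$ degenerates and a naive mode-by-mode bound blows up; the determinant must be extended continuously to $|z|=1$ by a limiting-absorption argument. Converting the pointwise nonvanishing condition into a genuine uniform energy estimate — the direction (ii)$\,\Rightarrow\,$(i) — is exactly the content of Kreiss' construction of a symmetrizer that stays smooth in the frequency parameter across these degenerate frequencies, and this is the technical heart of the argument and the reason the statement is assembled from \cite{Gustafsson72, Gustafsson13, Gustafsson08} rather than settled by a short direct computation. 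For the converse (i)$\,\Rightarrow\,$(ii) I would argue by contradiction: from a sequence $z_k \to z_0\in\S$ along which the determinant vanishes, one builds quasi-modes supported near the boundary that satisfy the scheme with negligible boundary data yet carry order-one energy, thereby violating \eqref{eq2:stability}.
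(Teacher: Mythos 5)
The paper does not prove this theorem: it is imported verbatim from the literature, assembled from \cite[Thm~5.1]{Gustafsson72} together with \cite[Lem~13.1.4]{Gustafsson13} or \cite[Def~2.23]{Gustafsson08}, so there is no in-paper proof to compare against. Your outline is a faithful description of the standard GKS argument that those references carry out: $\mathcal Z$-transform with Parseval on the circle $|z|=e^{\alpha\dt}$, reduction to a resolvent problem, splitting of the characteristic roots via Lemma~\ref{lem2:hersh}, identification of solvability with the nonvanishing of $\DKL$, symmetrizer for sufficiency, and unstable or quasi-modes for necessity. So the approach is the right one, and you correctly identify where the difficulty sits.

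That said, as a proof the proposal has real gaps rather than merely compressed steps. First, the two hard ingredients are named but not supplied: the continuous extension of $\Es$ and of $\DKL$ to $|z|=1$ at glancing frequencies (which this paper itself only gets by citing \cite[Thm 4.3]{Coulombel11} as Theorem~\ref{th2:CoulombelHolo}), and the construction of the K-symmetrizer that converts pointwise nonvanishing into the uniform estimate. Without these, (ii)$\Rightarrow$(i) is a statement of intent, not an argument. Second, your Parseval reduction only accounts for the boundary-trace terms of \eqref{eq2:stability}; the interior term weighted by $(\alpha-\alpha_0)/(\alpha\dt+1)$ does not follow from solvability of the resolvent problem alone and is precisely what the symmetrizer energy identity produces, so the claimed ``norm matching'' is incomplete. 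Third, in the necessity direction you only treat the case of a zero reached in the limit $z_k\to z_0\in\S$; the case of a genuine zero of $\DKL$ in $\U$ (a Godunov--Ryabenkii mode, which gives exponential blow-up rather than a quasi-mode) must be handled separately, and is in fact the easier half that your contradiction scheme skips. None of this makes the route wrong --- it is the route the cited references take --- but it does mean the proposal is a roadmap of their proof rather than a self-contained replacement for the citation.
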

The Uniform Kreiss-Lopatinskii Condition corresponds to the absence of zeros for the so-called Kreiss-Lopatinskii determinant $\DKL$ that we present here by this informal definition:
\begin{equation}\label{eq2:definformelDKL}
    \DKL(z) = \det (\mathfrak B e_1(z), \dots, \mathfrak B e_r(z))
\end{equation}
where $(e_1(z), \dots, e_r(z))$ is an explicit basis of the linear space of the $\ell^2(\N)$-stable solutions of the $\mathcal Z$-transform of the interior equation \eqref{eq2:eqprincip} and $\mathfrak B$ is an encoding of the $\mathcal Z$-transform of the boundary equation \eqref{eq2:eqbord}. 
For a proper definition of this determinant, the reader can look at Definition~\ref{def2:detKL} or the book by Gustafsson, Kreiss and Oliger~\cite{Gustafsson13}. 
Before going on, let us provide some comments to a particular case we already studied.


\subsection{The case of totally upwind schemes and summary of \cite{Boutin22}}\label{sec2:casep=0}

The present article is a non trivial extension of our previous work \cite{Boutin22} that deals with the restricted case of totally upwind schemes.
Totally upwind schemes are schemes of the form \eqref{eq2:stdscheme} with $p=0$ if $a>0$ or $r=0$ if $a<0$. Without loss of generality, we restrict here the discussion to the case $p=0$ since flipping the indices may turn a case to the other. 
In this section, we summarize the result of \cite{Boutin22} and introduce the novelty of the present work.
The first step of the analysis conducted in \cite{Boutin22} is based on the introduction of the intrinsic Kreiss-Lopatinskii determinant:
\begin{equation}\label{eq2:defDKLindep}\DKLindep(z) = \dfrac{\det (\mathfrak B e_1(z), \dots, \mathfrak B e_r(z))}{\det (e_1(z),\dots, e_r(z))}\end{equation}
using the same informal notation as in \eqref{eq2:definformelDKL}. 
Under appropriate assumptions, an explicit formula for the intrinsic Kreiss-Lopatinskii determinant is obtained:
\begin{equation}\label{eq2:explicitformula}\forall |z|\>1,\quad \DKLindep(z) = (-1)^{r(m-r)}\det C(z)\left (\dfrac{a_{-r}}{a_0 - z}\right )^{m-r}\end{equation}
where $\det C(z)$ is an computable polynomial in $z$ depending only on the coefficients $(a_j)_{j=-r}^0$ and on~$\mathfrak B$. Thanks to this result, we prove that $\DKLindep$ is holomorphic on $\overline{\U}$. Note that this property may be wrong as long as the standard Kreiss-Lopatinskii determinant is concerned.

Applying the residue theorem to $\DKLindep$, we develop a numerical strategy to count the number of zeros of the Kreiss-Lopatinskii determinant in $\U$. By Theorem~\ref{th2:thrmKreiss} (Kreiss), we conclude that if  $\Ind_{\DKLindep(\S)}(0) < r$ then the scheme is not stable
where $\Ind_{\DKLindep(\S)}(0)$ is the notation for the winding number of $0$ with respect to the Kreiss-Lopatinskii curve $\DKLindep(\S)$.
This result allows us to establish an efficient and practical method (see Method 19 of \cite{Boutin22} or Method~\ref{proc2:numericalprocedure} of the present paper) to study the stability of a scheme with boundary. It provides sharp results for the solution $(U_j^n)_{n\in \N} \in \ell^2(\N)$ to the problem set on the half line $\N$. In particular and contrary to numerical investigations of stability which are based on the computation of the spectral radius, no arbitrary truncation of (quasi-)Toeplitz matrices is needed. In return, a problem set on a bounded space domain needs, for a whole convergence study, superposition techniques for truncated data, as used in~\cite{Coulombel19} and~\cite{Boutin19}. This feature restricts mainly the study to explicit scheme.
\medskip

In the present article, we extend to $p\>1$ the connection between the winding number of $0$ and the stability of the scheme. Indeed, even if we do not have an explicit formula as~\eqref{eq2:explicitformula}, the holomorphic property of the intrinsic Kreiss-Lopatinskii determinant is sufficient to have the same efficient method to study the stability. 
This approach, using the winding number, is robust since instead of finding zeros of an algebraic curve, it only requires the computation of a winding number, which is an integer, to count the number of zeros. 
One can mention the work of Thuné~\cite{Thune86} who develops a numerical method to check the GKS-stability. He looks for the precise location of the zeros of the Kreiss-Lopatinskii determinant approximating the roots of some parameterized characteristic polynomial equations which is significantly different with our work.


\subsection{Outline of the paper}
After constructing the intrinsic Kreiss-Lopatinskii determinant in Section~\ref{sec2:detKL} and Section~\ref{sec2:proof}, we see that, in such a general case, the lack of an explicit formula for $\DKLindep(z)$ does not preclude holomorphic properties (see Theorem~\ref{thrm2:mainholocont}). From there, we obtain the following stability criterion: if $\Ind_{\DKLindep(\S)}(0) < r$ then the scheme is not stable (see Corollary~\ref{thrm2:nbrzerodet}). We prove these results by the use of Hermite interpolation and residue theorem. 
To compute the Kreiss-Lopatinskii determinant numerically, in Section~\ref{sec2:numerical}, we use a an easy-to-use formulation of it which is, in some sense, close to the explicit formulation~\eqref{eq2:explicitformula}. Moreover, Section~\ref{sec2:numerical} gathers the numerical procedure to draw the Kreiss-Lopatinskii curve, several examples, and numerical experiments for illustrating the efficiency of the proposed strategy.

\section{Kreiss-Lopatinskii determinants}\label{sec2:detKL}

In this section, we introduce the Kreiss-Lopatinskii determinant, a usual tool to check the Uniform Kreiss-Lopatinskii Condition. Then we define the intrinsic Kreiss-Lopatinskii determinant, namely a reshaping of the previous one, which is more convenient in practice and has better properties than the classical Kreiss-Lopatinskii determinant: holomorphicity, continuity, independence on the basis\dots


\subsection{Stable subspace $\Es$ and matrix representation}

First, we study the solutions to the interior equation:
\begin{equation}\label{eq2:eqprincip2}
U_{j}^{n+1} = \sum_{k = -r}^p a_k U_{k+j}^n,\  j\in \N,\ n\in \N.
\end{equation}
To study this equation, the $\mathcal Z$-transform (see \cite[Lesson 40]{Gasquet13}) is applied. This transformation is defined for $(x_n)_{n\in \N}\in\ell^2(\N)$ such that $x_0 = 0$ and $z\in\U$ by $\widetilde{x}(z) = \sum_{n\>0} z^{-n}x_n$. The previous equation then reads
\begin{equation}\label{eq2:Ztransform}
    z\widetilde{U}_j(z) = \sum_{k = -r}^p a_k\widetilde{U}_{j+k}(z),\ j\in\N,\ z\in\U.
\end{equation}
%
%
To solve the linear recurrence equation~\eqref{eq2:Ztransform}, let us introduce the following characteristic equation where $z$ plays the role of a parameter and $\kappa$ is the indeterminate:
\begin{equation}\label{eq2:eqcharac}
    z\kappa^r = \sum_{k = -r}^p a_k \kappa^{r+k}.
\end{equation}
This equation is nothing but the discrete dispersion relation of the finite difference scheme~\eqref{eq2:eqprincip2}, with frequency parameter $\kappa$ in space and $z$ in time. It is formally obtained 
by looking for solutions to the interior equation \eqref{eq2:eqprincip2} having the form $U_j^n = z^n \kappa^j$.

\medskip

In the spirit of a classic result by Hersh~\cite{Hersh63}, the following lemma provides a property of separation for the roots with respect to the unit circle.
\begin{lemma}[Hersh]\label{lem2:hersh}
    Assume \ref{assumption2:cauchystab}. For $z$ in the unbounded connected component of $\C\setminus \Gamma$,
    \begin{enumerate}
        \item  there is no root of the characteristic equation \eqref{eq2:eqcharac} on $\S$,
        \item there are $r$ roots (with multiplicity) of the characteristic equation \eqref{eq2:eqcharac} in $\D$ and $p$ roots (with multiplicity) of the characteristic equation \eqref{eq2:eqcharac} in $\U$.
    \end{enumerate}
\end{lemma}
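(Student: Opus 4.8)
The plan is to recast the characteristic equation \eqref{eq2:eqcharac} as a polynomial root-counting problem and apply the argument principle, exploiting a factorization on the unit circle that ties the roots directly to the symbol curve $\Gamma$. First I would set
\[
P(\kappa) = \sum_{k=-r}^{p} a_k \kappa^{r+k} - z\kappa^r,
\]
so that \eqref{eq2:eqcharac} reads $P(\kappa)=0$. Since $a_p\neq 0$ and $a_{-r}\neq 0$, this is a polynomial of degree $r+p$ whose constant term is $a_{-r}\neq 0$; hence $0$ is not a root and $P$ has exactly $r+p$ roots in $\C\setminus\{0\}$, counted with multiplicity.

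The crux is the observation that on $\S$, writing $\kappa=e^{i\xi}$,
\[
P(e^{i\xi}) = e^{ir\xi}\Big(\sum_{k=-r}^{p} a_k e^{ik\xi} - z\Big) = e^{ir\xi}\big(\gamma(\xi)-z\big).
\]
From this, statement (1) is immediate: a root on $\S$ would force $\gamma(\xi)=z$ for some $\xi$, that is $z\in\Gamma$, contradicting that $z$ lies in the unbounded connected component of $\C\setminus\Gamma$.

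For statement (2), I would count the roots of $P$ in $\D$ with the argument principle. Since $P$ has no zero on $\S$ by (1), the number of roots in $\D$ equals the winding number about $0$ of the closed curve $\xi\in[0,2\pi]\mapsto P(e^{i\xi})$. Using the factorization above and the additivity of the winding number under products, this number equals the winding of $\xi\mapsto e^{ir\xi}$ about $0$, namely $r$, plus the winding of $\xi\mapsto \gamma(\xi)-z$ about $0$, which is exactly $\Ind_{\Gamma}(z)$. Because $z$ belongs to the unbounded component of $\C\setminus\Gamma$, one has $\Ind_{\Gamma}(z)=0$, so $P$ has precisely $r$ roots in $\D$. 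Combined with (1) and the total count $r+p$, exactly $p$ roots remain in $\U$.

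The only delicate point is recognizing the factorization $P(e^{i\xi})=e^{ir\xi}(\gamma(\xi)-z)$, which converts the root count into a winding-number statement about the symbol curve; once this is available, everything reduces to the argument principle and the vanishing of the winding number in the unbounded component. I would also remark that the Cauchy-stability hypothesis \ref{assumption2:cauchystab}, i.e. $\Gamma\subset\overline{\D}$, is what ensures that the region $\U$ on which the $\mathcal Z$-transform is defined actually lies in this unbounded component, so that the lemma applies there; it plays no further role in the counting itself.
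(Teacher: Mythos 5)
Your proof is correct. Part (1) is essentially the paper's argument (a root $e^{i\xi}\in\S$ forces $z=\gamma(\xi)\in\Gamma$), but for part (2) you take a genuinely different route. The paper first observes that the number of roots in $\D$ is locally constant, hence constant, on the unbounded connected component of $\C\setminus\Gamma$ (continuity of roots plus the absence of roots on $\S$), and then evaluates that constant at a single large $|z|$ by Rouch\'e's theorem, comparing $P_z$ with $\kappa^r - a_{-r}/z$. You instead apply the argument principle directly at each admissible $z$: the factorization $P(e^{i\xi})=e^{ir\xi}(\gamma(\xi)-z)$ and additivity of the winding number give the count $r+\Ind_{\Gamma}(z)=r+0=r$ in one step. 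Your version is more direct and self-contained --- it needs neither the homotopy within the connected component nor a separate computation at a reference point, and it makes transparent exactly where the hypothesis ``$z$ in the unbounded component'' enters (through $\Ind_{\Gamma}(z)=0$); it is also thematically close to the winding-number philosophy of the rest of the paper. The paper's version, in exchange, isolates a reusable deformation argument that the authors invoke again later (e.g.\ in the proof of Proposition~\ref{pr2:borneDKL}, where the Rouch\'e comparison at large $|z|$ is reused to bound the roots). Your closing remark on the role of Assumption~\ref{assumption2:cauchystab} is accurate and matches the remark following the lemma in the paper.
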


\begin{rem} Under the Cauchy-stability assumption~\ref{assumption2:cauchystab}, the inclusion $\Gamma\subset \overline{\D}$ is known. From there, it follows that the unbounded connected component of $\C\setminus\Gamma$ contains the whole set~$\U$ so that a weaker form of the lemma is available for considering $z \in \U$ only.
If in addition, the considered scheme is also \emph{dissipative}, meaning that its symbol $\symbole$ satisfies
\[|\symbole(\xi)|\< 1 - \delta |\xi|^{2s},\quad \xi \in [-\pi,\pi],\]
for some $\delta>0$ and an integer $s\in \N^*$ independent of $\xi$,
then the same separation result is available for $z\in\overline{\U}\setminus\{1\}$. The reason for that property is that in that case one has $\S\cap\Gamma=\{1\}$.
\end{rem}

\begin{proof}[Proof of Lemma~\ref{lem2:hersh}]\ \par
        \begin{enumerate}
        \item Assume there exists a root $\kappa$ of \eqref{eq2:eqcharac} on the unit circle, then one can find $\theta \in \R$ such that $\kappa = e^{i\theta}$. So we have
    \[
        z = \sum_{j = -r}^p a_j \kappa^j = \sum_{j=-r}^p a_j e^{ij\theta}  = \symbole(\theta).
    \]
    This is a contradiction because $z \in \Gamma$ and by assumption $z \in \C\setminus \Gamma$. This concludes the proof.
        \item We denote $\Cconnect$ the unbounded connected component of $\C \setminus \Gamma$. The polynomial \eqref{eq2:eqcharac} has $p+r$ roots (with multiplicity). It is sufficient to count how many roots there are inside the unit disk to deduce the number of roots outside.
        By continuity of the roots with respect to coefficients and because there is no root on the unit circle for $z\in \Cconnect$, we know that there is a constant number of roots inside the unit disk for all $z\in \Cconnect$.
        By Rouché's theorem, one can study the zeros of $f_z(\kappa) = \kappa^r - \frac{1}{z}(a_{-r} + a_{-r+1} \kappa + \cdots + a_p \kappa^{p+r})$ and $g_z(\kappa) = \kappa^r - \frac{1}{z}a_{-r}$ in $\D$ for $z$ sufficiently large to have the result.
    \end{enumerate}
\end{proof}

Lemma~\ref{lem2:hersh} (Hersh) 
above is illustrated in Figure~\ref{fig2:hersh}.
The first two lines correspond to the Lemma~\ref{lem2:hersh} (Hersh) and the third one describes the possible configuration for $z\in\Gamma\cap\S$, typically not meeting the assumptions. This case will be the object of a subsequent discussion.

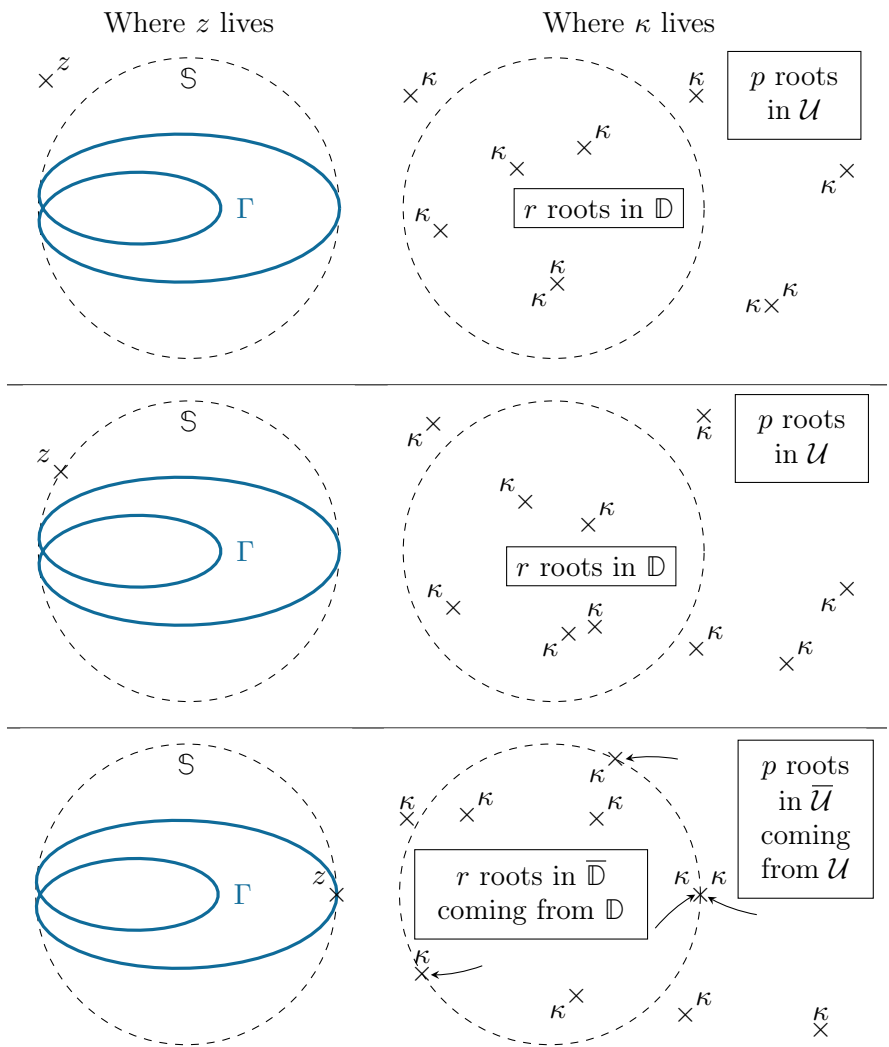
\begin{figure}
    \centering
    \begin{tabular}{cc}
        Where $z$ lives & Where $\kappa$ lives \\
\begin{tikzpicture}
    \draw[white, fill] (-2.2,-2.2) rectangle (2.2,2.2);
    \draw[dashed] (0,0) circle (2);
    \draw[very thick, PLB] plot[domain=0:360,samples=100] ({2.193*(0.72*cos(2*\x) +0.36*cos(\x) - 0.08)-0.188} , {2*(-0.36*sin(2*\x) - 0.18*sin(\x))} );
    \draw  (-1.9,1.7) node{$\times$} node[above right]{$z$};
    \draw  (0.5,0) node[right,PLB]{$\Gamma$};
    \draw  (0,2) node[below]{$\S$};
\end{tikzpicture}

& \begin{tikzpicture}
    \draw[white, fill] (-2.2,-2.2) rectangle (4.2,2.2);
    \draw[dashed] (0,0) circle (2);
    \draw  (-0.4863821504576905,0.5256329266344337) node{$\times$} node[above left]{$\kappa$};
    \draw  (0.4077563762011518,0.7969276234786205) node{$\times$} node[above right]{$\kappa$};
    \draw  (-1.5,-0.3) node{$\times$} node[above left]{$\kappa$};
    \draw  (0.05,-1) node{$\times$} node[below left]{$\kappa$}
    node[above]{$\kappa$};
    \draw  (0.6,0) node[draw]{$r$ roots in $\D$} ;
    \draw  (2.9,-1.3) node{$\times$} node[left]{$\kappa$}
    node[above right]{$\kappa$};
    \draw  (-1.9,1.5) node{$\times$} node[above right]{$\kappa$};
    \draw  (1.9,1.5) node{$\times$} node[above]{$\kappa$};
    \draw  (3.9,0.5) node{$\times$} node[below left]{$\kappa$};
    \draw  (3.2,1.5) node[draw]{\begin{tabular}{c}$p$ roots \\ in $\U$\end{tabular}} ;
\end{tikzpicture}
\\ \hline
\begin{tikzpicture}
    \draw[white, fill] (-2.2,-2.2) rectangle (2.2,2.2);
    \draw[dashed] (0,0) circle (2);
    \draw[very thick, PLB] plot[domain=0:360,samples=100] ({2.193*(0.72*cos(2*\x) +0.36*cos(\x) - 0.08)-0.188} , {2*(-0.36*sin(2*\x) - 0.18*sin(\x))} );
    \draw  (-1.7,1.0535) node{$\times$} node[above left]{$z$};
    \draw  (0.5,0) node[right,PLB]{$\Gamma$};
    \draw  (0,2) node[below]{$\S$};
    
\end{tikzpicture}
&
\begin{tikzpicture}
    \draw[white, fill] (-2.2,-2.2) rectangle (4.2,2.2);
    \draw[dashed] (0,0) circle (2);
    \draw  (-0.3737011056381775,0.6646464581885967) node{$\times$} node[above left]{$\kappa$};
    \draw  (0.46050826642164544,0.3508791988777962) node{$\times$} node[above right]{$\kappa$};
    \draw  (-1.3332841067875879,-0.7465188753809819) node{$\times$} node[above left]{$\kappa$};
    \draw  (0.2,-1.1) node{$\times$} node[below left]{$\kappa$};
    \draw  (0.55,-1) node{$\times$} node[above]{$\kappa$};
    \draw  (0.5,-0.2) node[draw]{$r$ roots in $\D$} ;
    \draw  (1.9,-1.3) node{$\times$} node[above right]{$\kappa$};
    \draw  (3.1,-1.5) node{$\times$} node[above right]{$\kappa$};
    \draw  (-1.6,1.7) node{$\times$} node[below left]{$\kappa$};
    \draw  (2,1.8) node{$\times$} node[below]{$\kappa$};
    \draw  (3.9,-0.5) node{$\times$} node[below left]{$\kappa$};
    \draw  (3.3,1.5) node[draw]{\begin{tabular}{c}$p$ roots \\ in $\U$\end{tabular}} ;
\end{tikzpicture}
\\ \hline
\begin{tikzpicture}
    \draw[white, fill] (-2.2,-2.2) rectangle (2.2,2.2);
    \draw[dashed] (0,0) circle (2);
    \draw[very thick, PLB] plot[domain=0:360,samples=100] ({2.193*(0.72*cos(2*\x) +0.36*cos(\x) - 0.08)-0.188} , {2*(-0.36*sin(2*\x) - 0.18*sin(\x))} );
    \draw  (2,0) node{$\times$} node[above left]{$z$};
    \draw  (0.5,0) node[right,PLB]{$\Gamma$};
    \draw  (0,2) node[below]{$\S$};
\end{tikzpicture}
&
\begin{tikzpicture}
    \draw[white, fill] (-2.2,-2.2) rectangle (4.2,2.2);
    \draw[dashed] (0,0) circle (2);
    \draw  (0.8718,1.8) node{$\times$} node[below left]{$\kappa$};
    \draw  (0.6221277274892463,1) node{$\times$} node[above right]{$\kappa$};
    \draw  (-1.7,-1.0535) node{$\times$} node[above]{$\kappa$};
    \draw[->,>=stealth] (-0.9,-0.95) arc (290:270:2) ;

    \draw  (-1.1,1.0535) node{$\times$} node[above right]{$\kappa$};

    \draw  (2,0) node{$\times$} node[above left]{$\kappa$} node[above right]{$\kappa$};
    \draw[->,>=stealth] (2.75,-0.27) arc (260:240:2) ;
    \draw[->,>=stealth] (1.4,-0.45) arc (140:120:2) ;

    \draw  (3.6,-1.8) node{$\times$} node[above]{$\kappa$};
    \draw  (-1.9,1) node{$\times$} node[above]{$\kappa$};
    \draw  (1.8,-1.6) node{$\times$} node[above right]{$\kappa$};

    \draw  (0.35,-1.34360252969993665) node{$\times$} node[below left]{$\kappa$};
    \draw[->,>=stealth] (1.7,1.8) arc (80:100:2) ;
    \draw  (-0.25,0) node[draw]{\begin{tabular}{c}$r$ roots in $\overline{\D}$\\coming from $\D$\end{tabular}} ;
    \draw  (3.4,1) node[draw]{\begin{tabular}{c}$p$ roots\\in $\overline{\U}$\\coming \\from $\U$\end{tabular}} ;
\end{tikzpicture}
\\
\end{tabular}
\caption{Illustration of Lemma \ref{lem2:hersh}: case $|z|>1$ (first line), case $|z|=1$ and $z \notin \Gamma$ (second line) and case $z \in \Gamma$ where Lemma \ref{lem2:hersh} does not hold (third line).}\label{fig2:hersh}
\end{figure}

For $|z|>1$, by Lemma~\ref{lem2:hersh} (Hersh), the linear subspace of solutions to \eqref{eq2:Ztransform} living in $\ell^2(\N)$ is generated by the following $r$ vectors:



\begin{equation}\label{eq2:basis}
    \begin{pmatrix}1 \\ \kappa_\ell  \\\kappa_\ell ^2 \\ \kappa_\ell ^3 \\\kappa_\ell ^4 \\ \vdots \end{pmatrix}, \begin{pmatrix}0 \\ \kappa_\ell  \\ 2\kappa_\ell ^2 \\ 3\kappa_\ell ^3 \\ 4\kappa_\ell ^4 \\ \vdots \end{pmatrix},\begin{pmatrix}0 \\ \kappa_\ell  \\ 2^{2}\kappa_\ell ^2 \\ 3^{2}\kappa_\ell ^3 \\ 4^{2} \kappa_\ell ^4 \\ \vdots \end{pmatrix},  \dots,\begin{pmatrix}0 \\ \kappa_\ell  \\ 2^{\mult_\ell -1}\kappa_\ell ^2 \\ 3^{\mult_\ell -1}\kappa_\ell ^3  \\ 4^{\mult_\ell -1}\kappa_\ell ^4\\ \vdots \end{pmatrix},\quad  \ell = 1,\dots, \nbrmult\end{equation}
where $\kappa_1, \dots, \kappa_\nbrmult$ of multiplicity $\mult_1, \dots, \mult_\nbrmult$ are the solutions to \eqref{eq2:eqcharac} living in $\D$, with ${\mult_1 +\cdots + \mult_\nbrmult = r}$ 
(we omit the $z$-dependence of $\kappa(z)$ for the sake of readability).

\medskip

\noindent\textbf{Notation.} We denote $\Es$ the linear subspace of solutions to \eqref{eq2:Ztransform} living in $\ell^2(\N)$ and $\H_{i,j}(z) \in \mathcal M_{j-i+1,r}(\C)$ the matrix where we put in columns the extraction of all the lines between $i$ and $j$ (included) of the $r$ vectors of~\eqref{eq2:basis}, where $0\< i \<j$.

\begin{rem}\label{ex2:kappadistinct}
    For $r=2$, if the solutions to \eqref{eq2:eqcharac} are $\kappa_1(z) \neq \kappa_2(z)$, then there are exactly two roots with multiplicity~1. The solutions to \eqref{eq2:Ztransform} can be written
    $\widetilde{U}_j(z) = \alpha_1\kappa_1(z)^j + \alpha_2 \kappa_2(z)^j,$
    and we have
\[
    \H_{0, 2}(z) = \begin{pmatrix}1 & 1 \\ \kappa_1(z) & \kappa_2(z)\\\kappa_1(z)^2& \kappa_2(z)^2  \end{pmatrix}.\]
\end{rem}

\begin{rem}\label{ex2:kapparacinedouble}
    Still for $r=2$, if the solution to \eqref{eq2:eqcharac} now is $\kappa(z)$ with multiplicity 2, then the solutions to \eqref{eq2:Ztransform} can be written  $\widetilde{U}_j(z) = (\alpha_1+\alpha_2 j)\kappa(z)^j$, and we have
\[\H_{0,3}(z) = \begin{pmatrix} 1 & 0\\ \kappa(z) & \kappa(z)\\ \kappa(z)^2 & 2\kappa(z)^2 \\ \kappa(z)^3 & 3 \kappa(z)^3 \end{pmatrix}.\]
\end{rem}

We raise awareness of the dependence in $z$ and of the continuity issues because the map $z \mapsto \H_{i,j}(z)$ is not continuous whereas the set of roots of \eqref{eq2:eqcharac} is a continuous mapping with respect to $z$. Indeed, the root curves $(\kappa_j(z))_{j}$ can intersect, when a multiple root occurs. For example, for $r=2$, if there is $(z_n)_{n\in \N} \subset \U$ with $\kappa_1(z_n) \neq \kappa_2(z_n)$ which converge to $z_{\infty}\in \U$ such that $\kappa_1(z_{\infty}) = \kappa_2(z_{\infty})$ a double root, then we have 
$$\forall j \in \{1,2\}, \quad \kappa_j(z_n) \xrightarrow[n\to \infty]{} \kappa_j(z_{\infty})$$ but $$\H_{0,3}(z_n) = \begin{pmatrix} 1 & 1 \\ \kappa_1(z_n) & \kappa_2(z_n) \\ \kappa_1^2(z_n) & \kappa_2^2(z_n) \\ \kappa_1^3(z_n) & \kappa_2^3(z_n) \end{pmatrix} \cancel{\xrightarrow[n\to \infty]{}} ~\H_{0,3}(z_{\infty}) = \begin{pmatrix} 1 & 0\\ \kappa_1(z_{\infty}) & \kappa_1(z_{\infty}) \\ \kappa_1^2(z_{\infty}) & 2 \kappa_1^2(z_{\infty}) \\ \kappa_1^3(z_{\infty}) & 3 \kappa_1^3(z_{\infty}) \end{pmatrix}.$$
Consequently, the considered basis \eqref{eq2:basis} of $\Es$ does not generally define a continuous mapping with respect to $z$. 

\medskip

In spite of the difficulty enlightened above, it turns out that $\Es$ is a continuous and even holomorphic vector bundle over $\U$ as it is discussed in \cite[Thm 4.3]{Coulombel11}. It is also proved that this vector bundle $\Es$ can even be continuously extended over $\overline{\U}$, thus considering $z\in\S$ as well (see also \cite{Metivier04} for a similar property for the hyperbolic-parabolic PDE case).
The main point therein is that for some $z_0 \in \S$, there may exists one (or several) root $\kappa_0(z_0)$ of \eqref{eq2:eqcharac} on $\S$. At such points $z_0$ the Lemma~\ref{lem2:hersh} (Hersh) does not hold anymore. This situation is depicted on the third line of Figure~\ref{fig2:hersh}.
For $z$ on $\S$, the space $\Es$ still is of dimension $r$ and we extend the notation $\H_{i,j}(z)$. 
%
%
%
%
%
%
We can summarize the above discussion in the following theorem.

\begin{thrm}[\cite{Coulombel11}] \label{th2:CoulombelHolo}
    Under assumption \ref{assumption2:cauchystab}, the space $\Es$ is a holomorphic vector bundle over $\U$ and can be extended in a unique way to a continuous vector bundle over $\overline{\U}$.
\end{thrm}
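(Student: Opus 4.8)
The plan is to leave the space of $\ell^2(\N)$-sequences and recast $\Es$ as the stable invariant subspace of a finite-dimensional $z$-dependent matrix, and then to treat the two assertions separately: holomorphy over the open set $\U$ by a spectral projector, and continuity up to the boundary $\S$ by a local analysis of the roots of \eqref{eq2:eqcharac}. Write $F(z,\kappa) = \sum_{k=-r}^{p} a_k \kappa^{r+k} - z\kappa^r$ for the characteristic polynomial of \eqref{eq2:eqcharac}, of degree $p+r$ in $\kappa$ since $a_p\neq 0$, and let $M(z)\in\mc M_{p+r}(\C)$ be its companion matrix, whose eigenvalues are exactly the roots $\kappa$ of \eqref{eq2:eqcharac}. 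A sequence $(\widetilde U_j)_{j}$ solves \eqref{eq2:Ztransform} if and only if the window vectors $V_j = (\widetilde U_j,\dots,\widetilde U_{j+p+r-1})^{\mathsf T}$ satisfy $V_{j+1}=M(z)V_j$, and such a solution lies in $\ell^2(\N)$ exactly when $V_0$ belongs to the generalized eigenspace of $M(z)$ attached to the eigenvalues in $\D$. The window map $\widetilde U\mapsto V_0$ identifies $\Es$ with this stable subspace $\mc S(z)\subset\C^{p+r}$ for each $z$, compatibly as $z$ varies, so it suffices to prove the two statements for $z\mapsto\mc S(z)$.

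For holomorphy over $\U$ I would invoke Dunford--Riesz functional calculus. By Lemma~\ref{lem2:hersh} (Hersh), for $z\in\U$ no eigenvalue of $M(z)$ lies on $\S$, so the spectral projector
\[\Pi(z) = \frac{1}{2\pi i}\oint_{\S}\bigl(\zeta I - M(z)\bigr)^{-1}\,d\zeta\]
is well defined and $\mc S(z)=\Ran\Pi(z)$. Since $z\mapsto M(z)$ is polynomial and the fixed contour $\S$ avoids the spectrum locally uniformly, $z\mapsto\Pi(z)$ is holomorphic on $\U$; its rank is the number of eigenvalues of $M(z)$ in $\D$, which equals the constant $r$ by Lemma~\ref{lem2:hersh} (Hersh). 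A holomorphic family of projectors of constant rank defines a holomorphic subbundle, whence $\Es$ is holomorphic over $\U$.

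The delicate point, and the one I expect to be the main obstacle, is the continuous extension across $\S$, where Lemma~\ref{lem2:hersh} fails: for $z_0\in\S$ some roots of \eqref{eq2:eqcharac} may sit on the unit circle and the contour defining $\Pi$ then meets the spectrum. I would argue locally. Fix $z_0\in\S$ and split the roots of $F(z_0,\cdot)$ into those strictly inside $\D$, those strictly outside, and finitely many boundary roots $\underline\kappa=e^{i\theta_0}\in\S$, necessarily with $z_0=\symbole(\theta_0)$. The strictly interior roots contribute a piece of the subspace that varies holomorphically near $z_0$, so all the difficulty concentrates on each boundary root. Near $(z_0,\underline\kappa)$ one expands the roots of $F(z,\cdot)$ by Puiseux series and uses assumption~\ref{assumption2:cauchystab}: since $|\symbole|\le 1$ with equality at $\theta_0$, that point is a critical point of $|\symbole|^2$, and together with the standard Kreiss block-structure (geometric regularity) analysis this pins down, independently of the path, how many of the roots emanating from $\underline\kappa$ enter $\D$ as $z\to z_0$ from within $\U$. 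It follows that the rank-$r$ subspace $\mc S(z)$ has a well-defined limit: by compactness of the Grassmannian $\mathrm{Gr}(r,p+r)$ it is enough to check that every subsequential limit of $\mc S(z_n)$, for $z_n\to z_0$ in $\U$, agrees, which the local splitting provides. Uniqueness of the extension is then automatic, since $\U$ is dense in $\overline{\U}$ and a continuous map is determined by its values on a dense set; this produces the continuous vector bundle over $\overline{\U}$.
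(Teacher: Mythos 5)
First, a point of comparison: the paper does not prove this theorem at all — it is imported from \cite[Thm 4.3]{Coulombel11}, and the authors state in Remark~\ref{rem2:rootsfrominsideoutside} that the continuous extension to $\overline{\U}$ follows from the existence of a K-symmetrizer. So your proposal has to be measured against Coulombel's argument rather than anything in this paper. Your interior part is sound and in fact mirrors what the paper does elsewhere (the Dunford--Taylor projector built on the companion matrix reappears in the proof of Lemma~\ref{lem2:coeffholo}): identifying $\Es$ with the range of the Riesz projector of $M(z)$, holomorphic of constant rank $r$ on $\U$ by Lemma~\ref{lem2:hersh}, does yield a holomorphic subbundle, and uniqueness of a continuous extension by density of $\U$ in $\overline{\U}$ is immediate.

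The gap is in the boundary continuity, which is the only hard part of the statement. Knowing \emph{how many} roots emanating from a boundary root $\underline\kappa$ enter $\D$ does not yet give convergence of $\mc S(z)$ in the Grassmannian: the relevant (generalized) eigenvectors of the companion matrix are Vandermonde vectors $v(\kappa)=(1,\kappa,\dots,\kappa^{p+r-1})^{\mathsf T}$, and when $\mu_s$ branches $\kappa_1(z),\dots,\kappa_{\mu_s}(z)$ collapse onto $\underline\kappa$, every individual spanning vector converges to the \emph{same} limit $v(\underline\kappa)$, so the family degenerates and convergence of the span is precisely what must be proved — your sentence ``which the local splitting provides'' asserts the conclusion rather than proving it. The missing ingredient can be supplied elementarily: the span of $v(\kappa_1),\dots,v(\kappa_{\mu_s})$ equals the span of the divided differences $v[\kappa_1], v[\kappa_1,\kappa_2],\dots,v[\kappa_1,\dots,\kappa_{\mu_s}]$, which converge to $v(\underline\kappa),v'(\underline\kappa),\dots,v^{(\mu_s-1)}(\underline\kappa)$ as all branches tend to $\underline\kappa$; combined with the fact that $\mu_s$ is path-independent (on the connected set $B(z_0,\eta)\cap\U$ no branch can cross $\S$, by Lemma~\ref{lem2:hersh}), this shows every subsequential limit is the same confluent-Vandermonde subspace. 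This is, in substance, the same mechanism the paper exploits later via Hermite interpolation for $\DKLindep$, and it replaces the K-symmetrizer route Coulombel actually takes. Finally, your appeal to the ``standard Kreiss block-structure (geometric regularity) analysis'' needs a word of justification: geometric regularity is an extra hypothesis for systems, and you should say explicitly that for a scalar one-step scheme it is automatic under \ref{assumption2:cauchystab}, otherwise you are quietly assuming more than the theorem does.
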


\begin{rem}\label{rem2:rootsfrominsideoutside}For the extension, the first difficulty is to select the roots of \eqref{eq2:eqcharac} coming from the inside, indeed, if there is a root on $\S$, it can be coming from the inside of $\D$, the outside or both (in case of multiplicity). In Section~\ref{sec2:numDeltaselection}, we will explain the numerical strategy to select the good ones.
The second difficulty is to prove the continuity of $\Es$ after the extension, it follows from the existence of a K-symmetrizer and is obtained e.g. in \cite[Thm 4.3]{Coulombel11}. As previously observed,  $\H_{i,j}(z)$ is generally not continuous with respect to $z$.
\end{rem}


\subsection{Intrinsic Kreiss-Lopatinskii determinant}
In this section, we define properly formulas \eqref{eq2:definformelDKL} and \eqref{eq2:defDKLindep}. 
Let us consider the $\mathcal Z$-transformed version of the boundary condition~\eqref{eq2:equationBordQuasiToep}, that is
\begin{equation}\label{eqbordZtransform}
    z \begin{pmatrix}\widetilde{U}_0(z) \\ \vdots \\  \widetilde{U}_{r-1}(z) \end{pmatrix}  - \Bbord \begin{pmatrix}\widetilde{U}_0(z) \\ \vdots \\  \widetilde{U}_{m-1}(z) \end{pmatrix} =  \begin{pmatrix} 
        a_{-r} & \cdots  &  a_{-1} \\
        & \ddots & \vdots \\
        0 & & a_{-r}
    \end{pmatrix} \begin{pmatrix}\widetilde{g}_{-r}(z) \\ \vdots \\ \widetilde{g}_{-1}(z) \end{pmatrix}.
\end{equation}
%
%
%
Injecting the solution $(\widetilde{U}_j(z))_{j\in \N}\in \Es$ to~\eqref{eq2:Ztransform} into \eqref{eqbordZtransform}, we obtain a system of $r$ equations with~$r$ scalar unknowns: they are the coefficients of $(\widetilde{U}_j(z))_{j\in \N}$ written in the basis~\eqref{eq2:basis} of $\Es$. 

\begin{rem} For $r=2$ and a given value of $z$ (we skip for convenience the dependence in $z$ hereafter), if $\kappa_1 \neq \kappa_2$ so that the solution to~\eqref{eqbordZtransform} has the form $\alpha_1\kappa_1^j + \alpha_2 \kappa_2^j$, then that solution is constrained by the system~\eqref{eqbordZtransform}.
The matricial form of that system reads
\[
    \left (z\begin{pmatrix} 1 & 1 \\ \kappa_1 & \kappa_2 \end{pmatrix}  - \Bbord \begin{pmatrix}  1 & 1 \\ \kappa_1 & \kappa_2 \\ \kappa_1^2 & \kappa_2^2 \\  \vdots & \vdots  \\ \kappa_1^{m-1} & \kappa_2^{m-1}   \end{pmatrix}  \right ) \begin{pmatrix} \alpha_1 \\ \alpha_2 \end{pmatrix} = \begin{pmatrix} a_{-1}\widetilde{g}_{-1} + a_{-2}\widetilde{g}_{-2} \\ a_{-2}\widetilde{g}_{-1}
    \end{pmatrix}.
\]
The injectivity, whence invertibility, of the boundary condition is thus directly related to the property $\det (z\H_{0,1} - \Bbord \H_{0,m-1}(z))\neq 0$, where $z\H_{0,1} - \Bbord \H_{0,m-1}(z)\in \mathcal M_{2,2}(\C)$.
\end{rem}

\begin{deff}[Kreiss-Lopatinskii determinant]\label{def2:detKL}
    The \emph{Kreiss-Lopatinskii determinant} is the complex-valued function defined for $|z|\>1$ by:
    \[\DKL(z) = \det(z\H_{0,r-1}(z) - \Bbord \H_{0,m-1}(z)).\]
\end{deff}

Despite the fact that the space $\Es$ is a holomorphic vector bundle over $\U$ and continuous over~$\overline{\U}$ (Theorem~\ref{th2:CoulombelHolo}), this determinant $\DKL$ is not holomorphic on $\U$. To retrieve those properties, we define the \emph{intrinsic Kreiss-Lopatinskii determinant} $\DKLindep$ that we can motivate by the following informal discussion.
The above Kreiss-Lopatinskii determinant is actually not well defined until we order in some way the roots $(\kappa_j(z))_{j=1,\dots,r}$ of~\eqref{eq2:eqcharac}. There are two points to emphasize. The first one is related to crossing roots and already discussed after Remark~\ref{ex2:kapparacinedouble}. The second one is that, outside crossing cases, being given any choice for the ordering of the roots (and thus of the vectors of the basis~\eqref{eq2:basis} for the vector bundle), there is in general no chance to obtain a holomorphicity property for the components of the matrix $\H_{0,m-1}(z)$ over $\U$. For example, even the roots of $X^2-z$ are not holomorphic w.r.t $z\in \U$ because of the logarithm determination. On the other side, any symmetric functions
of the roots $(\kappa_j(z))_{j=1,\dots,r}$ however are holomorphic because they can be obtained directly in terms of the coefficients of the polynomial~\eqref{eq2:eqcharac}. So, except for crossing roots, the same holds for the quantity $\DKL(z)$ since the matrices $B$ and $\Bbord$ are constants and the determinant itself is a symmetric function.

A very natural way to reach the holomorphic property and go beyond the last difficulties consists in dividing $\DKL$ by the quantity $\det \H_{0,r-1}(z)$. Hence, the same permutation or combination of the vectors of the basis~\eqref{eq2:basis} is
involved in both computations. This \emph{intrinsic Kreiss-Lopatinskii determinant} has already been introduced and studied in a particular case in~\cite{Boutin22}.

\begin{deff}[Intrinsic Kreiss-Lopatinskii determinant]
    The \emph{intrinsic Kreiss-Lopatinskii determinant} is the complex-valued function defined for $|z|\>1$ by:
\begin{equation}\label{eq2:dklintrinsic}
    \DKLindep(z)=\dfrac{\DKL(z)}{\det \H_{0,r-1}(z)}.
\end{equation}
\end{deff}
Let us note that the intrinsic Kreiss-Lopatinskii determinant can be rewritten
\begin{equation}\label{eq2:DKLavecKK}
    \DKLindep(z) = \dfrac{\det(z\H_{0,r-1}(z) - \Bbord \H_{0,m-1}(z))}{\det \H_{0,r-1}(z)}  = 
    z^r \det \left ( I_r - \dfrac{\Bbord \H_{0,m-1}(z) \H_{0,r-1}(z)^{-1}}{z}\right ).
\end{equation}
To conclude with these definitions, let us state a little more about the \emph{Uniform Kreiss-Lopatinksii Condition}. With the above notations and additionally to the invertibility of $z\H_{0,r-1}(z) - \Bbord \H_{0,m-1}(z)$, it corresponds to the existence of a constant $C>0$ such that for any $z\in\overline{\U}$, any $\widetilde{U}\in\Es$ solution to~\eqref{eqbordZtransform} satisfies the uniform estimate $\|\widetilde{U}\| \< C \|\widetilde{g}\|$.
From the Parseval identity for the $\mathcal{Z}$-transform, this inequality gives directly the first necessary half-part of the strong stability estimate~\eqref{eq2:stability}. We refer the reader to~\cite{Gustafsson13} for a more detailed presentation.


\subsection{Main results} \label{sec2:result}
Theorem~\ref{thrm2:mainholocont} is our main theoretical result. 
It states that the intrinsic Kreiss-Lopatinskii determinant has the same regularity properties as $\Es$, see Theorem~\ref{th2:CoulombelHolo}.


\begin{thrm}[Smoothness of the intrinsic Kreiss-Lopatinskii determinant]\label{thrm2:mainholocont}
    Assume \ref{assumption2:cauchystab}. The intrinsic Kreiss-Lopatinskii determinant $\DKLindep$ is holomorphic on $\U$ and continuous on $\overline{\U}$.
\end{thrm}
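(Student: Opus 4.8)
The plan is to reduce the statement to two structural facts about the quotient defining $\DKLindep$ — that it is independent of the chosen basis of $\Es$, and that its denominator never vanishes on $\overline\U$ — after which the regularity is inherited directly from the vector bundle structure of $\Es$ granted by Theorem~\ref{th2:CoulombelHolo}. First I would record the \emph{basis-independence}. If the spanning family~\eqref{eq2:basis} of $\Es$ is replaced by any other basis, the change of basis is an invertible matrix $P(z)\in\mathcal M_r(\C)$ that right-multiplies every block $\H_{i,j}(z)$. Hence both $\det(z\H_{0,r-1}-\Bbord\H_{0,m-1})$ and $\det\H_{0,r-1}$ get multiplied by $\det P(z)$, which cancels in~\eqref{eq2:dklintrinsic}. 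Thus $\DKLindep(z)$ depends only on the subspace $\Es$ and not on the (discontinuous) ordering of the roots $\kappa_\ell(z)$; this is precisely what makes the quotient ``intrinsic'' and why the pathologies illustrated after Remark~\ref{ex2:kapparacinedouble} disappear.

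Second, I would transfer regularity from the bundle to $\DKLindep$ via \emph{local frames}. By Theorem~\ref{th2:CoulombelHolo}, $\Es$ is a holomorphic vector bundle on $\U$, so every $z_0\in\U$ has a neighbourhood carrying a holomorphic frame $(\tilde e_1(z),\dots,\tilde e_r(z))$; building $\tilde\H_{i,j}(z)$ from it yields holomorphic matrix entries, and by the previous step $\DKLindep(z)=\det(z\tilde\H_{0,r-1}-\Bbord\tilde\H_{0,m-1})/\det\tilde\H_{0,r-1}$ on that neighbourhood. This exhibits $\DKLindep$ locally as a quotient of holomorphic functions, hence holomorphic, provided the denominator does not vanish. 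The same argument run with a \emph{continuous} frame near any $z_0\in\S$ — available since the bundle extends continuously to $\overline\U$ — gives continuity up to the unit circle. Everything therefore reduces to the non-vanishing of $\det\H_{0,r-1}(z)$ on all of $\overline\U$.

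The main obstacle is precisely this non-vanishing, and the delicate part is on $\S$, where Lemma~\ref{lem2:hersh} fails. For $z\in\U$ it is clean: the columns of $\H_{0,r-1}(z)$ are the truncations $(j^s\kappa_\ell^j)_{0\le j\le r-1}$, and a kernel relation would produce a nonzero exponential polynomial $\sum_\ell q_\ell(j)\kappa_\ell^j$, with $\deg q_\ell<\mult_\ell$, vanishing at the $r=\mult_1+\cdots+\mult_\nbrmult$ consecutive integers $j=0,\dots,r-1$. Since the $\kappa_\ell$ are distinct and nonzero (note $a_{-r}\neq0$ forbids $\kappa=0$ as a root of~\eqref{eq2:eqcharac}), such a sequence satisfies an order-$r$ linear recurrence with nonzero leading and trailing coefficients, and $r$ consecutive zeros force it to vanish identically, a contradiction. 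Equivalently, the coordinate projection $U\mapsto(U_0,\dots,U_{r-1})$ is an isomorphism $\Es\to\C^r$. On $\S$ I would invoke the description recalled in Remark~\ref{rem2:rootsfrominsideoutside}: the continuously extended fiber is again spanned by confluent exponential-polynomial modes attached to the roots ``coming from the inside'', which remain nonzero and furnish distinct interpolation nodes, so the same Vandermonde argument applies and the denominator stays nonzero. Combining the three steps yields holomorphy on $\U$ and continuity on $\overline\U$.

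Finally I would point to an explicit alternative that bypasses the abstract bundle input and underlies the computations of Section~\ref{sec2:numerical}: rewrite $\DKLindep(z)=z^r\det(I_r-z^{-1}\Bbord\,\H_{0,m-1}(z)\H_{0,r-1}(z)^{-1})$ as in~\eqref{eq2:DKLavecKK}, identify $\Es$ with the kernel of the shift recurrence whose characteristic polynomial is the \emph{stable factor} $\chi_-(z,\cdot)$ collecting the $r$ inside roots, and observe that the coefficients of $\chi_-$ are holomorphic symmetric functions of those roots — representable by a contour integral over $\S$, well defined because no root crosses $\S$ on $\U$ and continuous up to $\S$. The transfer matrix $\H_{0,m-1}\H_{0,r-1}^{-1}$ is then obtained by forward substitution from this recurrence, so its entries are holomorphic on $\U$ and continuous on $\overline\U$, while $\det\H_{0,r-1}\ne0$ reduces to $\chi_-(z,0)\ne0$; this is the route that Hermite interpolation makes effective.
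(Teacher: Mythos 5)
Your proof is correct, but it takes a genuinely different route from the paper's. You treat Theorem~\ref{th2:CoulombelHolo} as the source of all regularity and reduce the claim to two facts: (i) the quotient~\eqref{eq2:dklintrinsic} is invariant under any change of basis of the fiber, and (ii) $\det \H_{0,r-1}(z)\neq 0$ on all of $\overline{\U}$; holomorphy and continuity then follow by evaluating the quotient in local holomorphic (resp.\ continuous) frames of the bundle $\Es$. Your non-vanishing argument is sound, including its boundary case: an element of the fiber killed by the projection onto the first $r$ coordinates satisfies an order-$r$ recurrence whose trailing coefficient is $\pm\prod_\ell\kappa_\ell^{\mult_\ell}\neq 0$ (since $a_{-r}\neq0$ excludes $\kappa=0$), so $r$ consecutive zeros force it to vanish identically. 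The paper, by contrast, does not invoke the bundle theorem in its proof: it establishes holomorphy of the coefficients of the stable factor $\RR_z$ directly via a Dunford--Taylor projector built on the companion matrix (Lemma~\ref{lem2:coeffholo}), identifies $\H_{0,m-1}(z)\H_{0,r-1}(z)^{-1}$ with the Hermite interpolation operator $\HH_z$ (Lemma~\ref{lem2:KKtranspo}), and obtains holomorphy and continuity of its entries from the contour-integral error formula~\eqref{eq2:Hermiteerror} (Propositions~\ref{pr2:holomDKL} and~\ref{pr2:contDKL}); your closing paragraph correctly anticipates exactly this route. Your version is shorter and more conceptual, but it leans on the full strength of the cited bundle result; the paper's version is constructive, produces the explicit expression~\eqref{eq2:coeffKK} whose bound at infinity (Proposition~\ref{pr2:borneDKL}) is what pins down the pole order in the proof of Corollary~\ref{thrm2:nbrzerodet}, and feeds directly into the computable reformulation used in Section~\ref{sec2:numerical}.
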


By equation~\eqref{eq2:dklintrinsic}, the function $\DKLindep$ shares the same zeros with the Kreiss-Lopatinskii determinant~$\DKL$, so that it can be used as an alternative in the Uniform Kreiss-Lopatinskii Condition, see Theorem~\ref{th2:thrmKreiss} (Kreiss).
Another property, important for the forthcoming applications, lies in the next Corollary~\ref{thrm2:nbrzerodet} and involves the following important geometrical object:
\begin{deff}
    The \emph{Kreiss-Lopatinskii curve} $\DKLindep(\S)$ is the closed complex parameterized curve
    \[ \DKLindep(\S) = \{ \theta\in[0,2\pi]\mapsto \DKLindep(e^{i\theta})\}.\]
\end{deff}

Using the residue theorem\footnote{All the complex analysis results can be found in \cite{Lang13}.} thanks to  Theorem \ref{thrm2:mainholocont}, we obtain the following result.
\begin{coro}[Number of zeros of the intrinsic Kreiss-Lopatinskii determinant]\label{thrm2:nbrzerodet}
    Assume \ref{assumption2:cauchystab}. If $0 \notin \DKLindep(\S)$ then the equation $\DKLindep(z) = 0$ has exactly $r - \Ind_{\DKLindep(\S)}(0)$ zeros in $\U$.
\end{coro}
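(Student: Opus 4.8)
The plan is to count the zeros of $\DKLindep$ in $\U$ via the argument principle applied to an annulus $\{1<|z|<R\}$, and then to identify the two boundary contributions: the large outer circle will supply the integer $r$, and the inner circle near $\S$ will supply $\Ind_{\DKLindep(\S)}(0)$. First I would check that the count is well defined. Since $0\notin\DKLindep(\S)$ and $\DKLindep$ is continuous on $\overline{\U}$ by Theorem~\ref{thrm2:mainholocont}, compactness of $\S$ gives $|\DKLindep|\>\varepsilon>0$ on $\S$, hence (by uniform continuity) on a closed collar $\{1\<|z|\<1+\delta\}$; thus no zeros lie in this collar and none accumulate at $\S$. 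Combined with holomorphy on $\U$ (isolated zeros) and the boundedness of the zeros established next, this yields finitely many zeros, so the number $N$ is well defined.

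The technical heart is the behavior of $\DKLindep$ as $z\to\infty$. Using the reformulation~\eqref{eq2:DKLavecKK}, $\DKLindep(z)=z^r\det\!\left(I_r-z^{-1}\Bbord\,\H_{0,m-1}(z)\H_{0,r-1}(z)^{-1}\right)$. For large $|z|$, Lemma~\ref{lem2:hersh} together with the Rouché argument used in its proof shows that the $r$ roots of~\eqref{eq2:eqcharac} in $\D$ are simple and tend to $0$ (they are asymptotic to the $r$-th roots of $a_{-r}/z$). The matrix $M(z):=\H_{0,m-1}(z)\H_{0,r-1}(z)^{-1}$ is the propagation matrix of the order-$r$ recurrence whose characteristic polynomial is $\prod_\ell(\kappa-\kappa_\ell(z))$, so its entries are elementary symmetric functions of the small roots: its first $r$ rows form $I_r$, while its rows of index $\>r$ tend to $0$ as the roots vanish. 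Hence $M(z)$ stays bounded, $z^{-1}\Bbord M(z)\to 0$, and $\DKLindep(z)=z^r(1+o(1))$. In particular the zeros of $\DKLindep$ are bounded, and $\DKLindep(z)/z^r\to 1$.

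Next I would apply the argument principle on $\{1+\delta'<|z|<R\}$ with $\delta'<\delta$ small and $R$ large, both boundary circles being zero-free; this is legitimate because $\DKLindep$ is holomorphic on a neighborhood of this closed annulus. With the positively oriented boundary (outer circle counterclockwise, inner circle clockwise) one gets $N=\frac{1}{2\pi i}\oint_{|z|=R}\frac{\DKLindep'}{\DKLindep}\,dz-\frac{1}{2\pi i}\oint_{|z|=1+\delta'}\frac{\DKLindep'}{\DKLindep}\,dz$. Since $\DKLindep(z)/z^r\to 1$, for $R$ large the image of $\{|z|=R\}$ under $\DKLindep/z^r$ stays in a half-plane avoiding $0$, so its winding number is $0$ and the outer integral equals the winding of $z^r$, namely $r$. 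Since $\DKLindep$ is continuous and zero-free on $\{1\<|z|\<1+\delta'\}$, homotopy invariance of the winding number identifies the inner integral with the winding of $\theta\mapsto\DKLindep(e^{i\theta})$, that is $\Ind_{\DKLindep(\S)}(0)$. Subtracting gives $N=r-\Ind_{\DKLindep(\S)}(0)$.

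The main obstacle is twofold. The boundary-only regularity is handled by the zero-free collar together with homotopy invariance of the winding number, which lets me replace the integral over $\S$ by one over a slightly larger circle on which $\DKLindep$ is genuinely holomorphic, so no holomorphy on $\S$ itself is needed. The more delicate point is the asymptotics at infinity: here $\H_{0,r-1}(z)^{-1}$ blows up as the small roots collide at $0$ (its determinant is a Vandermonde that vanishes), and the essential observation that saves the argument is that the product $M(z)=\H_{0,m-1}\H_{0,r-1}^{-1}$ has entries that are bounded symmetric functions of the roots (the recurrence coefficients), so the singular factors cancel and $\DKLindep(z)\sim z^r$.
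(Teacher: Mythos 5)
Your argument is correct, and it is the same argument as the paper's up to the change of variable $z\mapsto 1/z$: the paper defines $\widetilde{\Delta}(z)=\DKLindep(1/z)$, observes that it is meromorphic on $\D$ with a single pole at $0$ whose order is exactly $r$ (this is where Proposition~\ref{pr2:borneDKL}, the boundedness of $\H_{0,m-1}\H_{0,r-1}^{-1}$ on $\overline{\U}$, enters), and reads off $\Ind_{\DKLindep(\S)}(0)=\#\mathrm{zeros}-r$ from the residue theorem on $\S$. Your annulus version makes two things explicit that the paper leaves compressed. First, the paper applies the residue theorem directly on the path $\S$, where $\widetilde{\Delta}$ is only continuous; your zero-free collar plus homotopy invariance of the winding number is precisely the justification needed there, so this is a point where your write-up is more careful rather than different. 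Second, where the paper obtains the pole order $r$ by citing the boundedness proposition (itself proved via the Hermite interpolation error formula and Gauss--Lucas), you re-derive the sharper asymptotic $\DKLindep(z)\sim z^r$ from scratch by identifying $M(z)=\H_{0,m-1}(z)\H_{0,r-1}(z)^{-1}$ as the propagation matrix of the order-$r$ recurrence with coefficients the elementary symmetric functions of the small roots, which all tend to $0$ as $|z|\to\infty$; this is a self-contained and arguably more transparent route to the same fact, at the cost of redoing work the paper has already packaged into Proposition~\ref{pr2:borneDKL}. Both versions deliver the identity $N=r-\Ind_{\DKLindep(\S)}(0)$ by the same balance: $r$ from the behavior at infinity, $\Ind_{\DKLindep(\S)}(0)$ from the boundary circle.
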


Here above and in all the paper, $\Ind_{\DKLindep(\S)}(0)$ denotes the winding number of the origin with respect to the closed oriented curve $\DKLindep(\S)$ (see \cite{Lang13} for a definition of the winding number). This corollary helps us to establish an efficient and practical method to study the stability of a given IBVP through Theorem~\ref{th2:thrmKreiss} (Kreiss).
In particular, the low computational cost of the following procedure is very appealing for the study of parameterised IBVP's, see Section~\ref{sec2:numerical}.

\begin{procedure}[Uniform Kreiss-Lopatinskii Condition check]\label{proc2:numericalprocedure} 
There are two different cases:
\begin{itemize}[label = $\bullet$]
    \item if $0 \in \courbe$, then there exists $z_0 \in \S$ such that $\DKLindep(z_0) = 0$.
    \item if $0 \notin \courbe$, $\DKLindep$ does not vanish on $\S$ and it has $r - \ind_{\courbe}(0)$ zeros in $\U$ by Theorem~\ref{thrm2:nbrzerodet}. It follows that if $\ind_{\courbe}(0) = r$ then the scheme is stable. Otherwise the scheme is unstable.
\end{itemize}
\end{procedure}
In summary, by Theorem~\ref{th2:thrmKreiss} (Kreiss) and since Uniform Kreiss-Lopatinskii Condition is fulfilled if and only if the Kreiss-Lopatinskii determinant has no zero in $\overline{\U}$, Method~\ref{proc2:numericalprocedure} can be used to conclude that the scheme is stable or not. Some illustrations for the O3 scheme and the fifth-order Lax-Wendroff scheme follow in Section~\ref{sec2:numerical}.

\section{Proof of Theorem~\ref{thrm2:mainholocont} and Corollary~\ref{thrm2:nbrzerodet}} \label{sec2:proof}

\subsection{Constant-recursive sequence of order $r$}

For each $z\in \U$, we denote $\PP_z$ the polynomial linked to the characteristic equation~\eqref{eq2:eqcharac}, i.e.
\begin{equation}\label{eq2:poleqcharacP}\PP_z(\kappa) = a_p \kappa^{r+p} + \cdots + a_1 \kappa^{r+1} + (a_0 - z)\kappa^r + a_{-1}\kappa^{r-1} + \cdots + a_{-r+1}\kappa + a_{-r}.\end{equation}
By Lemma~\ref{lem2:hersh} (Hersh), the polynomial $\PP_z(\kappa)$ can be factorized into two polynomials: one with the~$r$ roots in $\D$, denoted $\RR_z(\kappa)$ and one with the $p$ roots in $\U$, denoted $\QQ_z(\kappa)$. We know that the coefficients of $\PP_z$ are holomorphic in $z$. We already said that the basis~\eqref{eq2:basis} is not holomorphic because the roots $\kappa$ are not. In the next result, we prove that the symmetric functions of the $r$ roots $\kappa$ living in $\D$ are indeed holomorphic in $\U$, in other words, the coefficients of $\RR_z$ are holomorphic in~$\U$.

\begin{lemma}\label{lem2:coeffholo}
    For all $z \in \U$, the polynomial $\RR_z(X) =\prod_{j=1}^r (X-\kappa_j(z))$ has holomorphic coefficients in $\U$, where $(\kappa_j(z))_{j=1}^r$ are the $r$ roots (with multiplicity) in $\D$ of~\eqref{eq2:eqcharac}.
\end{lemma}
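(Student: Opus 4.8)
The plan is to express the coefficients of $\RR_z$ as holomorphic functions of $z$ by writing the power sums of the roots inside $\D$ as contour integrals and then recovering the coefficients via Newton's identities. The key enabling fact, supplied by Lemma~\ref{lem2:hersh} (Hersh), is that for every $z\in\U$ the polynomial $\PP_z$ has no root on $\S$ and exactly $r$ roots (counted with multiplicity) in $\D$; in particular $\PP_z$ does not vanish anywhere on the contour $\S$, so the logarithmic derivative $\PP_z'/\PP_z$ is well defined there and $\S$ separates the $r$ roots in $\D$ from the $p$ roots in $\U$.

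First I would introduce, for each integer $k\>0$, the power sums of the roots lying in $\D$, namely
\[
p_k(z) = \sum_{j=1}^r \kappa_j(z)^k = \dfrac{1}{2\pi i}\oint_{\S} \kappa^k\,\dfrac{\PP_z'(\kappa)}{\PP_z(\kappa)}\, d\kappa .
\]
The second equality is the residue theorem (a weighted argument principle): the integrand is meromorphic in $\kappa$ with poles exactly at the roots of $\PP_z$, and since $\PP_z'/\PP_z$ has a simple pole with residue equal to the multiplicity at each root, the residue of $\kappa^k \PP_z'/\PP_z$ at a root $\kappa_j$ of multiplicity $\mu$ is $\mu\,\kappa_j^k$. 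As only the $r$ roots situated in $\D$ are enclosed by $\S$, the integral returns precisely the sum of the $k$-th powers of those roots, counted with multiplicity.

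Next I would establish that each $p_k$ is holomorphic on $\U$. Since $\PP_z(\kappa)$ is polynomial in both variables (indeed affine in $z$), the map $(z,\kappa)\mapsto \kappa^k\,\PP_z'(\kappa)/\PP_z(\kappa)$ is jointly continuous on $\U\times\S$ and holomorphic in $z$ for each fixed $\kappa\in\S$, the denominator never vanishing on $\S$ by Lemma~\ref{lem2:hersh}. Holomorphy of $z\mapsto p_k(z)$ then follows either by differentiating under the integral sign, or by combining Morera's theorem with Fubini's theorem over a small triangle contained in $\U$. Finally, the coefficients of $\RR_z$ are, up to sign, the elementary symmetric functions $e_1(z),\dots,e_r(z)$ of $\kappa_1(z),\dots,\kappa_r(z)$, and Newton's identities express each $e_k$ as a fixed polynomial (with rational coefficients) in $p_1,\dots,p_k$; a polynomial combination of holomorphic functions being holomorphic, every coefficient of $\RR_z$ is holomorphic on $\U$.

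The main point requiring care is the justification of the contour-integral representation of the power sums, specifically that $\S$ isolates the $r$ roots of $\D$ without any root sitting on $\S$ (which is exactly the content of Lemma~\ref{lem2:hersh}), together with the interchange of differentiation and integration. Both rest on the non-vanishing of $\PP_z$ on the compact contour $\S$, which also grants the local uniform bounds on $1/\PP_z$ needed to differentiate under the integral. Everything else is routine once this separation is in hand.
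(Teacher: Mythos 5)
Your proof is correct, and it takes a genuinely different (though conceptually parallel) route from the paper. The paper works at the matrix level: it forms the companion matrix $C_z$ of $\PP_z$, builds the Riesz/Dunford--Taylor projector $\Pi(z)=\frac{1}{2i\pi}\int_{\S}(\zeta I_{r+p}-C_z)^{-1}d\zeta$ onto the generalized eigenspace attached to the roots inside $\D$, observes that $z\mapsto C_z\circ\Pi(z)$ is holomorphic on $\U$ as a parameter integral, and reads off $\RR_z$ from the characteristic polynomial $X^p\RR_z(X)$ of $C_z\circ\Pi(z)$ using the splitting $\C^{r+p}=\mathbb E_s(z)\oplus\mathbb E_u(z)$. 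You instead work at the scalar level: the weighted argument principle gives the power sums $p_k(z)=\frac{1}{2i\pi}\oint_{\S}\kappa^k\,\PP_z'(\kappa)/\PP_z(\kappa)\,d\kappa$ of the roots in $\D$, holomorphy in $z$ follows from non-vanishing of $\PP_z$ on $\S$ (which, as you correctly note, is exactly what Lemma~\ref{lem2:hersh} supplies for $z\in\U$, since $\U$ lies in the unbounded component of $\C\setminus\Gamma$ under \ref{assumption2:cauchystab}), and Newton's identities convert the $p_k$ into the elementary symmetric functions, hence into the coefficients of $\RR_z$. Both arguments hinge on the same two inputs --- the separation of roots by $\S$ and holomorphy of a parameter-dependent contour integral --- but yours is more elementary and self-contained (no companion matrix, no spectral projector, no direct-sum decomposition), at the cost of the extra combinatorial step through Newton's identities; the paper's version has the advantage of meshing directly with the vector-bundle viewpoint on $\Es$ used elsewhere in the text. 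One could quibble that you only sketch the residue computation and the differentiation under the integral sign, but both are standard and correctly justified by the uniform lower bound on $|\PP_z|$ over the compact contour $\S$.
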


\begin{proof}
    We use the Dunford-Taylor formula with $\CC_z$ the companion matrix of the polynomial~\eqref{eq2:eqcharac}:
    \begin{equation*}
        \Pi(z) = \dfrac{1}{2\pi} \int_{\S} (\zeta I_{r+p} - C_z)^{-1} d\zeta
    \end{equation*}
    It is the projection along  $\EEs(z) = \ker \prod_{j = 1}^r  (C_z - \kappa_j(z))$ onto $\EEu(z) = \ker \prod_{j = r+1}^{r+p}  (C_z - \kappa_j(z))$ where $(\kappa_j(z))_{j=r+1}^{r+p}$ are the roots of \eqref{eq2:eqcharac} in $\U$, because $(\kappa_j(z))_{j=1}^{r}$ are surrounded by $\S$ and $(\kappa_j(z))_{j=r+1}^{r+p}$ are not. The projector $\Pi(z)$ is holomorphic on $\U$ since it is a holomorphic parameter integral. We have $C_z \circ \Pi(z)_{|\EEu(z)} = 0$ and $C_z \circ \Pi(z)_{|\EEs(z)} = C_z$, then the characteristic polynomial of $C_z \circ \Pi(z)$ is $X^p\RR_z(X)$ because $\C^{r+p} = \EEs(z) \oplus \EEu(z)$. The function $z \mapsto C_z \circ \Pi(z)$ is holomorphic on $\U$, then the coefficients of its characteristic polynomial are too. This concludes the proof.
\end{proof}

\subsection{Hermite interpolation}
To prove the holomorphic properties of $\DKLindep$, by \eqref{eq2:DKLavecKK}, it is sufficient to study the function $z \mapsto K_{0,m-1}(z)K_{0,r-1}^{-1}(z)$. To simplify this study, for $z \in \overline{\U}$, we introduce on the linear map 
\begin{equation}\label{eq2:defPhiZ}\HH_z : Q \in \C_{m-1}[X] \mapsto \HH_z(Q) \in \C_{r-1}[X]\end{equation}
where $\HH_z(Q)$ is the Hermite interpolation polynomial of degree less than $r-1$ defined by the value $(Q(\kappa_1(z)),\dots, Q^{(\mult_1-1)}(\kappa_1(z)),Q(\kappa_2(z)), \dots, Q^{(\mult_2-1)}(\kappa_2(z)), \dots, Q^{(\mult_\nbrmult-1)}(\kappa_\nbrmult(z)))$, where the $\kappa$'s are the same as in~\eqref{eq2:basis}.  The link between the matrix $K_{0,m-1}(z)K_{0,r-1}^{-1}(z)$ and $\varphi_z$ is given by:

\begin{lemma}\label{lem2:KKtranspo}
    For all $z \in \overline{\U}$, the transpose of the matrix $K_{0,m-1}(z)K_{0,r-1}^{-1}(z)$ is the representation in the canonical basis of $\HH_z$ defined in~\eqref{eq2:defPhiZ}.
\end{lemma}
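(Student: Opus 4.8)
The plan is to unwind both matrices column by column and to recognise the product $K_{0,m-1}(z)K_{0,r-1}(z)^{-1}$ as the transpose of a genuine interpolation operator. First I would read off the entries of the matrices from the basis~\eqref{eq2:basis}: grouping the columns according to the roots $\kappa_1,\dots,\kappa_{\nbrmult}$ and, inside the block of $\kappa_\ell$, indexing them by $s=0,\dots,\mult_\ell-1$, the $(j,(\ell,s))$ entry of $K_{0,m-1}(z)$ is $j^s\kappa_\ell^{\,j}$ for $j\in\interval{0}{m-1}$ (with the convention $0^0=1$), and similarly for $K_{0,r-1}(z)$ with $j\in\interval{0}{r-1}$. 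Introducing the operator $D=\kappa\,\partial_\kappa$, which acts on monomials by $D^s\kappa^{\,j}=j^s\kappa^{\,j}$, I observe that for a polynomial $Q(X)=\sum_j q_jX^j$ one has $\sum_j q_j\,j^s\kappa_\ell^{\,j}=(D^sQ)(\kappa_\ell)$. Hence, applied to the coordinate vector $\mathbf q$ of $Q$, the matrix $K_{0,m-1}(z)^{\mathsf T}$ returns precisely the vector of \emph{twisted Hermite data} $\big((D^sQ)(\kappa_\ell)\big)_{\ell,s}$, and likewise $K_{0,r-1}(z)^{\mathsf T}$ produces the twisted Hermite data of a polynomial of degree at most $r-1$.

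With this dictionary the identity becomes purely linear-algebraic. Writing $\Phi$ for the matrix of $\HH_z$ in the canonical bases of $\C_{m-1}[X]$ and $\C_{r-1}[X]$, so that $\mathbf p=\Phi\,\mathbf q$ where $\mathbf p$ is the coordinate vector of $p=\HH_z(Q)$, and using $(K_{0,m-1}(z)K_{0,r-1}(z)^{-1})^{\mathsf T}=(K_{0,r-1}(z)^{\mathsf T})^{-1}K_{0,m-1}(z)^{\mathsf T}$, the claimed identity $\Phi=(K_{0,m-1}(z)K_{0,r-1}(z)^{-1})^{\mathsf T}$ is equivalent to
\[
    K_{0,r-1}(z)^{\mathsf T}\,\mathbf p = K_{0,m-1}(z)^{\mathsf T}\,\mathbf q .
\]
By the previous paragraph this is nothing but the assertion that $p$ and $Q$ carry the same twisted Hermite data, that is $(D^sp)(\kappa_\ell)=(D^sQ)(\kappa_\ell)$ for every $\ell$ and every $s=0,\dots,\mult_\ell-1$.

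The hard part, and the only point that genuinely requires an argument, is to pass from the \emph{standard} Hermite conditions defining $\HH_z$ to these twisted ones, especially in the confluent case $\mult_\ell>1$. By definition $p=\HH_z(Q)$ satisfies $p^{(t)}(\kappa_\ell)=Q^{(t)}(\kappa_\ell)$ for $t=0,\dots,\mult_\ell-1$. I would then expand the twisted operator in ordinary derivatives, $D^s=\sum_{t=0}^{s}S(s,t)\,\kappa^t\partial_\kappa^{\,t}$, where the $S(s,t)$ are the Stirling numbers of the second kind; this is a triangular change with $S(s,s)=1$. Evaluating at $\kappa=\kappa_\ell$ and using that $\kappa_\ell\neq 0$ (indeed $\PP_z(0)=a_{-r}\neq0$, so $0$ is never a root of~\eqref{eq2:eqcharac}), each $(D^s\,\cdot\,)(\kappa_\ell)$ is a fixed linear combination of $(\partial_\kappa^{\,t}\,\cdot\,)(\kappa_\ell)$ for $t\le s$; hence the equalities of ordinary derivatives up to order $s$ force $(D^sp)(\kappa_\ell)=(D^sQ)(\kappa_\ell)$, which is exactly the twisted matching required. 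The same triangular relation, read in the reverse direction, shows that the twisted Hermite problem on $\C_{r-1}[X]$ is equivalent to the classical one and thus well posed, which justifies the invertibility of $K_{0,r-1}(z)$ implicit in the statement; here again the non-vanishing of every $\kappa_\ell$ is precisely what makes the two families of interpolation conditions interchangeable.
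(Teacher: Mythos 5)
Your proof is correct and follows essentially the same route as the paper's: the triangular Stirling-number change of basis between the twisted data $\big((D^sQ)(\kappa_\ell)\big)$ and the ordinary Hermite data $\big(Q^{(t)}(\kappa_\ell)\big)$ is exactly the invertible factor $M(z)$ by which the paper passes from $K_{0,j}(z)$ to the confluent Vandermonde matrix $H_{0,j}(z)$ before cancelling it in the product. You merely make explicit (including the role of $\kappa_\ell\neq 0$, guaranteed by $a_{-r}\neq 0$) what the paper asserts without detail.
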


\begin{proof}
The Hermite interpolation make appear the following matrix
\begin{center}
    \begin{tikzpicture}
        \node (A) at (-0.3,0) {
            $H_{0,j}(z) = \begin{pmatrix}
                1& 0& 0& \cdots                         &  1           & 0      & \cdots    & \cdots & 1        & 0 & \cdots  \\
    \kappa_1    &   1   & 0      & \cdots  &\kappa_2    & 1 & \cdots     & \cdots & \kappa_\nbrmult& 1 & \cdots  \\
    \kappa_1^2 & 2\kappa_1    & 2  & \cdots &  \kappa_2^2 & 2\kappa_2   & \cdots & \cdots & \kappa_\nbrmult^2   & 2 \kappa_\nbrmult & \cdots \\
    \kappa_1^3 & 3 \kappa_1^2   & 6 \kappa_1  & \cdots & \kappa_2^3 & 3 \kappa_2^2   & \cdots  & \cdots & \kappa_\nbrmult^3   & 3 \kappa_\nbrmult^2 &  \cdots \\
    \kappa_1^4 & 4 \kappa_1^3   & 12 \kappa_1^2 & \cdots  & \kappa_2^4 & 4 \kappa_2^3  & \cdots  & \cdots & \kappa_\nbrmult^4   & 4 \kappa_\nbrmult^3 & \cdots \\
    \vdots      & \vdots        & \vdots &      \cdots       & \vdots        & \vdots &      \cdots & \cdots &  \vdots &      \vdots & \cdots & ~ \\
    \kappa_1^{j} & j\kappa_1^{j-1} &j(j-1) \kappa_1^{j-2} & \cdots & \kappa_2^{j} & j\kappa_2^{j-1}  & \cdots & \cdots & \kappa_\nbrmult^{j}  &  j\kappa_\nbrmult^{j-1} & \cdots 
            \end{pmatrix}$.
        };
        \node (B) at (-2.85,-2.1) {$\underbrace{\phantom{00000000000000000000000000}}_{\mult_1 \text{ columns linked to }\kappa_1}$};
        \node (C) at (1.2,-2.1) {$\underbrace{\phantom{00000000000000}}_{\mult_2 \text{ columns linked to }\kappa_2}$};
        \node (D) at (4.9,-2.1) {$\underbrace{\phantom{00000000000000}}_{\mult_\nbrmult \text{ columns linked to }\kappa_\nbrmult}$};
    \end{tikzpicture}
\end{center}
The representation of $z\mapsto \HH_z$ in the canonical basis is $(H_{0,m-1}(z)H_{0,r-1}^{-1}(z))^T$. Besides, there exists an invertible matrix $M(z) \in \mathcal M_r(\C)$ such that $K_{0,j}(z) = H_{0,j}(z)M(z)$. Therefore, we have $K_{0,m-1}(z)K_{0,r-1}^{-1}(z) = H_{0,m-1}(z)M(z)M(z)^{-1}H_{0,r-1}^{-1}(z)=H_{0,m-1}(z)H_{0,r-1}^{-1}(z)$. The result follows.
\end{proof}

\begin{prop}\label{pr2:holomDKL}
    The function $z \mapsto \HH_z$ is holomorphic on $\U$.
\end{prop}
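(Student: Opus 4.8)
The plan is to identify the Hermite interpolation map $\HH_z$ with the operation of taking the remainder in the Euclidean division by the monic polynomial $\RR_z(X) = \prod_{j=1}^r (X - \kappa_j(z))$, and then to invoke Lemma~\ref{lem2:coeffholo}, which guarantees that the coefficients of $\RR_z$ are holomorphic on $\U$. Throughout, for $z\in\U$ the roots $\kappa_1(z),\dots,\kappa_\nbrmult(z)$ are the distinct roots in $\D$ with multiplicities $\mult_1,\dots,\mult_\nbrmult$ satisfying $\mult_1+\cdots+\mult_\nbrmult=r$, so the Hermite interpolation problem is well-posed.

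First I would prove the identification: for every $Q \in \C_{m-1}[X]$, the Hermite interpolant $\HH_z(Q)$ equals the remainder of $Q$ modulo $\RR_z$. Writing the Euclidean division $Q = A\,\RR_z + P$ with $\deg P < r$ (which is well defined since $\RR_z$ is monic), the claim is $P = \HH_z(Q)$. Since $\kappa_\ell(z)$ is a root of $\RR_z$ of order $\mult_\ell$, one has $\RR_z^{(k)}(\kappa_\ell(z)) = 0$ for all $k < \mult_\ell$; hence by the Leibniz rule $(A\,\RR_z)^{(k)}(\kappa_\ell(z)) = 0$ for $k < \mult_\ell$, so that $P^{(k)}(\kappa_\ell(z)) = Q^{(k)}(\kappa_\ell(z))$ for every $\ell$ and every $k < \mult_\ell$. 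Thus $P$ is a polynomial of degree less than $r$ matching all the prescribed Hermite data; by uniqueness of the Hermite interpolant (the total number $\mult_1 + \cdots + \mult_\nbrmult = r$ of conditions equals $\dim \C_{r-1}[X]$) we conclude $P = \HH_z(Q)$.

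Next I would read off holomorphicity from this algebraic description. Write $\RR_z(X) = X^r + c_{r-1}(z) X^{r-1} + \cdots + c_0(z)$; by Lemma~\ref{lem2:coeffholo} each $c_j$ is holomorphic on $\U$. The relation $X^r \equiv -\bigl(c_{r-1}(z)X^{r-1} + \cdots + c_0(z)\bigr) \pmod{\RR_z}$ lets one compute $X^k \bmod \RR_z$ for every $k \geq r$ by the recurrence $X^{k+1} \equiv X \cdot (X^k \bmod \RR_z) \pmod{\RR_z}$, reducing once the degree reaches $r$. An immediate induction shows that the coefficients of $X^k \bmod \RR_z$ are polynomial expressions in $c_0(z), \dots, c_{r-1}(z)$. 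Since the monomials $(X^k)_{0 \le k \le m-1}$ form the canonical basis of $\C_{m-1}[X]$ and $\HH_z$ is linear, the matrix of $\HH_z$ in the canonical bases has entries that are polynomials in the $c_j(z)$, hence holomorphic on $\U$ as compositions of holomorphic maps with polynomials.

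The main obstacle is the identification in the first step: one must handle the multiplicities correctly, that is, verify through the Leibniz rule that the remainder inherits not only the values but all derivatives up to order $\mult_\ell - 1$ at each root, and then invoke uniqueness of the Hermite interpolant. Once this algebraic reformulation is in place, holomorphicity is an essentially formal consequence of Lemma~\ref{lem2:coeffholo}, since the remainder map depends polynomially on the holomorphic coefficients of the monic polynomial $\RR_z$.
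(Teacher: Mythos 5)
Your proof is correct, but it takes a genuinely different route from the paper's. You identify $\HH_z(Q)$ with the remainder of $Q$ in the Euclidean division by the monic polynomial $\RR_z$ (the Leibniz-rule verification of the derivative conditions at each $\kappa_\ell(z)$ and the uniqueness argument are both sound), and then observe that the coefficients of $X^k \bmod \RR_z$ are polynomial expressions in the coefficients of $\RR_z$, so that holomorphy is inherited directly from Lemma~\ref{lem2:coeffholo}. The paper instead starts from the contour-integral error formula for Hermite interpolation, $\HH_z(X^k)(x) - x^k = \frac{1}{2i\pi}\int_{\S} \frac{\zeta^k \RR_z(x)}{(x-\zeta)\RR_z(\zeta)}\,d\zeta$, differentiates it at $x=0$ via the Leibniz rule to get an explicit formula~\eqref{eq2:coeffKK} for each matrix entry $\alpha_{j,k}(z)$, and concludes by holomorphy of parameter-dependent integrals together with Lemma~\ref{lem2:coeffholo}. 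Your algebraic argument is more elementary and arguably cleaner for this particular proposition; it also makes the continuity of $z\mapsto\HH_z$ on $\overline{\U}$ (Proposition~\ref{pr2:contDKL}) immediate once the coefficients of $\RR_z$ are known to extend continuously, and it essentially anticipates the computational reformulation of Section~\ref{sec2:numerical}: the reduction modulo $\RR_z$ is exactly the Gaussian elimination of Proposition~\ref{pr2:constructionB} leading to formula~\eqref{eq2:newformuladetKL}. What the paper's integral representation buys in exchange is quantitative control: the same formula~\eqref{eq2:coeffKK} is reused with Rouch\'e and Gauss--Lucas estimates to prove the uniform bound of Proposition~\ref{pr2:borneDKL} for $|z|$ large, which is what pins down the order-$r$ pole at infinity in the proof of Corollary~\ref{thrm2:nbrzerodet}; with your approach that boundedness would require a separate (though not difficult) argument on the behaviour of the coefficients of $\RR_z$ as $|z|\to\infty$.
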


\begin{proof}
    For all $k \in \interval{0}{m-1}$, we want every coefficient of the polynomial $\HH_z(X^k)$ to be holomorphic on $\U$. Writing $\HH_z(X^k)(x) = \sum_{j=0}^{r-1} \alpha_{j,k}(z) x^j$, we know that \begin{equation}\label{eq2:coeffderiv}\forall j \in \interval{0}{r-1},\quad j! \alpha_{j,k}(z) = \partial_x^j \HH_z(X^k)(x)_{|x=0}.\end{equation}
    By the error of Hermite interpolation (see \cite{Hermite1877}), we have
    \begin{equation}\label{eq2:Hermiteerror}
        \HH_z(X^k)(x) - x^k = \dfrac{1}{2i\pi} \int_{\S} \dfrac{\zeta^k \RR_z(x)}{(x - \zeta)\RR_z(\zeta)}d\zeta\end{equation}
    where $\RR_z(X)$ is defined in Lemma~\ref{lem2:coeffholo}.
    Differentiating equation~\eqref{eq2:Hermiteerror} (with the Leibniz product rule), one obtains
    \begin{equation}\label{eq2:coeffKK}
        j! \alpha_{j,k}(z)  = k! \delta_{k}^j + \sum_{s=0}^j \binom{j}{s} \RR_z^{(j-s)}(0) \dfrac{1}{2i\pi} \int_{\S} \dfrac{- s! ~\zeta^{k-s-1}}{\RR_z(\zeta)}d\zeta.
    \end{equation}
    By Lemma~\ref{lem2:coeffholo} and the holomorphicity of parameter-dependent integrals, the function $z \mapsto \alpha_{j,k}(z)$ is holomorphic on $\U$ for all $j\in \interval{0}{r-1}$ and $k\in \interval{0}{m-1}$. The proof is now complete.
\end{proof}

\begin{prop}\label{pr2:contDKL}
    The function $z \mapsto \HH_z$ is continuous on $\overline{\U}$.
\end{prop}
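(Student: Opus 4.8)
The plan is to prove continuity of $z\mapsto\HH_z$ on the closure $\overline{\U}$ by mirroring the holomorphicity argument of Proposition~\ref{pr2:holomDKL}, replacing "holomorphic parameter integral" by "continuous parameter integral." As in that proof, it suffices to show that for every $k\in\interval{0}{m-1}$ each coefficient $\alpha_{j,k}(z)$ of the polynomial $\HH_z(X^k)$ is continuous on $\overline{\U}$, since continuity of the map into $\C_{r-1}[X]$ equipped with the (finite-dimensional) coefficient norm is exactly the continuity of all the $\alpha_{j,k}$. I would reuse the explicit formula~\eqref{eq2:coeffKK} obtained from the Hermite interpolation error, namely
\[
j!\,\alpha_{j,k}(z) = k!\,\delta_k^j + \sum_{s=0}^j \binom{j}{s}\RR_z^{(j-s)}(0)\,\dfrac{1}{2i\pi}\int_{\S}\dfrac{-s!\,\zeta^{k-s-1}}{\RR_z(\zeta)}\,d\zeta,
\]
and argue continuity term by term.

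The two ingredients are the continuity of the map $z\mapsto\RR_z$ (equivalently, of its coefficients, hence of the derivatives $\RR_z^{(j-s)}(0)$) on $\overline{\U}$, and the continuity of the contour integrals. First I would note that Lemma~\ref{lem2:coeffholo} gives holomorphicity, hence continuity, of the coefficients of $\RR_z$ only on the open set $\U$; so the genuine work is to extend this continuity up to the boundary $\S$. I would invoke Theorem~\ref{th2:CoulombelHolo} here: since $\Es$ extends to a continuous vector bundle over $\overline{\U}$, the selection of the $r$ roots "coming from the inside" is continuous up to $\S$, so the elementary symmetric functions of $(\kappa_j(z))_{j=1}^r$ — that is, the coefficients of $\RR_z$ — extend continuously to $\overline{\U}$. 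This continuous extension is precisely what makes $\RR_z^{(j-s)}(0)$ continuous on $\overline{\U}$.

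For the integral terms I would argue by uniform convergence / dominated convergence on the fixed contour $\S$. The integrand $\zeta\mapsto \zeta^{k-s-1}/\RR_z(\zeta)$ depends continuously on $(z,\zeta)\in\overline{\U}\times\S$, provided $\RR_z(\zeta)$ stays bounded away from $0$ for $\zeta\in\S$. The denominator is nonzero because all roots of $\RR_z$ lie in $\D$ for $z\in\U$ (Lemma~\ref{lem2:hersh}), and on the boundary the continuous extension keeps the inside-roots in $\overline{\D}$; the key point needed is that a root of $\RR_z$ never reaches $\S$ in a way that makes the integrand singular on the contour. Here lies the main subtlety: when $z_0\in\S\cap\Gamma$, a root $\kappa_0(z_0)$ of the characteristic equation can sit exactly on $\S$, so $\RR_{z_0}$ could vanish on the integration contour and the integral formula~\eqref{eq2:coeffKK} would become singular. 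The hard part is to show that this apparent singularity is harmless: the boundary root of $\RR_{z_0}$ is compensated by the same factor appearing through $\RR_z^{(j-s)}(0)$ in the prefactor, so that $\alpha_{j,k}(z)$ itself stays continuous even though the individual integral may blow up. I would handle this by going back to the interpolation characterization of $\HH_z$ directly rather than to formula~\eqref{eq2:coeffKK}: the coefficients $\alpha_{j,k}(z)$ are, by Lemma~\ref{lem2:KKtranspo}, entries of $K_{0,m-1}(z)K_{0,r-1}^{-1}(z)$, and this matrix is intrinsic to the vector bundle $\Es$ (independent of the choice of basis), hence inherits the continuity of $\Es$ on $\overline{\U}$ granted by Theorem~\ref{th2:CoulombelHolo}.

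Concretely, I would finish as follows. For $z_0\in\U$, all three continuity statements follow immediately from Proposition~\ref{pr2:holomDKL} (holomorphic implies continuous). For $z_0\in\S$, I would combine the continuous extension of $\Es$ over $\overline{\U}$ (Theorem~\ref{th2:CoulombelHolo}) with Lemma~\ref{lem2:KKtranspo}, which identifies the matrix of $\HH_z$ with the transpose of $K_{0,m-1}(z)K_{0,r-1}^{-1}(z)$; since this latter quantity is defined intrinsically from the fibre $\Es$ and that fibre varies continuously up to the boundary, the entries vary continuously, giving continuity of each $\alpha_{j,k}$ and hence of $z\mapsto\HH_z$ at $z_0$. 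This establishes continuity on all of $\overline{\U}$ and completes the proof. I expect the delicate step to be justifying that the boundary behaviour of the Hermite interpolation (whether phrased through the possibly-singular integral~\eqref{eq2:coeffKK} or through the intrinsic matrix) is controlled precisely by the continuity of the stable bundle, so that the apparent loss of regularity at $z_0\in\S\cap\Gamma$ does not actually occur.
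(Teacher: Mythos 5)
Your diagnosis of the difficulty is exactly right: for $z_0\in\S\cap\Gamma$ a root of $\RR_{z_0}$ may sit on $\S$, so the contour integral in~\eqref{eq2:coeffKK} becomes singular on the integration path. But your resolution differs from the paper's. The paper keeps formula~\eqref{eq2:coeffKK} and simply replaces the contour $\S$ by $\Se=\{|\zeta|=1+\eps\}$: since all roots of $\RR_z$ lie in $\overline{\D}$ for every $z\in\overline{\U}$, the denominator $\RR_z(\zeta)$ never vanishes on $\Se$, the deformed integral agrees with the original one on $\U$ by Cauchy's theorem, and continuity then follows from continuity of parameter-dependent integrals together with the continuity of the coefficients of $\RR_z$ up to the boundary. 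This contour shift is the one elementary observation your write-up misses; with it, the ``apparent singularity'' you worry about never arises. Your alternative --- abandoning the integral at boundary points and deducing continuity of $K_{0,m-1}(z)K_{0,r-1}^{-1}(z)$ from the continuous extension of the bundle $\Es$ over $\overline{\U}$ (Theorem~\ref{th2:CoulombelHolo}) together with the basis-independence of that matrix --- is workable, but it is heavier (it imports the K-symmetrizer machinery behind Theorem~\ref{th2:CoulombelHolo} where a one-line contour deformation suffices) and leaves two details unaddressed: you must check that $K_{0,r-1}(z)$ remains invertible for $z\in\S$ (it does, being a confluent Vandermonde matrix at distinct nonzero nodes), and you must actually pass from ``the bundle admits continuous local frames'' to ``this particular coordinate matrix is continuous,'' since basis-independence alone does not confer continuity. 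Your intermediate suggestion that the singular integral is ``compensated'' by the prefactor $\RR_z^{(j-s)}(0)$ is not a correct mechanism and should be dropped; you rightly abandon it, but the paper's contour argument is the cleaner way to finish.
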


\begin{proof}
    Because Lemma~\ref{lem2:hersh} (Hersh) does not hold anymore for $z\in \S$, the roots $\kappa(z)$ of characteristic equation~\eqref{eq2:eqcharac} can be on the unit circle $\S$. To prove the continuity of $z \mapsto \alpha_{j,k}(z)$, we use equation~\eqref{eq2:coeffKK} but replacing $\S$ by $\Se \egdef  \{z\in \C, |z|=1+\eps\}$ for $\eps >0$. Using the continuity of parameter-dependent integrals and the continuity of the roots $\kappa(z)$ of characteristic equation~\eqref{eq2:eqcharac}, we obtain the continuity of the coefficients of $\RR_z$ and thus the function $z \mapsto \alpha_{j,k}(z)$ is continuous on $\overline{\U}$ for all $j\in \interval{0}{r-1}$ and $k\in \interval{0}{m-1}$. The proof is now complete.
\end{proof}

\begin{prop}\label{pr2:borneDKL}
    The function $z \mapsto \HH_z$ is bounded on $\overline{\U}$.
\end{prop}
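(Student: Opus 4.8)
The plan is to use the compactification of $\overline{\U}$ inside the Riemann sphere $\widehat{\C}=\C\cup\{\infty\}$. Since $\overline{\U}$ is unbounded, the continuity obtained in Proposition~\ref{pr2:contDKL} does not by itself give boundedness: the only possible obstruction is the behaviour of $\HH_z$ as $|z|\to\infty$. I would therefore prove that $z\mapsto\HH_z$ admits a finite limit as $z\to\infty$, so that it extends to a continuous map on the compact set $\overline{\U}\cup\{\infty\}\subset\widehat{\C}$. Boundedness then follows immediately, because $\HH_z$ takes values in the finite-dimensional space $\mathcal L(\C_{m-1}[X],\C_{r-1}[X])$, on which any operator norm is continuous, and a continuous function on a compact set is bounded.

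First I would control the $r$ roots of the characteristic equation~\eqref{eq2:eqcharac} lying in $\D$ as $z\to\infty$. Balancing the dominant terms of $\PP_z(\kappa)$ for small $\kappa$, these interior roots satisfy $\kappa^r\sim a_{-r}/z$, hence $\kappa_j(z)\to 0$. Consequently the monic polynomial $\RR_z$ of Lemma~\ref{lem2:coeffholo}, whose coefficients are (up to sign) the elementary symmetric functions of these roots, converges coefficientwise to $X^r$ as $z\to\infty$. In particular $\RR_z(\zeta)\to\zeta^r$ uniformly on $\S$, and since $|\zeta^r|=1$ there, the denominator $\RR_z(\zeta)$ stays bounded away from $0$ on $\S$ for $|z|$ large.

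With this at hand I would pass to the limit $z\to\infty$ in the explicit formula~\eqref{eq2:coeffKK} for the coefficients $\alpha_{j,k}(z)$ of $\HH_z(X^k)$. The uniform convergence $\RR_z\to X^r$ together with the denominator staying away from $0$ on $\S$ justifies exchanging limit and integral; moreover $\RR_z^{(j-s)}(0)\to 0$ because $j-s\<r-1<r$. Hence the whole sum in~\eqref{eq2:coeffKK} tends to $0$ and $\alpha_{j,k}(z)\to\delta_k^j$, so that $\HH_z$ converges to the truncation operator $Q\mapsto\sum_{j=0}^{r-1}\tfrac{Q^{(j)}(0)}{j!}X^j$. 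This furnishes the sought continuous extension to $\infty$, and the boundedness on $\overline{\U}$ follows.

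The main obstacle is the justification of this limit at infinity: one has to establish rigorously that the interior roots coalesce at $0$ (so that $\RR_z\to X^r$) and that the contour integrals in~\eqref{eq2:coeffKK} converge, which rests on the uniform lower bound for $|\RR_z(\zeta)|$ on $\S$ for large $|z|$. Once the finite limit $\HH_\infty$ is identified, the conclusion is immediate by compactness; note that this argument simultaneously re-proves continuity on $\overline{\U}$ and reduces boundedness to the single extra point at infinity.
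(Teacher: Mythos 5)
Your proposal is correct and takes essentially the same route as the paper: both arguments rest on the explicit formula~\eqref{eq2:coeffKK}, on localizing the $r$ interior roots near $0$ for $|z|$ large (via Rouché, exactly as in the proof of Lemma~\ref{lem2:hersh}) so that $|\RR_z(\zeta)|$ is bounded below on $\S$, and on Proposition~\ref{pr2:contDKL} to handle the remaining compact annulus $\{1\<|z|\<R\}$. The only real difference is that you identify the actual limit of $\HH_z$ at infinity (the Taylor truncation at $0$, with $\RR_z\to X^r$ and $\RR_z^{(j-s)}(0)\to 0$) and conclude by compactness of $\overline{\U}\cup\{\infty\}$, whereas the paper merely bounds each term for $|z|>R$, invoking Gauss--Lucas to control $\RR_z^{(j-s)}(0)$; the two variants need the same underlying estimates.
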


\begin{proof}
    Equation~\eqref{eq2:coeffKK} can give a bound of every components of $\H_{0,m-1}(z) \H_{0,r-1}^{-1}(z)$. Indeed, using Rouché's theorem, as in the proof of Lemma~\ref{lem2:hersh} (Hersh), we can see that for $|z|>R$ for a certain~$R$, all the roots $\kappa(z)$ of the characteristic equation~\eqref{eq2:eqcharac} satisfy $|\kappa(z)|<\frac 1 2$. Then, for $|z|>R$, one can have
    $$\left |\dfrac{1}{2i\pi} \int_{\S} \dfrac{- s! ~\zeta^{k-s-1}}{\RR_z(\zeta)}d\zeta \right | = \left |\dfrac{s!}{2i\pi} \int_{0}^{2\pi} \dfrac{e^{i\theta (k-s-1)}}{\prod_{j=1}^r (e^{i\theta} - \kappa_j(z))}d\theta \right | \< \dfrac{s!}{2\pi} \int_{0}^{2\pi} \dfrac{1}{|1 - \frac 1 2 |^r} d\theta \< s! 2^r.$$
    By Gauss-Lucas theorem, the roots of all the derivatives of $\RR_z$ are in $\D$ for all $|z|>1$, it follows that $\RR_z^{(j-s)}(0)$ is bounded independently of $z$. Then for $|z|>R$, the quantity $\H_{0,m-1}(z) \H_{0,r-1}^{-1}(z)$ is bounded. Moreover, by Proposition~\ref{pr2:contDKL},  the quantity $\H_{0,m-1}(z) \H_{0,r-1}^{-1}(z)$ is bounded on the compact set $\{1\<|z|\<R\}$. The proof is now complete.
\end{proof}

\subsection{Conclusion}

\begin{proof}[Proof of Theorem~\ref{thrm2:mainholocont}]
    By Lemma~\ref{lem2:KKtranspo}, the continuity and holomorphicity properties of $z \mapsto \varphi_z$ provided in Propositions \ref{pr2:holomDKL} and \ref{pr2:contDKL} are shared by the function $z\mapsto K_{0,m-1}(z)K_{0,r-1}^{-1}(z)$. The expression of the intrinsic determinant~\eqref{eq2:DKLavecKK} concludes the proof.
\end{proof}
%
%
%
The next proof is close to the proof of the Corollary 15 of \cite{Boutin22}.
We reproduce it here for completeness.

\begin{proof}[Proof of Theorem~\ref{thrm2:nbrzerodet}]
    Let us define the following function
    \[\fun{\DKLindepinvers}{z}{\D^*}{\DKLindep(1/z)}{\C.}\]
    By Theorem~\ref{thrm2:mainholocont}, the function $\DKLindepinvers$ is meromorphic on $\D$ with one only pole in $0$ and is continuous on~$\overline{\D}\setminus \{0\}$.
    By Proposition~\ref{pr2:borneDKL}, the function $z\mapsto \H_{0,m-1}(1/z) \H_{0,r-1}(1/z)^{-1}$ is bounded on $\overline{\D}$, it follows that $0$ is a pole of order $r$ of the function $\DKLindepinvers$.
    The residue theorem applied on $\DKLindepinvers$ with the path $\S$ gives the following equality:
    \[\Ind_{\DKLindepinvers(\S)}(0) = \#\mathrm{zeros}_{\DKLindepinvers}(\D) - \#\mathrm{poles}_{\DKLindepinvers}(\D).\]
    It follows that 
    \[\#\mathrm{zeros}_{\DKLindep}(\U) = r - \Ind_{\DKLindep(\S)}(0).\]
\end{proof}


\section{Numerical results}\label{sec2:numerical}

\subsection{New formulation of $\DKLindep$}

In~\cite{Boutin22}, an explicit formula of the Kreiss-Lopatinskii determinant is given. Unfortunately to reduce the boundary matrix $\Bbord$, the characteristic equation of degree $r+p$ is used to find a final matrix of size $r \times (r+p)$ and, since in the present analysis $p\neq0$, the matrix is not square. To skirt that problem  we will use the polynomial $\RR_z$ defined in Lemma~\ref{lem2:coeffholo} instead of using the complete characteristic equation~\eqref{eq2:poleqcharacP}. It reads also
\[\RR_z(X) = \prod_{j=1}^{r} (X-\kappa_j(z)) = X^r + \coefpol_{r-1}(z) X^{r-1} + \cdots + \coefpol_1(z) X + \coefpol_0(z)\]
where $(\coefpol_j(z))_j$ are the symmetric functions of $(\kappa_j(z))_j$.

Because $\widetilde{U}_j(z)$ is in $\Es$ and can be expressed in the basis~\eqref{eq2:basis}, we have, for all $j\in \N$,
\begin{equation}\label{eq2:eqcharacRz}\widetilde{U}_{j+r}(z) + \coefpol_{r-1}(z) \widetilde{U}_{j+r-1}(z) + \cdots + \coefpol_1(z) \widetilde{U}_{j+1}(z) + \coefpol_0(z)\widetilde{U}_{j}(z) = 0.\end{equation}

\noindent\textbf{Notation.} We note, for all $j \in \N$, $\Uvec{j}$ the vector $(\widetilde{U}_0(z) \cdots \widetilde{U}_{j}(z) )^T$ of size $j+1$.

\begin{prop}\label{pr2:constructionB}Let $\Bbord \in \mathcal M_{r,m}(\C)$. There exists a function $\widetilde{\Bbord} : \C^r \to \mathcal M_{r,r}(\C)$  
constructible such that, for all $z \in \overline{\U}$, we have
\begin{equation}\label{eq2:transfoalgo}\Bbord \Uvec{m-1}= \widetilde{\Bbord}(\sigma_0(z), \dots, \sigma_{r-1}(z)) \Uvec{r-1}
\end{equation}
where $(\widetilde{U}_j(z))_j$ satisfies \eqref{eq2:eqcharacRz} for all $j\in \N$.
\end{prop}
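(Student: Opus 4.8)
The plan is to exploit the order-$r$ linear recurrence~\eqref{eq2:eqcharacRz} satisfied by the sequence $(\widetilde{U}_j(z))_{j\in\N}$, whose coefficients are precisely the symmetric functions $\sigma_0(z),\dots,\sigma_{r-1}(z)$ of the $r$ roots in $\D$. The whole point is that this recurrence is purely algebraic: it lets me express each entry $\widetilde{U}_k(z)$ with $k\>r$ as a linear combination of the first $r$ entries $\widetilde{U}_0(z),\dots,\widetilde{U}_{r-1}(z)$, with coefficients that are polynomials in the $\sigma_j(z)$ alone. Assembling these combinations into a matrix and composing with $\Bbord$ will yield the desired $\widetilde{\Bbord}$.

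Concretely, I would define by induction on $k\in\interval{0}{m-1}$ a row vector $t_k\in\mathcal M_{1,r}(\C)$, with entries polynomial in $(\sigma_0,\dots,\sigma_{r-1})$, such that $\widetilde{U}_k(z)=t_k(\sigma_0(z),\dots,\sigma_{r-1}(z))\,\Uvec{r-1}$. For $0\<k\<r-1$ one simply takes $t_k=e_{k+1}^{T}$, the transpose of the $(k+1)$-th canonical basis vector, so that the identity is tautological. For $k\>r$, rewriting~\eqref{eq2:eqcharacRz} as $\widetilde{U}_k=-\sigma_{r-1}\widetilde{U}_{k-1}-\cdots-\sigma_0\widetilde{U}_{k-r}$ and applying the induction hypothesis to $\widetilde{U}_{k-1},\dots,\widetilde{U}_{k-r}$ yields the recursion $t_k=-\sigma_{r-1}t_{k-1}-\cdots-\sigma_0 t_{k-r}$, which is again polynomial in the $\sigma_j$. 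Stacking the rows $t_0,\dots,t_{m-1}$ produces a matrix $T(\sigma_0,\dots,\sigma_{r-1})$, whose top $r\times r$ block is $I_r$, satisfying
\[\Uvec{m-1}=T(\sigma_0(z),\dots,\sigma_{r-1}(z))\,\Uvec{r-1},\qquad T\in\mathcal M_{m,r}(\C).\]

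It then suffices to set $\widetilde{\Bbord}(\sigma_0,\dots,\sigma_{r-1})=\Bbord\,T(\sigma_0,\dots,\sigma_{r-1})\in\mathcal M_{r,r}(\C)$, so that $\Bbord\Uvec{m-1}=\Bbord\,T\,\Uvec{r-1}=\widetilde{\Bbord}\,\Uvec{r-1}$, which is the asserted identity. The map $\widetilde{\Bbord}$ is constructible because the inductive definition of the rows $t_k$ is an explicit finite algorithm in the coefficients $(\sigma_j)$, with $m$ and $\Bbord$ being fixed data.

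There is no deep obstacle here; the construction is entirely formal, and the modest care required is to keep the coefficients expressed in the $\sigma_j$ rather than in the individual roots. The one point worth a remark is the validity on all of $\overline{\U}$, including $z\in\S$, where the bundle $\Es$ is only continuous and Lemma~\ref{lem2:hersh} (Hersh) fails: since the argument never refers to the roots $\kappa_j(z)$ but only to the relation~\eqref{eq2:eqcharacRz} and to the symmetric functions $\sigma_j(z)$ — which extend continuously to $\overline{\U}$ by Lemma~\ref{lem2:coeffholo} and the proof of Proposition~\ref{pr2:contDKL} — the identity $\Uvec{m-1}=T\,\Uvec{r-1}$ holds verbatim on the unit circle, and so does the conclusion.
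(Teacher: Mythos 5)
Your proof is correct and is essentially the paper's argument: both use the order-$r$ recurrence \eqref{eq2:eqcharacRz} to eliminate the entries $\widetilde U_r,\dots,\widetilde U_{m-1}$ by an explicit polynomial-in-$(\sigma_j)$ elimination, and your matrix $T$ is exactly the product $\Pbord_{m-1}\cdots\Pbord_r$ of the paper's elimination matrices, so $\widetilde{\Bbord}=\Bbord T$ coincides with the paper's $\Bbord_{r-1}$. The only difference is cosmetic (ascending construction of the transfer matrix versus the paper's descending accumulation onto $\Bbord$), and your closing remark about validity on all of $\overline{\U}$ is apt since the identity uses only the recurrence, not the roots themselves.
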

\noindent By "constructible function", we mean here that we establish a computable algorithm to get the matrix $\widetilde{\Bbord}(\sigma_0(z), \dots, \sigma_{r-1}(z))$. This algorithm, based on a Gaussian elimination, is fully described in the following proof.

\begin{proof} For $z\in \overline{\U}$ and $\varsigma_0 = \sigma_0(z), \dots, \varsigma_{r-1} = \sigma_{r-1}(z)$.
    By a descending induction on $j$ between $m-1$ to $r-1$, we construct a matrix $\Bbord_j(\varsigma_0, \dots, \varsigma_{r-1})\in \mathcal M_{r,j+1}(\C)$ such that  
    \begin{equation*}\Bbord \Uvec{m-1}= \Bbord_j(\varsigma_0, \dots, \varsigma_{r-1}) \Uvec{j}.
    \end{equation*}
    \emph{Initialization:} if $j = m-1$ then one can take $\Bbord$ for the matrix $\Bbord_{m-1}(\varsigma_0, \dots, \varsigma_{r-1})$.\\ 
    \emph{Induction:} we assume the induction hypotheses for some $j \in \interval{m-1}{r}$ and we want to prove the result for $j-1$. By equation~\eqref{eq2:eqcharacRz}, we have $\Uvec{j} = \Pbord_j \Uvec{j-1}$ where 
    \[\Pbord_j =  \begin{pmatrix}1 & & & \\ &\ddots & & \\ & & \ddots& \\ & & & 1\\ (0)& -\varsigma_0& \cdots & -\varsigma_{r-1}\end{pmatrix} \in \mathcal M_{j+1,j}(\C).\] We define $\Bbord_{j-1}(\varsigma_0, \dots, \varsigma_{r-1})= \Bbord_j(\varsigma_0, \dots, \varsigma_{r-1}) \Pbord_j \in \mathcal M_{r,j}(\C)$ then we have 

    \[\Bbord_{j-1} \Uvec{j-1} = \Bbord_j \Pbord_j\Uvec{j-1} = \Bbord_j\Uvec{j} = \Bbord \Uvec{m-1}.\]
    \noindent\emph{Conclusion:} we define $\widetilde{\Bbord}$ by $\Bbord_{r-1}$.\\
    The function $\widetilde{\Bbord}$ is easily computable because $(\Pbord_j)_j$ are just matrices of Gaussian elimination.
\end{proof}

By~\eqref{eq2:transfoalgo}, the intrinsic Kreiss-Lopatinskii determinant can be written 
\begin{equation}\label{eq2:newformuladetKL}\DKLindep(z) = \dfrac{\det(zK_{0,r-1}(z) - \Bbord K_{0,m-1}(z))}{\det K_{0,r-1}(z)} = \dfrac{\det(zK_{0,r-1}(z) - \widetilde{\Bbord} K_{0,r-1}(z))}{\det K_{0,r-1}(z)}  = \det(zI_r - \widetilde{\Bbord}).
\end{equation}
The matrix $\widetilde{\Bbord}$ from Proposition~\ref{pr2:constructionB} depends on coefficients $(\sigma_j(z))_j$. By~\eqref{eq2:transfoalgo}, we have \[\widetilde{\Bbord}(\coefpol_0(z), \dots, \coefpol_{r-1}(z)) = \Bbord K_{0,m-1}(z)K_{0,r-1}^{-1}(z).\]
Using any computer algebra system, we can compute the matrix $\widetilde{\Bbord}(\varsigma_0, \dots, \varsigma_{r-1})$ from the matrix~$\Bbord$ and then compute the coefficients $(\coefpol_{j}(z))_j$ and replace $\varsigma_j$ by $\sigma_j(z)$ for all $j \in \interval{0}{r-1}$. It provides that the computation of $\widetilde{\Bbord}(\varsigma_0, \dots, \varsigma_{r-1})$ can be done only once and then apply for different~$z$ (and so different  $(\coefpol_{j}(z))_j$).

With \eqref{eq2:newformuladetKL}, we again find the holomorphic property of $\DKLindep$ by Lemma~\ref{lem2:coeffholo} which states that $z \mapsto \coefpol_j(z)$ is holomorphic on $\U$ for all $j \in \interval{0}{r-1}$. 

\subsection{Computation of $\DKLindep(\S)$}\label{sec2:numDeltaselection}

Let us fix a $z_0 \in \S$. To compute $(\coefpol_{j}(z_0))_j$, we need the $r$ roots $(\kappa_j(z_0))_j$ that come from the inside of the unit disk, see Remark~\ref{rem2:rootsfrominsideoutside}. 
By the continuity of the roots of polynomial $P_{z_0}$ defined in~\eqref{eq2:poleqcharacP} with respect to the parameter $z_0$, for each $\kappa_0(z_0)$ of multiplicity $\mult$ on the unit circle, for a sufficiently small $\varepsilon>0$, there exists $\eta>0$ such that for all $z \in B(z_0,\eta)$, the polynomial $P_z$ has exactly $\mult$ roots with multiplicity in  $B(\kappa_0(z_0), \varepsilon)$. The explicit value of $\eta$ is given in the following statement.

\begin{lemma}\label{lem2:rootcounting}
    Let $z_0$ be on the unit circle. Let $\kappa_0(z_0)\in \S$ be a root of multiplicity $\mult$ of the polynomial~$P_{z_0}$ defined  in~\eqref{eq2:poleqcharacP}.
    Let $\varepsilon>0$ be such that $\kappa_0(z_0)$ is the only root of $P_{z_0}$ in $\overline{B(\kappa_0(z_0), \varepsilon)}$ and set \[\eta = (1+\varepsilon)^{-r}\underset{\kappa \in \partial B(\kappa_0(z_0), \varepsilon)}{\min} |P_{z_0}(\kappa)|.\]
    Then for all $z \in B(z_0, \eta)$, the polynomial $P_z$ has exactly $\mult$ roots with multiplicity in  $B(\kappa_0(z_0), \varepsilon)$.
\end{lemma}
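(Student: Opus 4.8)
The plan is to use Rouché's theorem to compare the number of roots of $P_{z_0}$ and $P_z$ inside the disk $B(\kappa_0(z_0),\varepsilon)$. Since $\kappa_0(z_0)$ is the only root of $P_{z_0}$ in $\overline{B(\kappa_0(z_0),\varepsilon)}$ and it has multiplicity $\mult$, the polynomial $P_{z_0}$ has exactly $\mult$ roots (with multiplicity) in that disk. If I can show that $|P_z(\kappa)-P_{z_0}(\kappa)| < |P_{z_0}(\kappa)|$ on the boundary circle $\partial B(\kappa_0(z_0),\varepsilon)$, then Rouché's theorem guarantees that $P_z$ also has exactly $\mult$ roots in $B(\kappa_0(z_0),\varepsilon)$, which is the claim.

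The key observation that makes this work is that $P_z$ and $P_{z_0}$ differ only through the coefficient of $\kappa^r$: inspecting the definition~\eqref{eq2:poleqcharacP}, the $z$-dependence enters solely in the term $(a_0-z)\kappa^r$. Consequently, for any $\kappa$ we have the clean identity
\[
    P_z(\kappa) - P_{z_0}(\kappa) = (z_0 - z)\kappa^r,
\]
so that $|P_z(\kappa)-P_{z_0}(\kappa)| = |z-z_0|\,|\kappa|^r$. This is the crucial simplification: the perturbation is a single monomial whose size is controlled linearly by $|z-z_0|$.

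Now I would bound $|\kappa|^r$ on the relevant circle. For $\kappa \in \partial B(\kappa_0(z_0),\varepsilon)$ we have $|\kappa| \< |\kappa_0(z_0)| + \varepsilon = 1+\varepsilon$ since $\kappa_0(z_0)\in\S$, so $|\kappa|^r \< (1+\varepsilon)^r$. Therefore, for any $z\in B(z_0,\eta)$,
\[
    |P_z(\kappa)-P_{z_0}(\kappa)| = |z-z_0|\,|\kappa|^r < \eta\,(1+\varepsilon)^r = \underset{\kappa' \in \partial B(\kappa_0(z_0), \varepsilon)}{\min}|P_{z_0}(\kappa')| \< |P_{z_0}(\kappa)|,
\]
where the middle equality is exactly the definition of $\eta$, and the final inequality holds because $\kappa$ lies on the circle over which the minimum is taken. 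This verifies the Rouché hypothesis on all of $\partial B(\kappa_0(z_0),\varepsilon)$, and the conclusion follows at once.

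I do not expect a genuine obstacle here, since the perturbation is an exact monomial and the value of $\eta$ has been reverse-engineered precisely to make the Rouché estimate tight. The only points requiring a small amount of care are confirming that the minimum of $|P_{z_0}|$ over the boundary circle is strictly positive (which holds because $P_{z_0}$ is nonzero on $\partial B(\kappa_0(z_0),\varepsilon)$, as $\kappa_0(z_0)$ is its only root in the closed disk) so that $\eta>0$, and noting that the strict inequality on the boundary is what licenses the application of Rouché's theorem to count roots with multiplicity.
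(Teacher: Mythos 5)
Your proof is correct and follows exactly the route the paper indicates: the paper itself only sketches this lemma, stating that it is "a consequence of Rouché's theorem, comparing the number of zeros between $P_{z_0}$ and $P_z$" on $B(\kappa_0(z_0),\varepsilon)$, and your write-up supplies precisely those details (the identity $P_z(\kappa)-P_{z_0}(\kappa)=(z_0-z)\kappa^r$, the bound $|\kappa|^r\<(1+\varepsilon)^r$ on the boundary circle, and the role of $\eta$). No discrepancy with the paper's argument.
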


The proof of Lemma~\ref{lem2:rootcounting} is a consequence of Rouché's theorem, comparing the number of zeros between $P_{z_0}$ and $P_z$ for $z$ close to $z_0$ in $B(\kappa_0(z_0), \eps)$, the details of the proof are not given here, but we refer to \cite{Lang13} for a proof of Rouché's theorem.
This lemma is illustrated in Figure~\ref{fig2:rootcounting} where $\kappa_0(z_0)$ is of multiplicity $3$. The black points are related to $z_0$ and $(\kappa_j(z_0))_j$, and the gray points are related to $z$ and $(\kappa_j(z))_j$.

\begin{figure}
    \centering
    \begin{tabular}{cc}
        Where $z$ lives & Where $\kappa$ lives \\
\begin{tikzpicture}
    \draw[white, fill] (-2.2,-2.2) rectangle (2.2,2.2);
    \draw(0,0) circle (2);

    \draw  (1.414,1.414) node{$+$};
    \draw (1.25,1.25) node{$z_0$};
    \draw[->,>=stealth, gray] (1.414,1.414) arc (310:320:2) ;
    \draw[gray]  (1.414+0.35/1.414,1.414+0.35/1.414) node{$+$};
    \draw[gray] (1.8,1.5) node{$z$};
    \draw[dashed] (1.414,1.414) circle (0.7);
    \draw  (0,2) node[below]{$\S$};
    \draw[<->] (1.414,1.414) -- (1.414,1.414+0.7);
    \draw (1.3,1.414+0.3) node{$\eta$};
\end{tikzpicture}

& \begin{tikzpicture}
    \draw[white, fill] (-2.2,-2.2) rectangle (4.2,2.2);
    \draw (0,0) circle (2);
    \draw  (0.4077563762011518,0.7969276234786205) node{$\times$} node[above right]{$\kappa(z_0)$};
    \draw[->,>=stealth, gray] (0.4077563762011518,0.7969276234786205) arc (10:30:2) ;
    \draw[gray]  (0.1,1.6) node{$\times$};
    \draw[->,>=stealth, gray] (0.4077563762011518,0.7969276234786205) arc (90:110:2) ;
    \draw[gray]  (-0.4,0.6) node{$\times$};
    \draw  (-1.5,-0.3) node{$\times$} node[above right]{$\kappa(z_0)$};
    \draw[->,>=stealth, gray] (-1.5,-0.3) arc (220:240:2) ;
    \draw[gray]  (-0.8,-0.8) node{$\times$};
    \draw  (1.414,-1.414) node{$\times$};
    \draw[->,>=stealth, gray] (1.414,-1.414) arc (15:32:2) ;
    \draw[->,>=stealth, gray] (1.414,-1.414) arc (360:350:4) ;
    \draw[->,>=stealth, gray] (1.414,-1.414) arc (290:310:2) ;
    \draw (1.414,-1.55) node{$\kappa_0(z_0)$};
    \draw[dashed] (1.414,-1.414) circle (0.9);
    \draw[<->] (1.414,-1.414) -- (1.414-0.9,-1.414);
    \draw (1.414-0.45,-1.3) node{$\eps$};
    \draw[gray] (1.3,-2.2) node{$\times$};
    \draw[gray] (1.1,-0.8) node{$\times$};
    \draw[gray] (2,-1) node{$\times$};
    \draw[->,>=stealth, gray] (1.9,1.5) arc (90:70:2) ;
    \draw[gray]  (2.7,1.3) node{$\times$} node[below]{$\kappa(z)$};
    \draw  (1.9,1.5) node{$\times$} node[above]{$\kappa(z_0)$};
\end{tikzpicture}
\end{tabular}
\caption{Illustration of Lemma~\ref{lem2:rootcounting}}\label{fig2:rootcounting}
\end{figure}
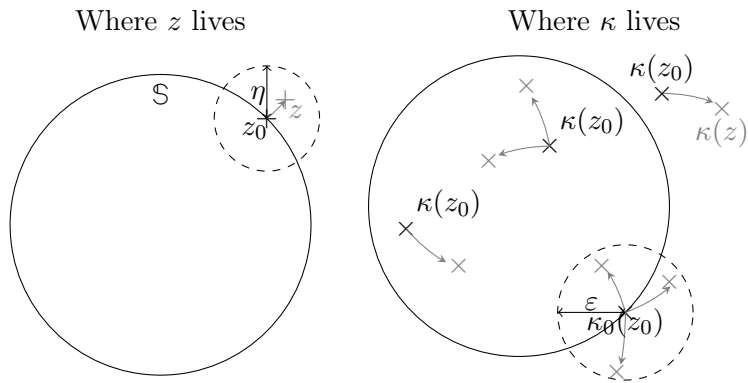

From the numerical point of view, for a multiple root $\kappa_0(z_0)$, one can take the smallest distance between two roots of $P_{z_0}$ as $\eps$, take $z = (1+ \frac \eta 2)z_0 \in \U$. The value of $\eta$  is obtained discretizing the circle of radius $\eps$ centered in $\kappa_0(z_0)$.
By Lemma~\ref{lem2:hersh} (Hersh), there is no roots of $P_z$ on the unit circle, then one can count the roots in $B(\kappa_0(z_0), \eps)\cap \D$ and $B(\kappa_0(z_0),\eps)\cap \U$ to know the number of roots linked to $\kappa_0(z)$ that come from the inside and the outside of the unit disk. 
After selecting the roots $(\kappa_j(z))_j$ that come from the inside of the unit disk, one may compute their symmetric functions $(\coefpol_{j}(z))_j$. By replacing the formal variables $(\varsigma_j)_j$ of $\widetilde{\Bbord}(\varsigma_0,\dots, \varsigma_{r-1})$ with $(\coefpol_{j}(z))_j$, one may compute $\DKLindep(z)$ with expression~\eqref{eq2:newformuladetKL}.
Instead of computing $(\kappa_j(z))_j$ for each $z$ on the unit circle independently, one may use the continuity of $(\kappa_j(z))_j$ with respect to $z$ in order to describe the movement of the roots $(\kappa_j(z))_j$ for $z\in \S$.
After drawing the Kreiss-Lopatinskii curve, the winding number has to be computed in order to use Method~\ref{proc2:numericalprocedure}. To do so, we use the geometric algorithm proposed by Garc\'ia Zapata and D\'iaz Mart\'in in \cite{Zapata12} and \cite{Zapata14}.

\subsection{Boundary condition: reconstruction procedure}\label{sec2:boundarycondition}
To define the boundary condition, we use the reconstruction procedure explained in \cite{Dakin18}. The framework is the advection equation with a misalignment between the space boundary and the discrete grid points
\begin{align}\label{eq2:advectionsigma}
    \begin{cases}
     \partial_t u + a \partial_x u = 0, &  t\>0, x \in [\borneinfintervalspatial,1],\\
    u(t,\borneinfintervalspatial) = g(t), & t\>0, \\
     u(0,x) = f(x), & x\in [\borneinfintervalspatial,1]. 
    \end{cases}
\end{align}
Without loss of generality, we can assume that $\borneinfintervalspatial = \sigma \dx$ with $\sigma \in [-\frac 1 2, \frac 1 2[$ (as it is explained in~\cite{Boutin22}). 
Let us introduce $x_j$ for $j\dx$ and $t^n$ for $n\dt$ when $j\>-r$ and $n\>0$. Let $n\in \N$ be a fixed time. The solution $u$ of~\eqref{eq2:advectionsigma} (assumed here to be smooth enough) satisfies
\begin{equation}\label{eq2:formintegsolu}
    \frac{1}{\dx} \int_{x_j-\frac{\dx}{2}}^{x_j+\frac{\dx}{2}} u(t^n,y)dy = \frac{1}{\dx} \int_{x_j-\frac{\dx}{2}}^{x_j+\frac{\dx}{2}} \sum_{k = 0}^{d-1} \partial_x^k u(t^n,\borneinfintervalspatial) \frac{(y-\borneinfintervalspatial)^k}{k!}dy + O(\dx^d)
\end{equation}
using a Taylor expansion of order $d$. 
Now, let us take a solution $(U_j^n)_{j\>0}$ of a scheme of the form~\eqref{eq2:stdscheme} approximating $u$.
Using \eqref{eq2:formintegsolu}, we want to define the $r$ ghost points $(U_j^n)_{-r\<j\<-1}$. The approximation of equation~\eqref{eq2:formintegsolu} reads, for $j\>-r$, 
\begin{equation}\label{eq2:reconstructionUjn}U_j^n \approx \sum_{k = 0}^{d-1} \partial_x^k u(t^n,\borneinfintervalspatial) \left (\dfrac{(j + \tfrac{1}{2}-\sigma)^{k+1}}{(k+1)!} - \dfrac{(j - \tfrac{1}{2}-\sigma)^{k+1}}{(k+1)!}\right ).
\end{equation}

On the one hand, we use the PDE to convert space derivatives into time derivatives until an index $k_d<d$. The index $k_d$ allows us to know only the first derivatives of the boundary datum $g$ and use extrapolation for the rest. For the advection equation, we have, for all $k\<k_d$,
\[\partial_x^k u(t^n,\borneinfintervalspatial) = (-a)^{-k}\partial_t^k u(t^n,\borneinfintervalspatial) = (-a)^{-k} g^{(k)}(t^n).\]
Equation~\eqref{eq2:reconstructionUjn} becomes, for $j\>-r$,
\begin{multline}\label{eq2:reconstructionUjn2}U_j^n \approx \sum_{k=0}^{k_d} (-a)^{-k}g^{(k)}(t^n) \left (\dfrac{(j + \tfrac{1}{2}-\sigma)^{k+1}}{(k+1)!} - \dfrac{(j - \tfrac{1}{2}-\sigma)^{k+1}}{(k+1)!}\right ) \\+\sum_{k = k_d+1}^{d-1} \partial_x^k u(t^n,\borneinfintervalspatial) \left (\dfrac{(j + \tfrac{1}{2}-\sigma)^{k+1}}{(k+1)!} - \dfrac{(j - \tfrac{1}{2}-\sigma)^{k+1}}{(k+1)!}\right ).
\end{multline}

On the other hand, we need to define $(\partial_x^ku(t^n,\borneinfintervalspatial))_{k=k_d+1}^{d-1}$, but using \eqref{eq2:reconstructionUjn2} for $j \in \interval{0}{d-k_d-2}$, we can deduce the unknowns $(U_j^n)_{-r\<j\<-1}$. Writing $\Uvecmoins = (U_{-r}^n,\dots, U_{-1}^n)^T$, $\Uvecplus = (U_0^n,\dots, U_{d-k_d-2}^n)^T$ and $\Theta^n = (\partial_x^{k_d+1} u(t^n,\borneinfintervalspatial), \dots, \partial_x^{d-1} u(t^n,\borneinfintervalspatial))^T$, we have a condensed formulation of~\eqref{eq2:reconstructionUjn2}:
\begin{equation}\label{eq2:systemTheta}\begin{cases}\begin{array}{rcl}\Uvecmoins & =&  \mathcal S_{-}^n + \mathcal Y_{-}\Theta^n, \\
    \Uvecplus &=&  \mathcal S_{+}^n + \mathcal Y_{+}\Theta^n,
\end{array}\end{cases}\end{equation}
where $\mathcal S_-^n \in \R^{r}$, $\mathcal S_+^n \in \R^{d-k_d-1}$, $\mathcal Y_- \in \mathcal M_{r,d-k_d-1}(\R)$ and $\mathcal Y_+ \in \mathcal M_{d-k_d-1}(\R)$ with
\[\begin{cases}
    (\mathcal S_-^n)_i =  \sum_{k=0}^{k_d} (-a)^{-k}g^{(k)}(t^n) \left (\dfrac{(-i + \tfrac{1}{2}-\sigma)^{k+1}}{(k+1)!} - \dfrac{(-i - \tfrac{1}{2}-\sigma)^{k+1}}{(k+1)!}\right ) &  \hspace{-9pt}  \text{for }i \in \interval{1}{r}\\
    (\mathcal S_+^n)_i =  \sum_{k=0}^{k_d} (-a)^{-k}g^{(k)}(t^n) \left (\dfrac{(i-1 + \tfrac{1}{2}-\sigma)^{k+1}}{(k+1)!} - \dfrac{(i-1 - \tfrac{1}{2}-\sigma)^{k+1}}{(k+1)!}\right ) & \hspace{-9pt}\text{for }i \in \interval{1}{d-k_d-1}\\
    (\mathcal Y_{-})_{i,j} = \left (\dfrac{(-(r-i+1) + \tfrac{1}{2}-\sigma)^{j+k_d+1}}{(j + k_d+1)!} - \dfrac{(-(r-i+1) - \tfrac{1}{2}-\sigma)^{j+k_d+1}}{(j+k_d+1)!}\right )& \hspace{-4mm}\begin{array}{l} \text{for }i \in \interval{1}{r},\\ j \in \interval{1}{d-k_d-1} \end{array} \\
    (\mathcal Y_{+})_{i,j} = \left (\dfrac{(i-1 + \tfrac{1}{2}-\sigma)^{j+k_d+1}}{(j+k_d+1)!} - \dfrac{(i-1 - \tfrac{1}{2}-\sigma)^{j+k_d+1}}{(j+k_d+1)!}\right ) &  \hspace{-1cm}  \text{for }i,j \in \interval{1}{d-k_d-1}
\end{cases}\]
%
%
Eliminating the space derivatives $\Theta^n$ in \eqref{eq2:systemTheta} gives us the following boundary condition:
\begin{equation}\label{eq2:boundaryreconstruction}\Uvecmoins  = \mathcal Y_- \mathcal Y_+^{-1}\Uvecplus +\mathcal S_-^n - \mathcal Y_- \mathcal Y_+^{-1}\mathcal S_+^n.\end{equation}
Equation~\eqref{eq2:boundaryreconstruction} is exactly the boundary equations \eqref{eq2:eqbord} of the scheme which define the $r$ ghost points of the scheme.
To write the boundary condition as equation \eqref{eq2:equationBordQuasiToep} with expression~\eqref{eq2:bordB}, we identify
\[
    B \overset{def}{=} \mathcal Y_- \mathcal Y_+^{-1} \text{ and }\begin{pmatrix}g_{-r}^n \\ \vdots \\ g_{-1}^n\end{pmatrix}\overset{def}{=} \mathcal S_-^n - \mathcal Y_- \mathcal Y_+^{-1}\mathcal S_+^n.
\]
As in \cite{Dakin18}, $\mathcal R^{d,k_d}$ denotes the reconstruction procedure where $d$ is the order of consistency of the method and $k_d$ the index when we change from time derivatives to extrapolation.
For example, the reconstruction procedure $\mathcal R^{3,0}$ for $r=2$ and $\sigma = 0.4$ leads to
\[\mathcal Y_{-} = \begin{pmatrix} -\frac{12}{5} & \frac{1753}{600} \\ -\frac{7}{5} & \frac{613}{600} \end{pmatrix}, \mathcal Y_{+} = \begin{pmatrix}-\frac{2}{5} & \frac{73}{600} \\ \frac{3}{5} & \frac{133}{600}  \end{pmatrix} \text{ and } B = \begin{pmatrix} \frac{1371}{97} & \frac{526}{97}  \\ \frac{554}{97} & \frac{143}{97} \end{pmatrix} \]

\subsection{Example of O3 scheme}

As it is done in \cite{Dakin18}, we want to find the stability area for the O3 scheme defined, for $j \in \N$ and $n\in \N$, by
\begin{equation}U_{j}^{n+1}  =  \left (\frac{\lambda^3}{6}  -  \frac{\lambda}{6}\right )U_{j-2}^n + \left (\lambda + \frac{\lambda^2}{2} - \frac{\lambda^3}{2}\right ) U_{j-1}^n + \left (1 - \frac{\lambda}{2} - \lambda^2 +\frac{\lambda^3}{2}\right ) U_j^n+\left (\frac{\lambda^2}{2} - \frac{\lambda^3}{6} - \frac{\lambda}{3}\right )U_{j+1}^n\end{equation}
The O3 scheme is a scheme with $r = 2$ and $p=1$ and is Cauchy-stable for $\lambda \in ]0,1]$.
%
%
The reconstruction $\mathcal R^{3,0}$ for the O3 scheme and $\sigma = 0.4$ leads to
\[B = \begin{pmatrix} \frac{1371}{97} & \frac{526}{97} & 0 \\ \frac{554}{97} & \frac{143}{97}&0 \end{pmatrix} \text{ and }\Bbord = \begin{pmatrix}\frac{180\lambda^2}{97} + \frac{277\lambda}{97} + 1& \frac{120\lambda^2}{97} + \frac{23\lambda}{97}&  0 \\ \frac{263\lambda^3}{582} + \frac{\lambda^2}{2} + \frac{14\lambda}{291}& \frac{217\lambda^3}{291} - \lambda^2 - \frac{217\lambda}{291} + 1& -\frac{\lambda^3}{6} + \frac{\lambda^2}{2} - \frac{\lambda}{3} \end{pmatrix}.\]
Using the reformulation~\eqref{eq2:newformuladetKL} of the Kreiss-Lopatinskii determinant, we have

\begin{align*}\widetilde{\Bbord}(\varsigma_0,\varsigma_1) & = \Bbord \begin{pmatrix}1 & 0 \\ 0  & 1 \\ - \varsigma_0 & -\varsigma_1\end{pmatrix}\\ & =\begin{pmatrix}\frac{180\lambda^2}{97} + \frac{277\lambda}{97} + 1& \frac{120\lambda^2}{97} + \frac{23\lambda}{97} \\ 
    \frac{(263+97\varsigma_0)\lambda^3}{582} + \frac{(1-\varsigma_0)\lambda^2}{2} + \frac{(14+97\varsigma_0)\lambda}{291}& 
    \frac{(434 + 97 \varsigma_1)\lambda^3}{582} - \frac{(2+\varsigma_1)\lambda^2}{2} - \frac{(217-97\varsigma_1)\lambda}{291} + 1\end{pmatrix}.
\end{align*}
%
%
For example, for $\sigma = 0.4$,  Figure~\ref{fig2:DKLO3R40} shows that the O3 scheme with $\mathcal R^{3,0}$ boundary is stable for $\lambda = 0.4$ (because $r - \Ind_{\DKLindep(\S)}(0) = 0$) and is unstable for $\lambda = 0.9$ (because $r - \Ind_{\DKLindep(\S)}(0) = 1$). 

\begin{figure}[!h]
    \includegraphics[trim = 2cm 1.4cm 2.5cm 2.4cm, clip, width = 10cm]{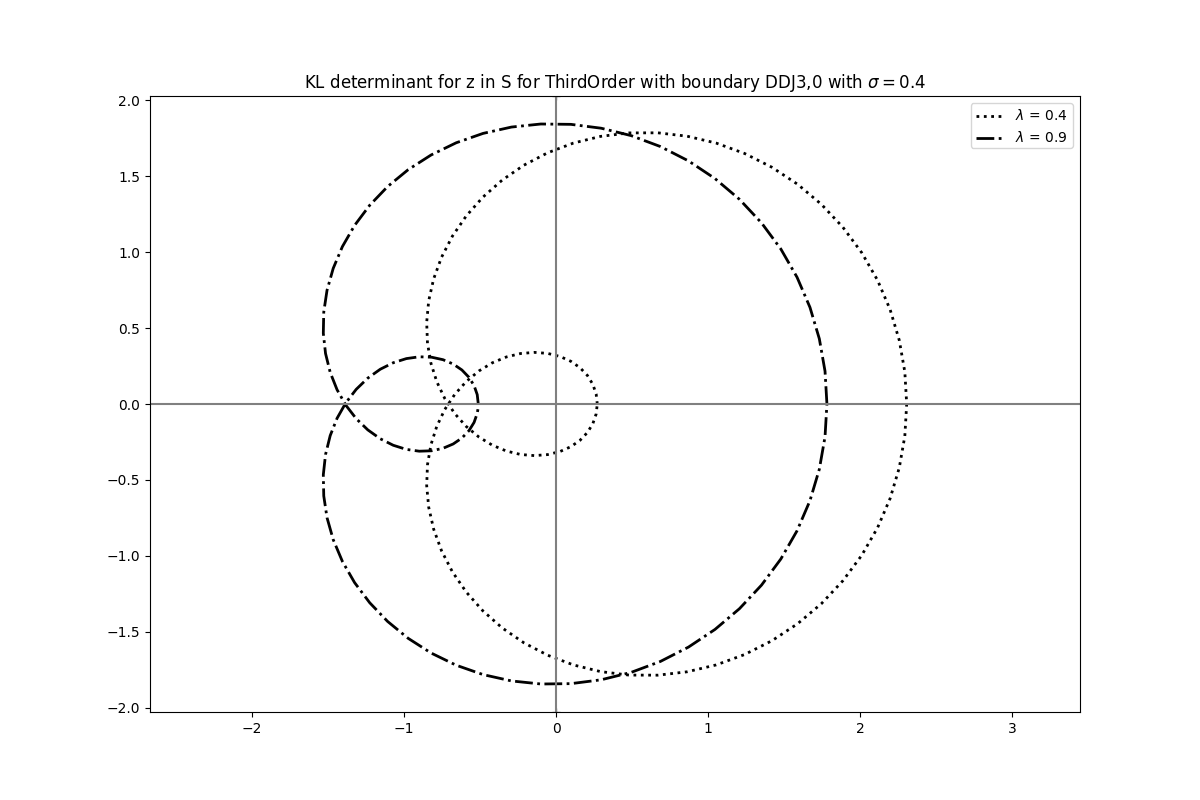}
    \caption{Curve $\DKLindep(\S)$ for O3 scheme for $\sigma = 0.4$, for $\lambda \in \{0.4,0.9\}$ with reconstruction boundary $\mathcal R^{3,0}$.\label{fig2:DKLO3R40}}
\end{figure}

\medskip

We can draw the same figure as the Figure 4 of \cite{Dakin18} but instead of using a computation of the spectral radius of the truncated quasi-Toeplitz matrix, we use our strategy of counting the number of instability modes, see Figure~\ref{fig2:asDDJ} which is much more reliable (since it is parameter free) and efficient.
In Figure~\ref{fig2:asDDJ}, every area stamped with 0 is a domain where the O3 scheme is stable. 
The odd pattern for very small $\lambda$ (approximatively between $0$ and $0.01$) of Figure~\ref{fig2:asDDJ} may be due to difficulties for computing the winding number. Indeed, for very small values of $\lambda$, the Kreiss-Lopatinskii determinant is really close to the origin and even with a refinement (for more details on this procedure see the next example), the computation of the winding number may become inaccurate, which is not a problem in practice since it would correspond to very small, then unusable, time steps.

\begin{figure}
\begin{center}
\begin{tabular}{cc}
    $\mathcal R^{3,0}$ & $\mathcal R^{3,1}$ \\
    \begin{tikzpicture}
        \node (A) at (0,0) {\includegraphics[trim = 1.5cm 0.7cm 2cm 1.45cm, clip, width = 8cm]{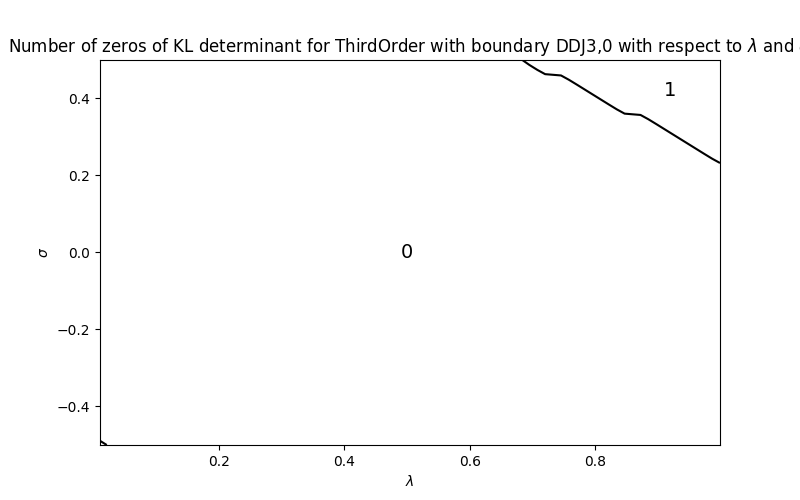}};
        
        \node (s) at (-3.75,2.4) {\scriptsize{$\sigma$}};
        \node (l) at (3.6,-2.4) {\scriptsize{$\lambda$}};
    \end{tikzpicture} 
    & \begin{tikzpicture}
        \node (A) at (0,0) {\includegraphics[trim = 1.5cm 0.7cm 2cm 1.45cm, clip, width = 8cm]{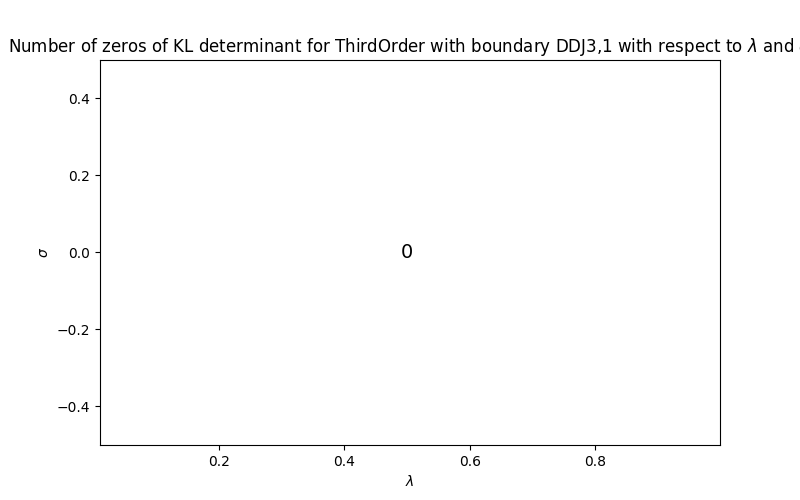}};
        
        \node (s) at (-3.75,2.4) {\scriptsize{$\sigma$}};
        \node (l) at (3.6,-2.4) {\scriptsize{$\lambda$}};
    \end{tikzpicture}\\ 
    $\mathcal R^{4,0}$ & $\mathcal R^{4,1}$ \\
    \begin{tikzpicture}
        \node (A) at (0,0) {\includegraphics[trim = 1.5cm 0.7cm 2cm 1.45cm, clip, width = 8cm]{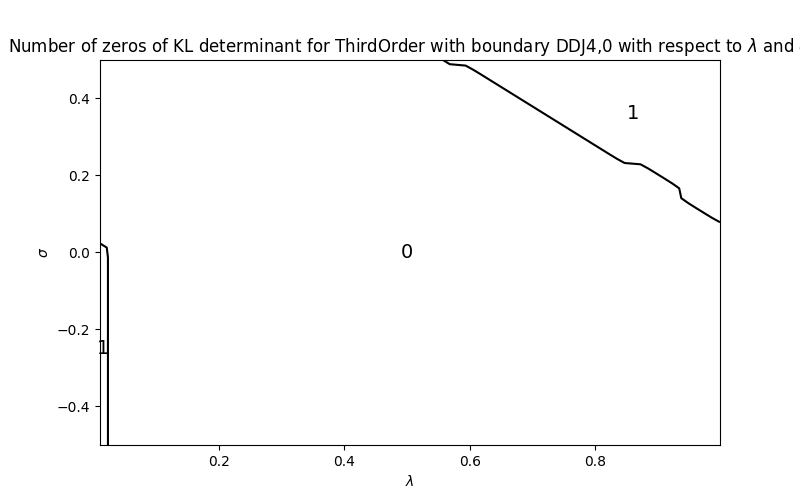}};
        
        \node (s) at (-3.75,2.4) {\scriptsize{$\sigma$}};
        \node (l) at (3.6,-2.4) {\scriptsize{$\lambda$}};
    \end{tikzpicture} 
    & \begin{tikzpicture}
        \node (A) at (0,0) {\includegraphics[trim = 1.5cm 0.7cm 2cm 1.45cm, clip, width = 8cm]{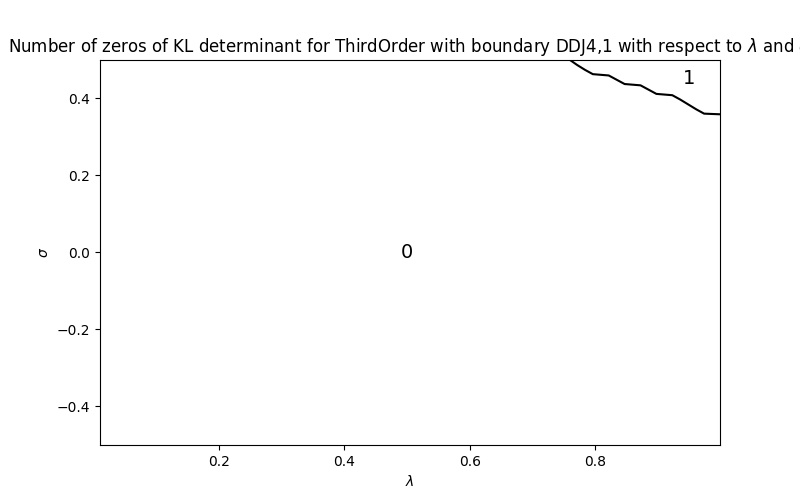}};
        
        \node (s) at (-3.75,2.4) {\scriptsize{$\sigma$}};
        \node (l) at (3.6,-2.4) {\scriptsize{$\lambda$}};
    \end{tikzpicture}

\end{tabular}
\end{center}
\caption{Number of zeros of the Kreiss-Lopatinskii determinant of O3 scheme with different reconstruction boundaries for $\lambda \in ]0,1]$ and $\sigma \in ]-0.5,0.5[$.\label{fig2:asDDJ}}
\end{figure}


\subsection{Example of Lax-Wendroff 5}

The fifth-order Lax-Wendroff scheme, named LW5, has been proposed in~\cite{Lorcher06} and can be written, for all $n \in \N$, for all $j\in \N$, as
\begin{multline}
    U_j^{n+1} = \tfrac{\lambda(\lambda-2)(\lambda-1)(\lambda+1)(\lambda+2)}{120}U_{j-3}^n - \tfrac{\lambda (\lambda-1)(\lambda-3)(\lambda+1)(\lambda+2)}{24}U_{j-2}^n + \tfrac{\lambda(\lambda-2)(\lambda-3)(\lambda+1)(\lambda+2)}{12}U_{j-1}^n \\
    + \left (1-\tfrac{\lambda(\lambda^4 - 3\lambda^3 - 5\lambda^2 + 15\lambda + 4)}{12} \right ) U_j^n + \tfrac{\lambda(\lambda-1)(\lambda-2)(\lambda-3)(\lambda+2)}{24}U_{j+1}^n - \tfrac{\lambda(\lambda-1)(\lambda-2)(\lambda-3)(\lambda+1)}{120} U_{j+2}^n.
\end{multline}
This scheme LW5 is Cauchy-stable for $\lambda \in ]0,1]$.

\medskip

Figure~\ref{fig2:LW5} illustrates the computation of the number of instabilities for LW5 for different reconstruction boundaries where $\sigma = 0.4$ with respect to $\lambda\in ]0,1]$.
As in the previous example, it may happen that the Kreiss-Lopatinskii curve is too close to the origin, the winding number of the origin cannot be computed correctly. Following the geometric algorithm proposed by Garc\'ia Zapata in~\cite{Zapata14}, a refinement of the discretization then improves the effective computation of the winding number. Figure~\ref{fig2:zoomraffinement} represents such refinement with close-up close to the origin. However, even with this strategy, for very small values of $\lambda$, we cannot refine more than the machine's precision,
that is why there is still some odd pattern for very small $\lambda$ in Figure~\ref{fig2:asDDJ}, Figure \ref{fig2:LW5} and Figure~\ref{fig2:LW5sigma}. As we already discussed, such very small time step are however not used in practice.
The stability area with respect to both parameters $\lambda$ and $\sigma$ are drawn in Figure~\ref{fig2:LW5sigma}, considering again successively various reconstruction boundary conditions. 


\begin{figure}
    \centering
    \begin{tikzpicture}
        \node (A) at (0,0) {\includegraphics[trim = 3.55cm 1.15cm 2.75cm 1.75cm, clip, width = 13cm]{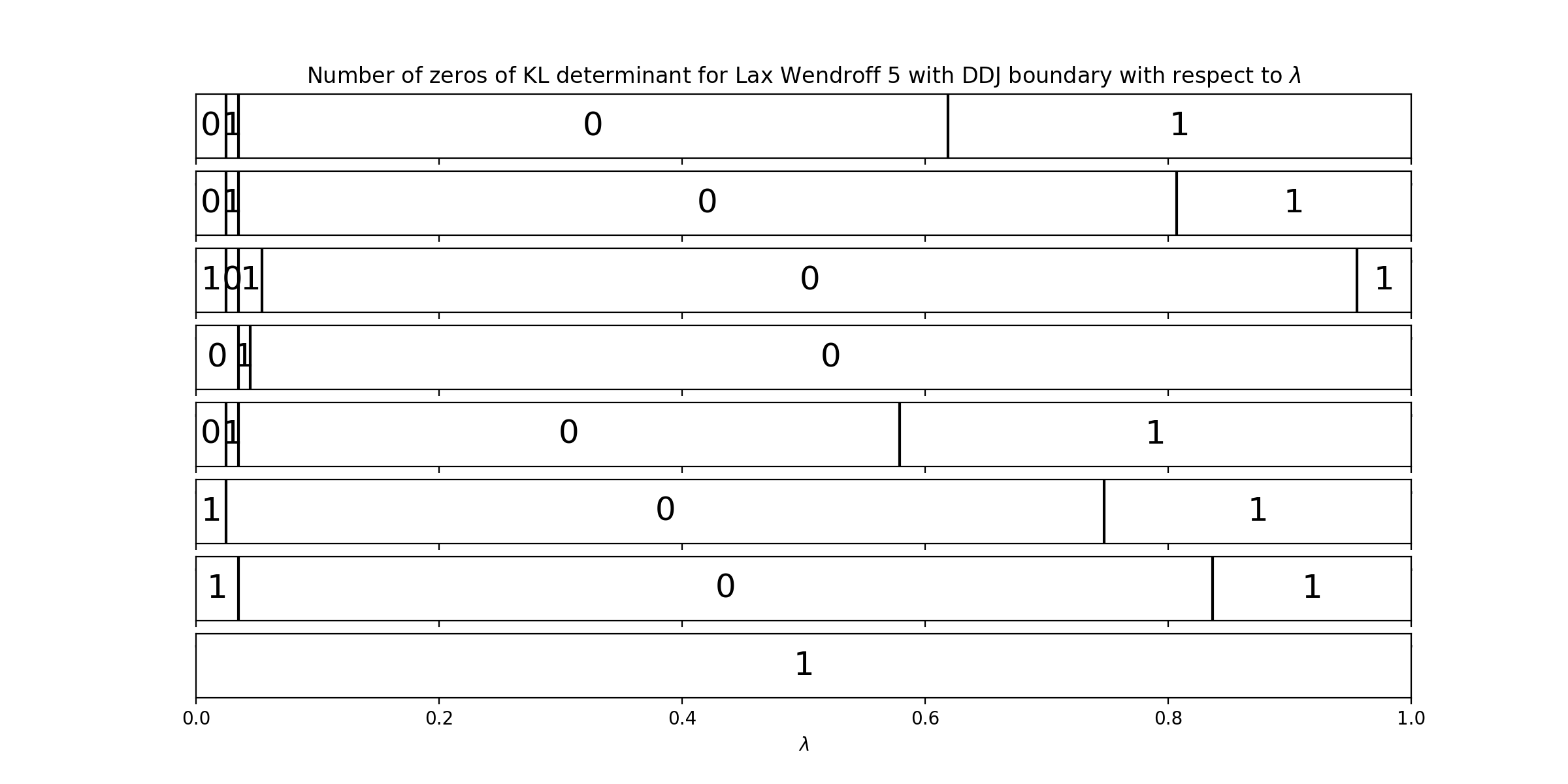}};
        \node (S3ILW4) at (-6.8,2.9) {\scriptsize{$\mathcal R^{5,0}$}};
        \node (S1ILW2) at (-6.8,2.1) {\scriptsize{$\mathcal R^{5,1}$}};
        \node (S2ILW3) at (-6.8,1.3) {\scriptsize{$\mathcal R^{5,2}$}};
        \node (S1ILW3) at (-6.8,0.5) {\scriptsize{$\mathcal R^{5,3}$}};
        \node (S1ILW4) at (-6.8,-0.3) {\scriptsize{$\mathcal R^{6,0}$}};
        \node (S2ILW4) at (-6.8,-1.15) {\scriptsize{$\mathcal R^{6,1}$}};
        \node (S3ILW4) at (-6.8,-1.9) {\scriptsize{$\mathcal R^{6,2}$}};
        \node (S2ILW4) at (-6.8,-2.7) {\scriptsize{$\mathcal R^{6,3}$}};
        \node (l) at (0,-3.4) {\scriptsize{$\lambda$}};
    \end{tikzpicture}

    \caption{Number of zeros of the Kreiss-Lopatinskii determinant of LW5 scheme with different reconstruction boundaries for $\lambda \in ]0,1]$ and $\sigma=0.4$.\label{fig2:LW5}}
\end{figure}


\begin{figure}
    \begin{center}
    \begin{tabular}{cc}
        \begin{tikzpicture}
            \node (A) at (0,0) {\includegraphics[trim = 1.2cm 0.5cm 1.2cm 1.2cm, clip, width = 7cm]{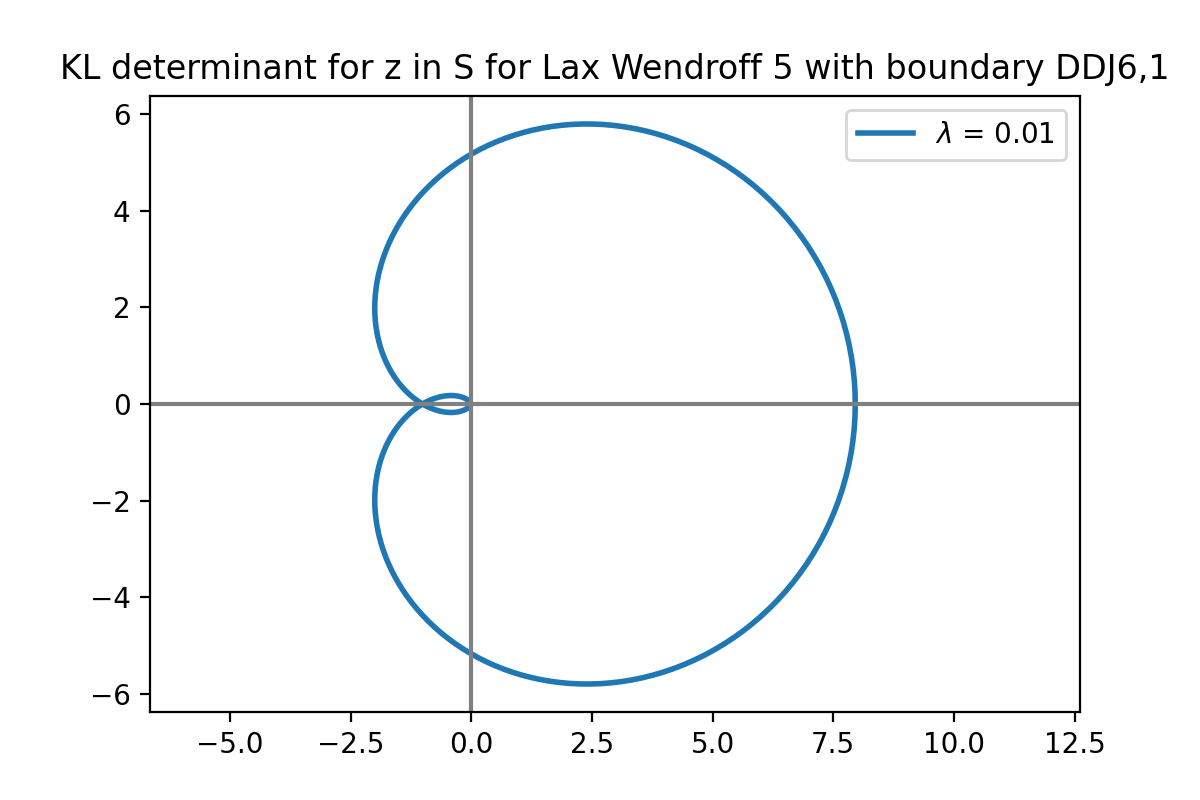}};
            \node (l) at (0,-2.9) {\footnotesize{$(a)$}};
            
        \end{tikzpicture} 
        & \begin{tikzpicture}
            \node (A) at (0,0) {\includegraphics[trim = 1cm 0.5cm 1.2cm 0.8cm, clip, width = 7cm]{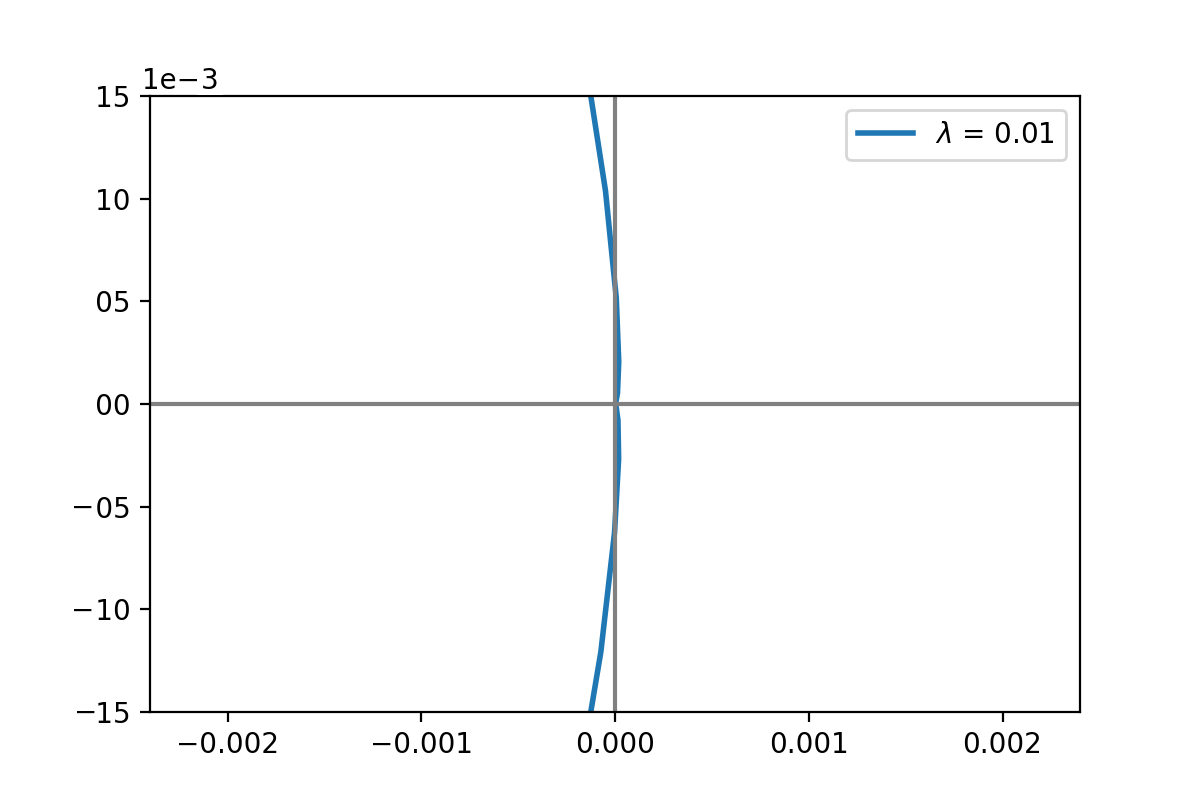}};
            \node (l) at (0,-2.9) {\footnotesize{$(b)$}};
            
        \end{tikzpicture}\\
        \begin{tikzpicture}
            \node (A) at (0,0) {\includegraphics[trim = 1.2cm 0.1cm 1.2cm 0.8cm, clip, width = 7cm]{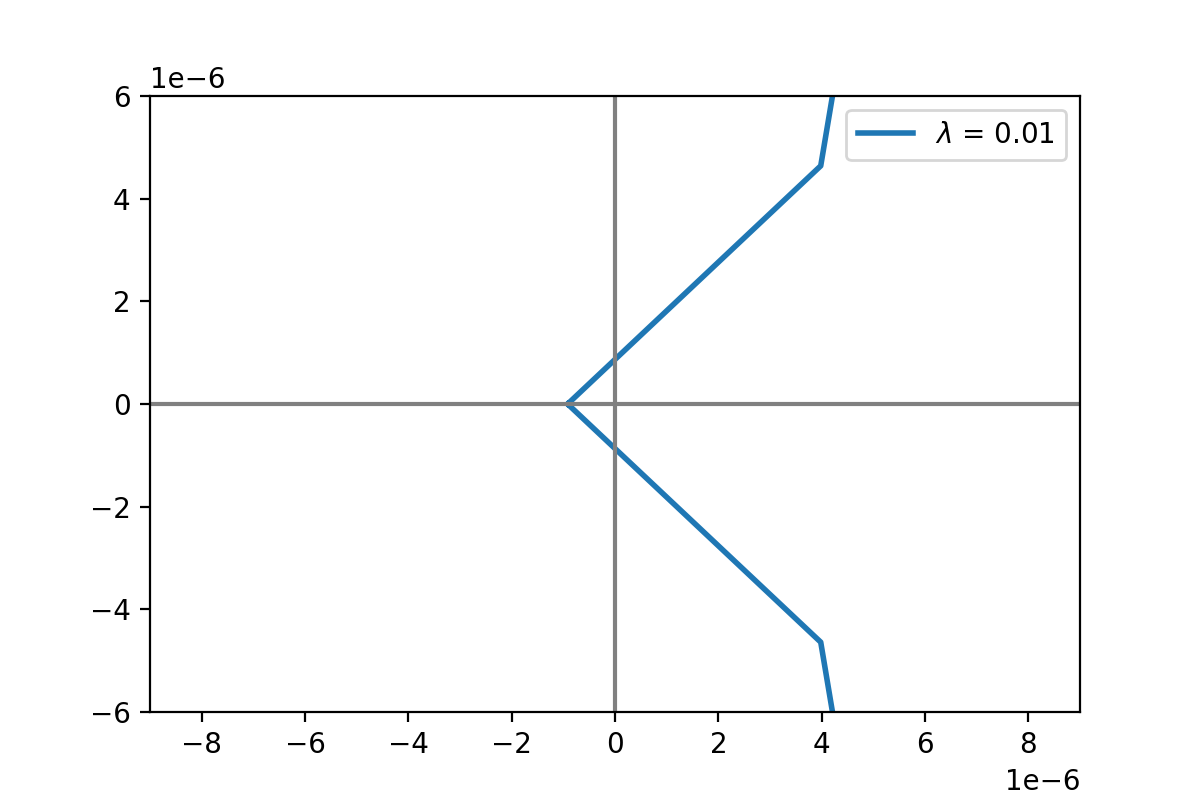}};
            
            \node (l) at (0,-2.9) {\footnotesize{$(c)$}};
        \end{tikzpicture} 
        & \begin{tikzpicture}
            \node (A) at (0,0) {\includegraphics[trim = 1.2cm 0.1cm 1.2cm 0.8cm, clip, width = 7cm]{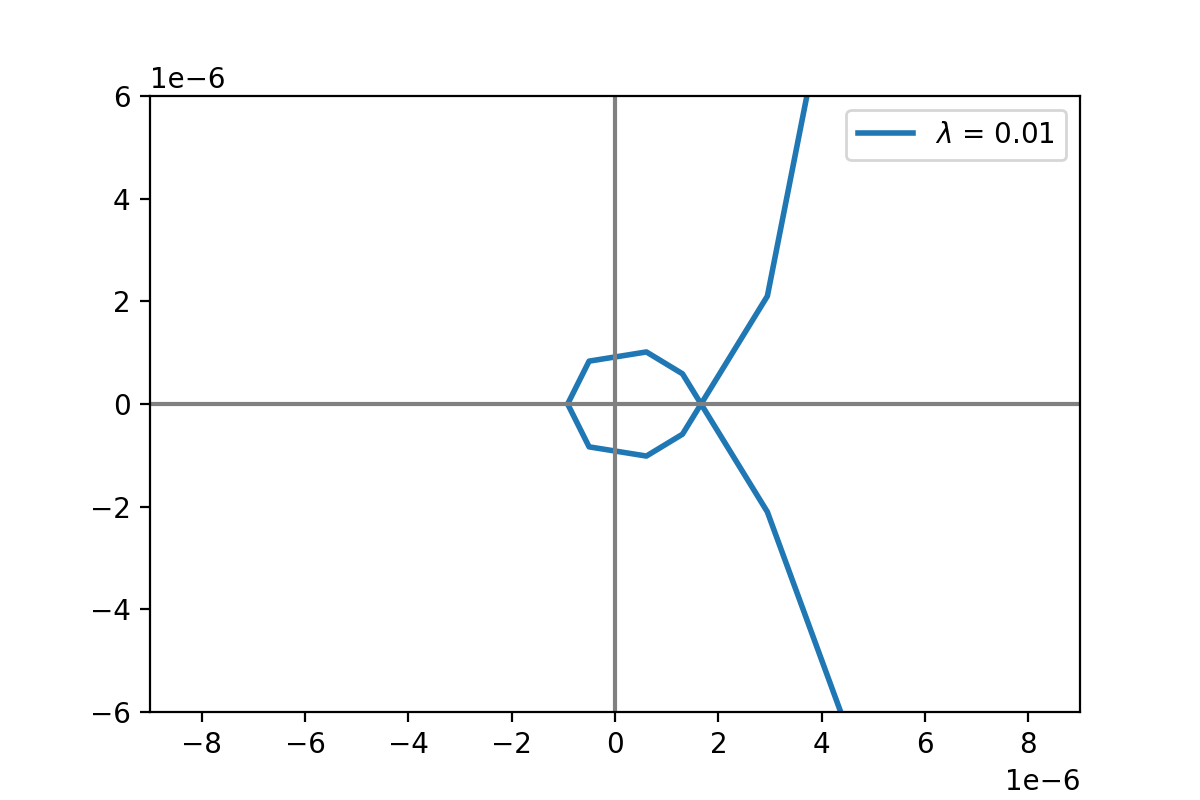}};
            
            \node (l) at (0,-2.9) {\footnotesize{$(d)$}};
        \end{tikzpicture}
    \end{tabular}
    \end{center}
    \caption{$(a)$ representation of $\DKLindep(\S)$ for LW5 with the boundary condition $\mathcal R^{6,1}$, for $\lambda = 0.01$ and $\sigma = 0$, zoom $(b)$ without refinement $(c)$ and with refinement $(d)$.\label{fig2:zoomraffinement}}
    \end{figure}

\begin{figure}
    \begin{center}
    \begin{tabular}{cc}
        \begin{tikzpicture}
            \node (A) at (0,0) {\includegraphics[trim = 1.5cm 0.7cm 2cm 1.45cm, clip, width = 8cm]{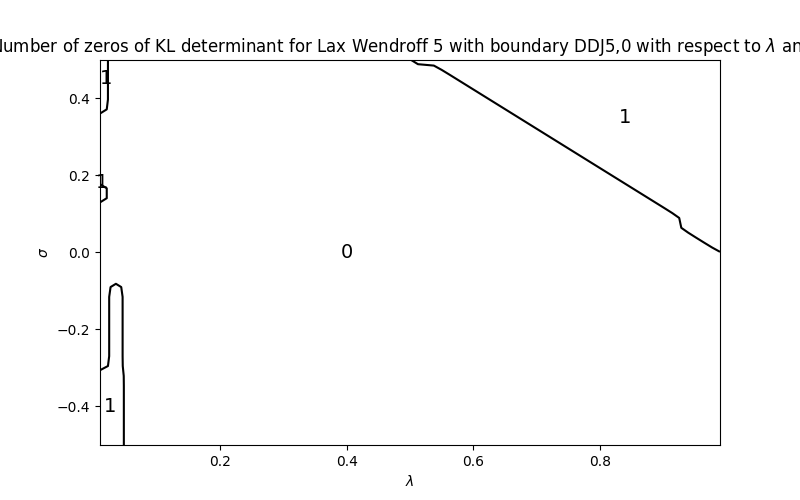}};
            
            \node (s) at (-3.75,2.4) {\scriptsize{$\sigma$}};
            \node (l) at (3.6,-2.4) {\scriptsize{$\lambda$}};
            \node (l) at (3.45,-1.7) {$\mathcal R^{5,0}$};
        \end{tikzpicture} & \begin{tikzpicture}
            \node (A) at (0,0) {\includegraphics[trim = 1.5cm 0.7cm 2cm 1.45cm, clip, width = 8cm]{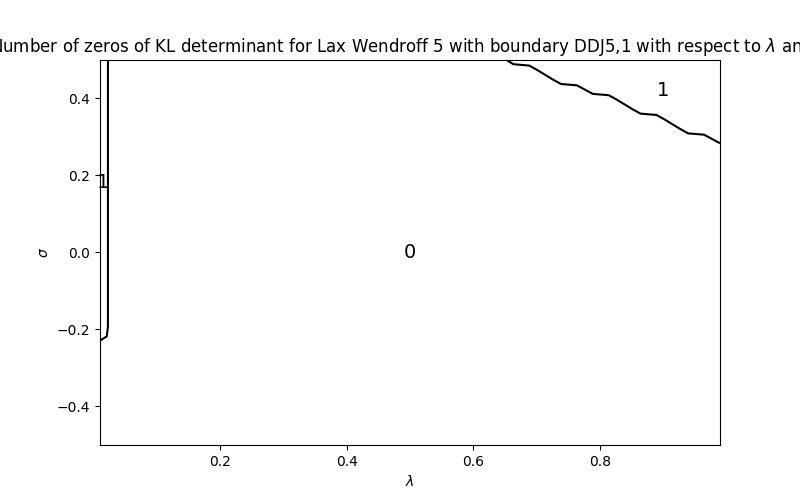}};
            
            \node (s) at (-3.75,2.4) {\scriptsize{$\sigma$}};
            \node (l) at (3.6,-2.4) {\scriptsize{$\lambda$}};
            \node (l) at (3.45,-1.7) {$\mathcal R^{5,1}$};
        \end{tikzpicture} \\ 
        \begin{tikzpicture}
            \node (A) at (0,0) {\includegraphics[trim = 1.5cm 0.7cm 2cm 1.45cm, clip, width = 8cm]{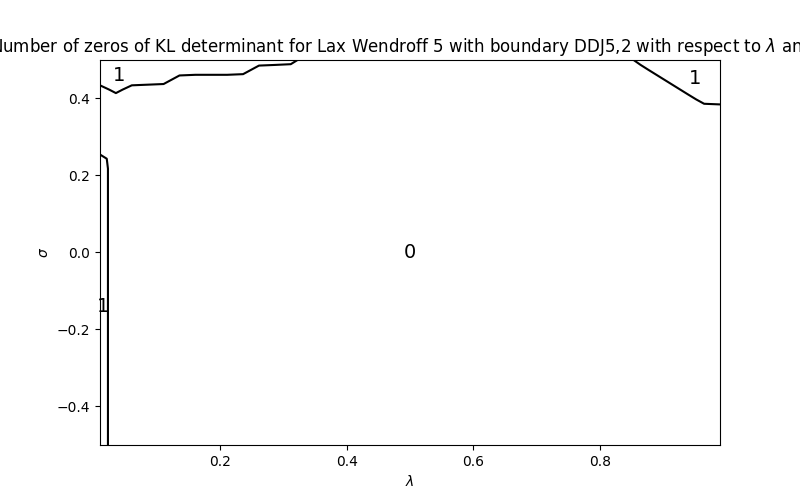}};
            
            \node (s) at (-3.75,2.4) {\scriptsize{$\sigma$}};
            \node (l) at (3.6,-2.4) {\scriptsize{$\lambda$}};
            \node (l) at (3.45,-1.7) {$\mathcal R^{5,2}$};
        \end{tikzpicture} & \begin{tikzpicture}
            \node (A) at (0,0) {\includegraphics[trim = 1.5cm 0.7cm 2cm 1.45cm, clip, width = 8cm]{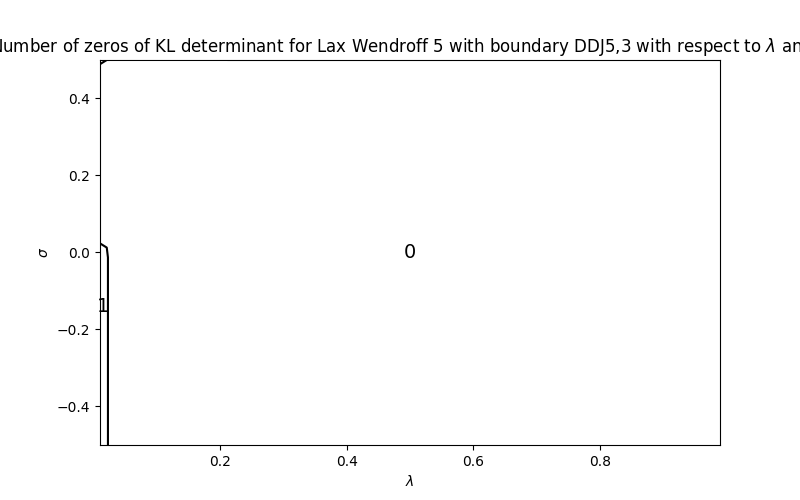}};
            
            \node (s) at (-3.75,2.4) {\scriptsize{$\sigma$}};
            \node (l) at (3.6,-2.4) {\scriptsize{$\lambda$}};
            \node (l) at (3.45,-1.7) {$\mathcal R^{5,3}$};
        \end{tikzpicture} \\ 
        \begin{tikzpicture}
            \node (A) at (0,0) {\includegraphics[trim = 1.5cm 0.7cm 2cm 1.45cm, clip, width = 8cm]{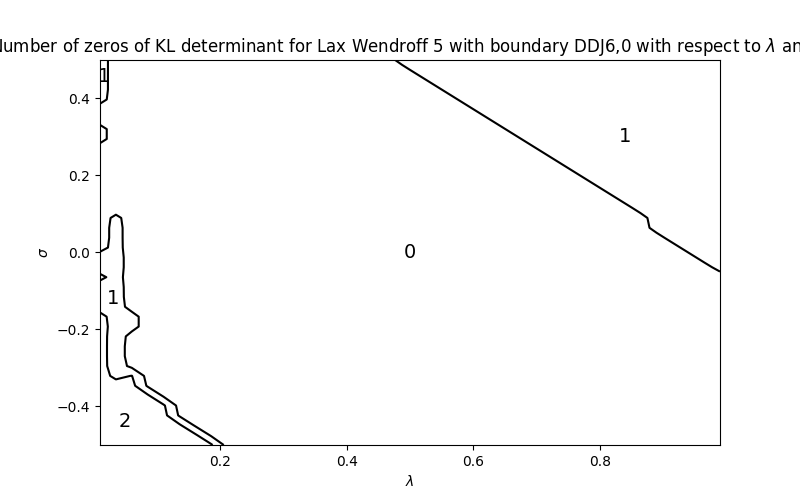}};
            
            \node (s) at (-3.75,2.4) {\scriptsize{$\sigma$}};
            \node (l) at (3.6,-2.4) {\scriptsize{$\lambda$}};
            \node (l) at (3.45,-1.7) {$\mathcal R^{6,0}$};
        \end{tikzpicture} & \begin{tikzpicture}
            \node (A) at (0,0) {\includegraphics[trim = 1.5cm 0.7cm 2cm 1.45cm, clip, width = 8cm]{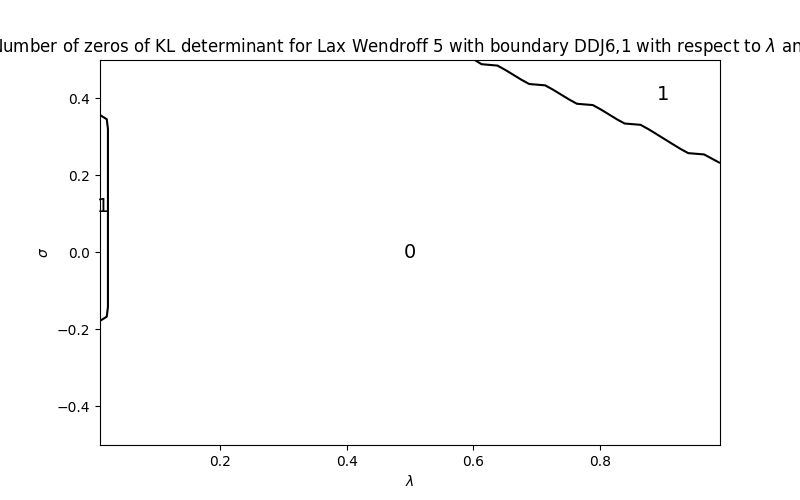}};
            
            \node (s) at (-3.75,2.4) {\scriptsize{$\sigma$}};
            \node (l) at (3.6,-2.4) {\scriptsize{$\lambda$}};
            \node (l) at (3.45,-1.7) {$\mathcal R^{6,1}$};
        \end{tikzpicture} \\ 
        \begin{tikzpicture}
            \node (A) at (0,0) {\includegraphics[trim = 1.5cm 0.7cm 2cm 1.45cm, clip, width = 8cm]{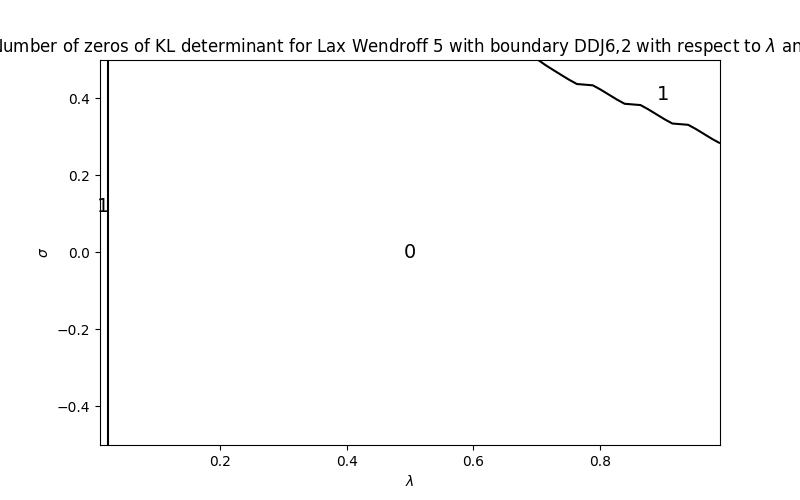}};
            
            \node (s) at (-3.75,2.4) {\scriptsize{$\sigma$}};
            \node (l) at (3.6,-2.4) {\scriptsize{$\lambda$}};
            \node (l) at (3.45,-1.7) {$\mathcal R^{6,2}$};
        \end{tikzpicture} & \begin{tikzpicture}
            \node (A) at (0,0) {\includegraphics[trim = 1.5cm 0.7cm 2cm 1.45cm, clip, width = 8cm]{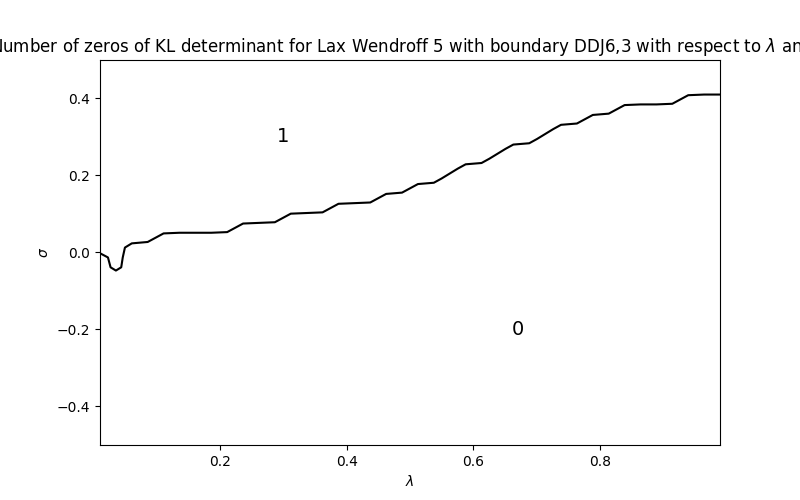}};
            
            \node (s) at (-3.75,2.4) {\scriptsize{$\sigma$}};
            \node (l) at (3.6,-2.4) {\scriptsize{$\lambda$}};
            \node (l) at (3.45,-1.7) {$\mathcal R^{6,3}$};
        \end{tikzpicture}
    \end{tabular}
    \end{center}
    \caption{Number of zeros of the Kreiss-Lopatinskii determinant of LW5 scheme with different reconstruction boundaries for $\lambda \in ]0,1]$ and $\sigma \in ]-0.5,0.5[$.\label{fig2:LW5sigma}}
    \end{figure}

    \bigskip

All the figures can be easily computed in Python with the common  NumPy~\cite{Numpy20} library and the SymPy~\cite{Sympy17} library for the computer algebra system. The algorithm is really efficient. For each subfigure of Figure~\ref{fig2:asDDJ}, the 1600 runs takes less than a couple of minutes of computation achieved on a standard laptop. Moreover, our procedure provides sharp results. In particular, contrary to numerical investigations of stability, which are based on the computation of the spectral radius, no arbitrary truncation of (quasi-)Toeplitz matrices is needed.

\newpage
\bibliographystyle{abbrv}
\bibliography{biblio}


\end{document}